\title{The Modal Logic of Provability and Forcing}
\author{Taishi Kurahashi\footnote{Email: kurahashi@people.kobe-u.ac.jp}
and Rihito Takase\footnote{Email: 206X601X@stu.kobe-u.ac.jp}
\footnote{Graduate School of System Informatics, Kobe University, 1-1 Rokkodai, Nada, Kobe 657-8501, Japan.}}
\date{}
\theoremstyle{plain}
\newtheorem{thm}{Theorem}[section]
\newtheorem*{thm*}{Theorem}
\newtheorem{lem}[thm]{Lemma}
\newtheorem{prop}[thm]{Proposition}
\newtheorem{cor}[thm]{Corollary}
\newtheorem*{fact*}{Fact}
\newtheorem*{prob*}{Problem}
\newtheorem{cl}{Claim}
\newtheorem*{cl*}{Claim}
\newtheorem*{scl*}{Subclaim}
\theoremstyle{definition}
\newtheorem{defn}[thm]{Definition}
\newcommand{\ZFC}{\mathrm{ZFC}}
\newcommand{\PF}{\mathbf{PF}}
\newcommand{\PFo}{\mathbf{PF}^\omega}
\newcommand{\GL}{\mathbf{GL}}
\newcommand{\SF}{\mathbf{S4.2}}
\newcommand{\GT}{\GL \otimes \mathbf{Triv}}
\newcommand{\CU}{\mathrm{L}}
\newcommand{\accp}{\sqsubset}
\newcommand{\accf}{\preccurlyeq}
\newcommand{\forces}{\mathrel{\Vdash \hspace{-6.5pt} \Vdash}}
\newcommand{\Pforces}[1]{\mathrel{\forces_{#1}}}
\newcommand{\p}{\mathsf{p}}
\newcommand{\f}{\mathsf{f}}
\newcommand{\pf}{\mathsf{pf}}
\newcommand{\Sub}{\mathrm{Sub}}
\newcommand{\Subb}{\overline{\Sub}}
\newcommand{\Th}{\mathrm{Th}}
\newcommand{\mc}{\mathcal}
\newcommand{\ul}{\underline}
\newcommand{\BP}{\Box_{\mathsf{p}}}
\newcommand{\DP}{\Diamond_{\mathsf{p}}}
\newcommand{\BF}{\Box_{\mathsf{f}}}
\newcommand{\DF}{\Diamond_{\mathsf{f}}}
\newcommand{\Prov}{\textcolor[gray]{0.7}{\blacksquare}_{\mathsf p}}
\newcommand{\Con}{\textcolor[gray]{0.7}{\blacklozenge}_{\mathsf p}}
\newcommand{\ForceB}{\textcolor[gray]{0.7}{\blacksquare}_{\mathsf f}}
\newcommand{\ForceD}{\textcolor[gray]{0.7}{\blacklozenge}_{\mathsf f}}
\newcommand{\LS}{\mc L_{\in}}
\newcommand{\LP}{\mc L_{\p}}
\newcommand{\LF}{\mc L_{\f}}
\newcommand{\LPF}{\mc L_{\pf}}
\newcommand{\gp}{g_{\p}}
\newcommand{\gf}{g_{\f}}
\newcommand{\gpf}{g_{\pf}}
\begin{document}

\maketitle

\begin{abstract}
    Solovay's arithmetical completeness theorem states that the modal logic of provability coincides with the modal logic $\GL$.
    Hamkins and L\"owe studied the modal logical aspects of set theoretic multiverse and proved that the modal logic of forcing is exactly the modal logic $\SF$.
    We explore the interaction between the notions of provability and forcing in terms of modal logic. 
    We introduce the bimodal logic $\PF$ and prove that the modal logic of provability and forcing is exactly $\PF$. 
    We also introduce the bimodal logic $\PFo$ and prove that $\PFo$ is exactly the modal logic of provability and forcing true in $\omega$-models of set theory. 
 \end{abstract}

\section{Introduction}

Set theorists have been interested in and studied the structure of models (universes) of the set theory $\ZFC$. 
If $\ZFC$ is consistent, then there exist numerous universes with different properties, and their overall structure, namely the multiverse, has been investigated in recent years \cite{Ham12}. 
In the study of the multiverse, there has been interest in the nature of relational structures on the multiverse. 
A typical example of such a structure is one based on forcing extension. 

Hamkins and L\"owe \cite{HL08} studied this relational structure by analyzing the behavior on $\ZFC$ of the modal operator $\BF$, where $\BF \varphi$ means ``$\varphi$ is true in all forcing extensions''. 
Here, the subscript $\mathsf{f}$ stands for `forcing'. 
Hamkins and L\"owe called the set of all $\ZFC$-verifiable $\BF$-principles the \textit{modal logic of forcing}, and then proved that it is exactly the modal logic $\SF$ whenever $\ZFC$ is consistent. 

Another typical example of a method of obtaining models is one based on completeness theorem. 
The modal operator $\BP$ corresponding to this case is the $\ZFC$-provability, where the subscript $\mathsf{p}$ stands for `provability'. 
The set of all $\ZFC$-verifiable $\BP$-principles is known as the \textit{provability logic of $\ZFC$} (cf.~\cite{Boo93,Smo85}). 
It is then well-known as Solovay's arithmetical completeness theorem \cite{Sol76} that the provability logic of $\ZFC$ is exactly the modal logic $\GL$ whenever $\ZFC$ is $\Sigma_1$-sound in the sense of arithmetic. 

Hamkins and L\"owe refer to Solovay's work at the beginning of their paper, however, the modal logical studies of $\BP$ and $\BF$ are currently placed in different contexts. 
Nevertheless, forceability and provability are not irrelevant. 
For example, the following fact is stated in usual textbooks of set theory: For any sentence $\varphi$ of set theory, if $\varphi$ is provable in $\ZFC$, then ``$\varphi$ is true in all forcing extensions'' is also provable in $\ZFC$ (cf.~\cite{Jec03,Kun80}). 
By using the modal operators $\BP$ and $\BF$, this fact is expressed as $\BP \varphi \to \BP \BF \varphi$. 
Our main goal in this paper is to investigate this kind of interaction between these two operators, that is, the $\ZFC$-provable $(\BP, \BF)$-principles. We then call the set of all such principles the \textit{modal logic of provability and forcing}. 
Through this investigation, we aim to clarify the relationship between forceability and provability.

For this purpose, we introduce the bimodal logic $\PF$ in the language having two modal operators $\BP$ and $\BF$. 
The modal axioms of $\PF$ are those of $\GL$ for $\BP$, those of $\SF$ for $\BF$, and the following three new interaction axioms: 
\begin{itemize}
    \item $\BP A \to \BF \BP A$
    \item $\DP A \to \BF \DP A$
    \item $\BP A \to \BP \BF A$
\end{itemize}
Our main theorem is the following: 

\begin{thm*}[Main Theorem]
If $\ZFC$ is $\Sigma_1$-sound in the sense of arithmetic, then the modal logic of provability and forcing is exactly the modal logic $\PF$. 
\end{thm*}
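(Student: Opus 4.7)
The proof splits into soundness (every theorem of $\PF$ is a $\ZFC$-provable $(\BP,\BF)$-principle under any realisation) and arithmetical completeness. For soundness I would check each axiom individually. The $\GL$-axioms for $\BP$ come from the classical provability logic of $\ZFC$, the $\SF$-axioms for $\BF$ are exactly the $\ZFC$-provable forcing principles established by Hamkins and L\"owe, and the three interaction axioms reduce to well-known absoluteness facts. Specifically, $\BP A \to \BF \BP A$ and $\DP A \to \BF \DP A$ follow from the $\Sigma_1$-absoluteness of provability and of consistency under forcing (both provable in $\ZFC$ for an arbitrary forcing notion), while $\BP A \to \BP \BF A$ is the textbook observation that $\ZFC$ internally verifies that any $\ZFC$-proof of $\varphi$ lifts through the forcing translation to a proof that $\varphi$ is forced by the trivial condition.

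For completeness, I would first establish Kripke completeness of $\PF$, preferably with the finite model property, over the class of finite frames $(W, R_{\p}, R_{\f})$ in which $R_{\p}$ is transitive and converse well-founded, $R_{\f}$ is a directed preorder, and the three interaction conditions
\[
  R_{\f}\circ R_{\p}\subseteq R_{\p},\qquad R_{\f}^{-1}\circ R_{\p}\subseteq R_{\p},\qquad R_{\p}\circ R_{\f}\subseteq R_{\p}
\]
hold, corresponding one-to-one to the three new axioms. Soundness over this class is a direct frame-condition check, and completeness should be obtainable via a canonical-model plus filtration argument along standard bimodal lines; this is the easy half of the technical work.

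Given $A \notin \PF$, fix a finite counter-model refuting $A$ at a root $r$. The main step is a Solovay-style arithmetisation: assign to each $w \in W$ a set-theoretic sentence $S_w$ such that $S_r$ holds in $V$, the $S_w$ are pairwise exclusive, and, under the realisation $p^{*} := \bigvee\{S_w : w \Vdash p\}$, $\ZFC$ proves $S_w \to B^{*}$ when $w \models B$ and $S_w \to \lnot B^{*}$ otherwise, for every subformula $B$ of $A$. Each $S_w$ must simultaneously encode the $R_{\p}$-transitions, in the Solovay style via a self-referential definition tracking a non-standard descent through the Kripke model, and the $R_{\f}$-transitions, in the Hamkins-L\"owe style via a class-forcing iteration with buttons and switches realising the prescribed lattice of forcing extensions.

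The main obstacle is fusing these two a priori orthogonal constructions: Solovay assigns a non-standard state by arithmetical self-reference and controls only provability, whereas Hamkins and L\"owe work entirely inside a single universe and control only forceability. The interaction axioms are exactly what makes the fusion possible. From $R_{\f}\circ R_{\p}\subseteq R_{\p}$ and $R_{\f}^{-1}\circ R_{\p}\subseteq R_{\p}$ one sees that $R_{\f}$-related worlds share the same $R_{\p}$-image, so the Solovay self-reference can be arranged to depend only on the $R_{\p}$-equivalence class of the intended world, and the Hamkins-L\"owe code can be embedded inside each such class without disturbing the Solovay data. The delicate steps will be (a) arranging the Hamkins-L\"owe forcing to be absolute under the Solovay self-reference, so that moving to a forcing extension does not shift the computed state; (b) ensuring, via $R_{\p}\circ R_{\f}\subseteq R_{\p}$, that every forcing extension of the actual universe already satisfies some $S_u$ with $r R_{\f} u$; and (c) invoking $\Sigma_1$-soundness at the end, exactly as in Solovay's original argument, to conclude that the state self-referentially chosen in $V$ is the intended root $r$.
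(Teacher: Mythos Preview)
Your proposal is correct and follows essentially the same route as the paper: soundness by axiom-checking, a finite-frame theorem for $\PF$, and then a fusion of Solovay's construction on the $R_{\p}$-quotient with the Hamkins--L\"owe construction inside each $R_{\f}$-cluster, the sentences $S_w$ being literally the conjunctions $\lambda_i \wedge \chi_a^i$ of a Solovay sentence for the cluster $i$ and an HL sentence for the node $a\in i$. The only points you underestimate are that the Kripke completeness must be pushed to frames whose clusters are pre-Boolean algebras (so that HL sentences exist) and satisfy the extra ``nice'' condition making $R_{\p}$ well-defined on clusters as targets, and that the glue between the two layers in the $\DP$-case is the lemma $\ZFC\vdash \Con\varphi \to \Con(\varphi\wedge\chi_a)$ for arithmetical $\varphi$, proved via passing to $\CU$ and using $\ZFC+\mathrm{V}=\CU\vdash\ForceD\chi_a$.
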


Our proof strategy of the main theorem is to combine the methods of Solovay and Hamkins--L\"owe. 
For this, we first explore the Kripke semantics of $\PF$. 
Kripke frames for $\PF$ have two accessibility relations $\accp$ and $\accf$ respectively corresponding to the modal operators $\BP$ and $\BF$, and are two-layered according to each of these relations. 
Roughly speaking, we say that a Kripke frame $\mc F = (W, \accp, \accf)$ is a \textit{nice $\PF$-frame} iff $W$ is a disjoint union of Kripke frames of $\SF$ with respect to $\accf$, and $\accp$ is thought as a $\GL$-accessibility relation between these $\SF$-frames. 
Actually, $\accp$ is a relation on $W$, and the above definition means that for any $\SF$-frames $\mc C_0$, $\mc C_1 \subseteq W$ and any $u, u' \in \mc C_0$ and $v, v' \in \mc C_1$, we have that $u \accp v$ if and only if $u' \accp v'$. 
Our nice $\PF$-frames resemble to stratified frames of polymodal logic $\mathbf{J}$ of provability which are presented in \cite{Bek10}. 
We then establish the following theorem: 
\begin{thm*}[The finite frame property of $\PF$]
The logic $\PF$ is sound and complete with respect to the class of all finite nice $\PF$-frames. 
\end{thm*}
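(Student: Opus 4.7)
I would verify that every axiom of $\PF$ is valid on any nice $\PF$-frame $\mc F = (W, \accp, \accf)$. The $\SF$-axioms for $\BF$ hold because each $\accf$-connected component of $W$ is an $\SF$-frame by definition. For the $\GL$-axioms for $\BP$, cluster-invariance lets us lift $\accp$ to a relation on the set of $\accf$-clusters, and the nice-frame condition makes this quotient a $\GL$-frame (transitive and conversely well-founded), which suffices to validate the $\GL$-axioms pointwise. The three interaction axioms are the key cases: if $u \models \BP A$ and $u \accf u'$, then $u$ and $u'$ share an $\accf$-cluster and cluster-invariance gives $u' \models \BP A$, proving $\BP A \to \BF \BP A$; the dual $\DP A \to \BF \DP A$ is symmetric. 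For $\BP A \to \BP \BF A$, suppose $u \models \BP A$, $u \accp v$, and $v \accf v'$; then $v, v'$ share an $\accf$-cluster, hence $u \accp v'$ by cluster-invariance, so $v' \models A$, giving $v \models \BF A$.

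\textbf{Completeness strategy.} Fix a formula $A$ with $\PF \nvdash A$. The plan is to combine the Segerberg-style finite-frame proof of $\GL$ with the Hamkins--L\"owe filtration proof of $\SF$. Let $\Sigma$ be the closure of $\{A\}$ under subformulas, single negation, and the prefixes $\BP, \DP, \BF, \DF$ applied to its members. Pick a maximal $\PF$-consistent set $\Gamma_0 \ni \neg A$ and work in the canonical model $\mc M^c$. Define a $\Sigma$-filtration and, on its quotient, two relations: $[x] \accf [y]$ iff for every $\BF B \in \Sigma$, $x \models \BF B$ implies $y \models B$ and $y \models \BF B$; and $[x] \accp [y]$ iff (i) for every $\BP B \in \Sigma$, $x \models \BP B$ implies $y \models B$ and $y \models \BP B$, and (ii) there exists some $\BP C \in \Sigma$ with $y \models \BP C$ but $x \not\models \BP C$. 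Clause (ii) is the familiar $\GL$-style strict-increase condition, ensuring that $\accp$ is irreflexive, transitive, and, since $\Sigma$ is finite, conversely well-founded. A truth lemma on $\Sigma$ then shows that $[\Gamma_0]$ still refutes $A$ in this finite model.

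\textbf{Main obstacle.} The delicate step is arranging matters so that the resulting finite frame is actually nice. Cluster-invariance of $\accp$ along $\accf$ should be forced by the two interaction axioms $\BP A \to \BF \BP A$ and $\DP A \to \BF \DP A$: together they make the set $\{\BP B \in \Sigma : x \models \BP B\}$ constant along $\accf$-steps in $\mc M^c$, and hence along $\accf$-clusters in the filtration. Directedness of $\accf$ inside each cluster can then be obtained along the lines of Hamkins--L\"owe, for instance by collapsing $\accf$-indistinguishable points within a cluster or by adjoining a greatest element cluster by cluster --- a move that the $\SF$-axioms make safe for the truth lemma. The main technical hurdle I expect is harmonising the $\GL$-style selective filtration for $\accp$ with the $\SF$-style standard filtration for $\accf$, so that the truth lemma survives for formulas with arbitrarily deep nesting of $\BP$ and $\BF$; in particular one must check that every $\accp$-step respects the cluster structure on both ends, which is precisely where the third interaction axiom $\BP A \to \BP \BF A$ enters. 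Once both properties are secured, partitioning $W$ into its $\accf$-clusters yields the desired finite nice $\PF$-frame refuting $A$.
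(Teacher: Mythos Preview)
Your strategy is in the right spirit, but there is a concrete gap in the $\accp$ clause, and the paper's construction is structurally different in a way that sidesteps the difficulties you flag.

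The gap: with your definition of $[x]\accp[z]$ you only secure $z\models B$ and $z\models\BP B$ whenever $x\models\BP B$. Niceness requires that $x\accp z$ and $y\accf z$ imply $x\accp y$; but from $z\models B$ alone you cannot conclude $y\models B$, so your $\accp$ need not be cluster-invariant on the target side. The fix is to demand $z\models\BF B$ (not $z\models B$) on the successor, which is exactly why the paper enlarges the subformula set by $\BF B,\neg\BF B$ for each $\BP B\in\Sub(A)$ and why $\BP A\to\BP\BF A$ is needed in the existence half of the $\BP$-truth lemma. Separately, your $\Sigma$ as literally described (closure under prefixing by all four modalities) is infinite; and upward directedness of $\accf$ is not a free consequence of a transitive filtration---``collapsing points'' or ``adjoining a top'' will in general disturb the truth lemma for mixed $\BP/\BF$ formulas.

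The paper avoids all of this by a two-level construction rather than a single filtration. Worlds are pairs $([x],y)$: here $x$ is a full maximally $\PF$-consistent set, $[x]$ is its class under a finite-index equivalence $\equiv$ recording agreement both on $\Subb(A)$ and on which sentences $\DF\bigwedge y$ hold (for $y$ ranging over $A$-maximal subsets of $\Subb(A)$), and $y$ is any such $A$-maximal set with $\DF\bigwedge y\in x$. Truth is read off $y$; the relation $\accp$ is defined using only $x,x'$; and $\accf$ forces $[x]=[x']$ together with a condition on the $\BF$-content of $y,y'$. Thus $\accp$ is \emph{literally} a relation between $\equiv$-classes, so niceness is automatic, and the extra $\DF\bigwedge y$ clause built into $\equiv$ is what powers the directedness argument via the $\mathbf{.2}$-axiom $\DF\BF D\to\BF\DF D$. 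Your one-level filtration could perhaps be repaired along the lines above, but the paper's pair construction is what makes the verification clean.
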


We further prove that $\PF$ is characterized by a class of finite nice $\PF$-frames that are in some sense well-formed (see Theorem \ref{ffp_PF} for details). 
Our proof of Main Theorem is done by embedding such well-formed finite nice $\PF$-frames into $\ZFC$ by applying Solovay's and Hamkins--L\"owe's proof methods to each layer of such frames, respectively. 
The finite frame property of $\PF$ seems interesting on its own, not just as an intermediate step to our purpose. Furthermore, we feel that the two-layered structure of nice $\PF$-frames also provides deep insight into the relationship between forceability and provability in the relational structure of the multiverse.

We say a model $M$ of $\ZFC$ an \textit{$\omega$-model} iff $\omega^M$ is standard. 
Solovay \cite{Sol76} also studied the \textit{truth provability logic} that is the set of all $\BP$-principles true in all $\omega$-models. 
Solovay introduced the logic $\mathbf{S}$ that is a non-normal extension of $\GL$, and proved that the truth provability logic is exactly $\mathbf{S}$ (see also \cite{Boo93}). 
Inspired by this work, we also introduce the logic $\PFo$ whose axioms are all theorems of $\PF$ and formulas of the form $\BP A \to \BF A$ and whose sole rule is modus ponens. 
We then prove the following theorem concerning $\PFo$: 

\begin{thm*}
If an $\omega$-model of $\ZFC$ exists, then the $(\BP, \BF)$-principles true in all $\omega$-models are exactly the theorems of $\PFo$. 
\end{thm*}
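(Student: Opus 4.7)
Let $M$ be any $\omega$-model of $\ZFC$ and $*$ an arithmetical realization. Every $\PF$-theorem is provable in $\ZFC$ and hence true in $M$. For the new axiom $\BP A \to \BF A$, suppose $M \models (\BP A)^*$; since $\omega^M$ is standard, $\ZFC \vdash A^*$, and then the $\PF$-theorem $\BP A \to \BP \BF A$ gives $\ZFC \vdash \BF A^*$, so $M \models (\BF A)^*$. As modus ponens preserves truth, every theorem of $\PFo$ is true in every $\omega$-model of $\ZFC$.

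\textbf{Completeness: refuting frame.} Because $\PFo$ admits only modus ponens as a rule, the propositional deduction theorem yields the characterization
\begin{equation*}
A \in \PFo \iff \PF \vdash \textstyle\bigwedge_{i=1}^{n} (\BP B_i \to \BF B_i) \to A \text{ for some formulas } B_1,\dots,B_n.
\end{equation*}
Assume $A \notin \PFo$ and let $B_1,\dots,B_n$ enumerate the subformulas of $A$; then $\PF \not\vdash \bigwedge_i (\BP B_i \to \BF B_i) \to A$. By the finite frame property of $\PF$ (in its well-formed form, Theorem~\ref{ffp_PF}), there exist a finite nice $\PF$-frame, a valuation, and a world $w^*$ at which $\bigwedge_i (\BP B_i \to \BF B_i)$ holds while $A$ fails. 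Since $\BF B \to B$ is a $\PF$-theorem (from the $T$-axiom of $\SF$), the stronger reflection $\bigwedge_i (\BP B_i \to B_i)$ also holds at $w^*$; this is the decisive property.

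\textbf{Completeness: embedding.} We adapt the Solovay--Hamkins--L\"owe construction used in the proof of Main Theorem, arranging that the assumed $\omega$-model $M$ occupies the position of $w^*$ rather than the newly attached ``virtual'' root used there. Mirroring Solovay's transition from $\GL$ to his non-normal logic $\mathbf{S}$, we may (if convenient) attach a fresh $\accp$-root $s$ in a singleton $\SF$-component above $w^*$ and extend the valuation by copying that of $w^*$; the result is still a nice $\PF$-frame, and by induction using the reflection at $w^*$ we obtain $s \Vdash B \iff w^* \Vdash B$ for every subformula $B$ of $A$. Running the embedding inside $M$ yields sentences $S_v$ and a realization $p^* := \bigvee_{v \Vdash p} S_v$ satisfying $M \models S_s$, $M \not\models S_v$ for $v \neq s$, and $\ZFC \vdash S_v \to B^*$ iff $v \Vdash B$, with the $\accp$- and $\accf$-accessibilities correctly encoded by $\ZFC$-provability and forceability. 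Specialising at $s$ then gives $M \not\models A^*$.

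\textbf{Main obstacle.} The substantive work lies in the re-engineered embedding: $M$ must be identified with a world that validates only the bounded reflection $\BP B_i \to \BF B_i$ for subformulas of $A$, rather than with an unconstrained world above the entire frame as in the proof of Main Theorem. This is the direct bimodal analogue of Solovay's arithmetical completeness theorem for $\mathbf{S}$, and the soundness of the identification ultimately rests on the defining property of $\omega$-models, namely that $M$'s internal provability predicate agrees with genuine $\ZFC$-provability.
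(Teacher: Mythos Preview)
The soundness argument and the reduction to a finite nice $\PF$-countermodel are correct. The genuine gap is in the singleton-root construction. In Solovay's proof for $\mathbf{S}$ the language is $\LP$ only, so a singleton new root does satisfy $s \Vdash B \iff w^* \Vdash B$ for subformulas of $A$ via the reflection $\BP B \to B$. Here the language also contains $\BF$, and the equivalence fails: if $w^* \Vdash C$ but $w^* \nVdash \BF C$ for some subformula $\BF C$ of $A$, then at the singleton $s$ (whose only $\accf$-successor is itself) you get $s \Vdash C$ and hence $s \Vdash \BF C$, contradicting the claimed equivalence. The same obstruction reappears at the embedding level: the HL sentences for a singleton cluster are trivial ($\ZFC \vdash \chi_s$), so your $S_s$ reduces to the Solovay sentence $\lambda_{\{s\}}$ alone, and there is no way to derive $\ZFC \vdash S_s \to \ForceD \neg \gpf(C)$ when $w^* \nVdash \BF C$, since the forcing transitions encoded by HL sentences do not move between clusters.

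The paper's remedy is to adjoin not a singleton but a full copy $\mc C^*$ of the root cluster $\mc C_r$, carrying the same $\accf$-structure and valuation; the HL sentences for $\mc C^*$ then correctly simulate $\BF$ at the new bottom cluster, and the claim becomes: $a \Vdash^* B$ iff $\ZFC \vdash \lambda_{\mc C^*} \land \chi_{a^*}^{\mc C^*} \to \gpf(B)$ for $a \in \mc C_r$. This in turn creates a second issue you do not address: one must ensure that the given $\omega$-model $M$ satisfies the particular HL sentence $\chi_{r^*}^{\mc C^*}$ for the $\accf$-root of $\mc C^*$, and HL sentences, unlike Solovay sentences, are not arithmetical. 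The paper handles this by first proving the result for $\omega$-models of $\ZFC + \mathrm{V} = \CU$, invoking clause (5) of Theorem~\ref{HLS}, and then for an arbitrary $\omega$-model $M$ passing to $\CU^M$.
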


The present paper is organized as follows. 
In Section \ref{sec_pre}, we introduce some notions and facts needed to read the present paper.
In Section \ref{sec_pf}, we introduce the bimodal logic $\PF$ and prove that every theorem of the logic is a $\ZFC$-verifiable principle of provability and forcing. 
Section \ref{sec_ks} is devoted to the development of Kripke semantics of $\PF$, and in particular we prove the finite frame property of $\PF$.
Our main theorem stating that $\PF$ is exactly the modal logic of provability and forcing, is proved in Section \ref{sec_mt}.
In Section \ref{sec_pfo}, we introduce the logic $\PFo$ and prove that $\PFo$ is exactly the modal logic of provability and forcing true in $\omega$-models. 
In Appendix \ref{App1}, we prove that $\PF$ is a conservative extension of $\GL$ and that $\PFo$ is a conservative extension of both $\mathbf{S}$ and $\SF$. 
A version of conjunctive normal form theorem for $\PF$ is developed in Appendix \ref{App2}.

\section{Preliminaries}\label{sec_pre}

In this section, we introduce some notions and facts of set theory and modal logic which are needed to read the present paper. 

\subsection{Prerequisites for set theory}

The language $\{\in\}$ of set theory is denoted by $\mc L_\in$. 
In this subsection, $M$ denotes a countable transitive model of (some enough large fragment of )$\ZFC$.
Forcing method is very important in set theory to prove the consistency of several statements of set theory with $\ZFC$.

We call the triple $\langle \mathbb{P}, \leq, \mathbbm{1} \rangle \in M$ a \textit{forcing poset} if $\mathbb{P} \in M$, $\mathbbm{1} \in \mathbb{P}$, and $\leq$ is a partial order on $\mathbb{P}$ whose maximum element is $\mathbbm{1}$.
We abbreviate $\langle \mathbb{P}, \leq, \mathbbm{1} \rangle$ as $\mathbb{P}$ unless there is any risk of misunderstanding.
Informally speaking, with a forcing poset $\mathbb{P}$ and a generic filter $G \subseteq \mathbb{P}$ over $M$, we can construct a model $M[G]$ of $\ZFC$, and it is shown that $M[G]$ is minimum among models $M'$ of $\ZFC$ such that $M \subseteq M'$ and $G \in M'$ (see \cite{Jec03,Kun80} for details).
We call $M[G]$ a \textit{forcing extension} of $M$ by $G$.
Formally, we can investigate the behavior of $M[G]$ by using the \textit{forcing relation}.
For each $\mc L_{\in}$-sentence $\varphi$, we can define the $\mc L_{\in}$-formula $\forces_{\mathbb{P}} \varphi$ with a parameter $\mathbb{P}$, which means that every forcing extension defined by $\mathbb{P}$ satisfies $\varphi$. 
See \cite[Lemma IV.2.24 and Lemma IV.2.25]{Kun80} for the detailed definition.
We say \textit{$\mathbb{P}$ forces $\varphi$} if and only if $\Pforces{\mathbb{P}} \varphi$.





We say that an $\LS$-formula $\varphi$ is \textit{arithmetical} if it is of the form $\psi^\omega$ for some $\LS$-sentence $\psi$, where $\psi^\omega$ is the $\LS$-formula obtained from $\psi$ by replacing all occurrences of $\forall x$ and $\exists x$ by $\forall x \in \omega$ and $\exists x \in \omega$, respectively. 
Arithmetical sentences are $\Delta_0$-sentences with using $\omega$ as a parameter.
Since forcing does not change $\omega$,
we obtain the following proposition.

\begin{prop}[Cf.~{\cite[Lemma 14.21]{Jec03}}] \label{arithAbsolute_forcing}
    For any arithmetical $\mc L_{\in}$sentence $\varphi$,
    \[\ZFC \vdash \forall \mathbb{P} : \text{forcing poset}. \; (\varphi \leftrightarrow \Pforces{\mathbb{P}} \varphi).\]
\end{prop}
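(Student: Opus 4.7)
The plan is to reduce the proposition to the standard absoluteness of arithmetical sentences under forcing (cited as Jech's Lemma 14.21 in the statement). The point is that an arithmetical sentence $\varphi = \psi^\omega$ is a $\Delta_0$-formula in the single parameter $\omega$, and since $\omega$ and the canonical names $\check n$ for $n \in \omega$ are preserved by any forcing, $\varphi$ holds in a model $V$ of $\ZFC$ if and only if it holds in every forcing extension $V[G]$.

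Granted this absoluteness, the proposition is almost immediate. Working inside $\ZFC$, I would fix a forcing poset $\mathbb{P}$. For the forward direction, assume $\varphi$ holds; then by absoluteness $\varphi$ holds in every $\mathbb{P}$-generic extension, which is precisely the intended meaning of $\Pforces{\mathbb{P}} \varphi$. For the converse, I would use that $\neg \varphi$ is itself arithmetical, since arithmetical sentences are closed under Boolean connectives (note $(\neg \psi)^\omega \equiv \neg(\psi^\omega)$). Hence if $\varphi$ failed in $V$, absoluteness would give $\neg \varphi$ in every extension, contradicting $\Pforces{\mathbb{P}} \varphi$.

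For a self-contained proof of the underlying absoluteness, I would induct on the construction of $\psi$, using the recursive clauses of the forcing relation $\forces_{\mathbb{P}}$. The atomic and Boolean cases are routine, with the Boolean case relying on the observation that $\mathbb{P}$ decides every arithmetical sentence by the induction hypothesis. The only substantive step is for quantifiers, which after relativization are bounded by $\omega$; here I would invoke the standard equivalence $\Pforces{\mathbb{P}} (\forall x \in \check\omega)\, \chi \leftrightarrow \forall n \in \omega \; \Pforces{\mathbb{P}} \chi(\check n)$, and dually for $\exists$, and then apply the induction hypothesis pointwise to each $n \in \omega$. The conceptual crux, and the only place where the arithmetical hypothesis is really used, is the preservation of $\omega$ under forcing, which is what makes bounded quantifiers over $\omega$ behave absolutely; once this is in hand the induction mirrors the usual proof that $\Delta_0$-formulas are absolute between transitive models.
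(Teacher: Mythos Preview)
Your proposal is correct and matches the paper's approach exactly: the paper does not give a formal proof but simply remarks (immediately before the proposition) that arithmetical sentences are $\Delta_0$ with parameter $\omega$ and that forcing does not change $\omega$, then defers to Jech's Lemma 14.21. Your write-up is a faithful unpacking of that remark, with the extra induction sketch being more detail than the paper itself provides.
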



Inner models of set theory also play significant roles.
In particular, \textit{the universe of constructible sets} $\CU$ is one of important inner models.
See \cite[Definition II.6.1]{Kun80} for the detailed definition.
Let $\mathrm{V} = \CU$ be the $\mc L_{\in}$-sentence $\forall x\, (x \in \CU)$ which means that all sets are constructible.
For $\mc L_{\in}$-sentence $\varphi$, $\varphi^{\CU}$ denotes the sentence obtained from $\varphi$ by replacing all occurrences of $\forall x$ and $\exists x$ by $\forall x \in \CU$ and $\exists x \in \CU$, respectively. 
Every model $M$ of $\ZFC$ has $\CU$ as its inner model, which is denoted by $\CU^M$.
Then, it is shown that $\CU^M \models \ZFC + \mathrm{V} = \CU$. 

Since $\ZFC \vdash \forall x (x =\omega^{\CU} \leftrightarrow x = \omega)$,
the class $\CU$ has the following absoluteness property.



\begin{prop}[Cf.~{\cite[Lemma I.16.2]{Kun80}}] \label{arithAbsolute_forcing_L}
    For any arithmetical $\mc L_{\in}$-sentence $\varphi$,
    \[ \ZFC \vdash \varphi \leftrightarrow \varphi^{\CU}. \]
\end{prop}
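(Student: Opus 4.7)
My plan is to argue by induction on the construction of the arithmetical sentence $\varphi = \psi^\omega$, all within $\ZFC$. By definition, $\psi^\omega$ is built from atomic $\LS$-formulas using Boolean connectives and only quantifiers of the bounded form $\forall x \in \omega$ and $\exists x \in \omega$. The key external input is the absoluteness of $\omega$ under $\CU$, namely $\ZFC \vdash \omega^{\CU} = \omega$, which has been noted just above the proposition.

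First I would dispose of the atomic case. Because $\CU$ is a transitive class, $(x \in y)^{\CU} \leftrightarrow x \in y$ and $(x = y)^{\CU} \leftrightarrow x = y$ hold whenever $x, y \in \CU$. Boolean connectives commute with relativization syntactically, so $(\neg \chi)^{\CU} \equiv \neg \chi^{\CU}$ and similarly for $\wedge, \vee, \to$, which takes care of the propositional inductive steps for free.

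The interesting case is the bounded-quantifier step. Given a subformula of the form $\forall x \in \omega \, \theta$, its relativization $(\forall x \in \omega \, \theta)^{\CU}$ unfolds to $\forall x \in \CU (x \in \omega^{\CU} \to \theta^{\CU})$. Appealing to $\omega^{\CU} = \omega$, and to the fact that $\omega \subseteq \CU$ (every natural number is constructible), this reduces to $\forall x (x \in \omega \to \theta^{\CU})$, which by the induction hypothesis on $\theta$ is equivalent to $\forall x \in \omega \, \theta$. The existential case $\exists x \in \omega \, \theta$ is dual. Since $\varphi$ is a sentence, the induction terminates and yields $\varphi \leftrightarrow \varphi^{\CU}$.

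The main obstacle, such as it is, is purely notational bookkeeping: one must track that in forming $\varphi^{\CU}$ the defined term $\omega$ is replaced by its $\CU$-relativization $\omega^{\CU}$, and then deploy $\omega^{\CU} = \omega$ together with $\omega \subseteq \CU$ at exactly the bounded-quantifier step. No deeper set-theoretic content beyond these facts and the transitivity of $\CU$ is required, which is why the result can be stated as a folklore reference to Kunen.
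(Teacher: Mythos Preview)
Your argument is correct and is exactly the standard unfolding one expects here: induction on the build-up of $\psi^\omega$, using transitivity of $\CU$ for the atomic step and $\omega^{\CU}=\omega$ together with $\omega\subseteq\CU$ at the bounded-quantifier step. The paper does not supply its own proof of this proposition; it merely records the key input $\ZFC\vdash\forall x\,(x=\omega^{\CU}\leftrightarrow x=\omega)$ in the line preceding the statement and defers to Kunen for the rest, so your write-up is a faithful expansion of what the paper leaves implicit.

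One minor point of bookkeeping you might make explicit: since the induction passes through subformulas with free variables, the inductive statement should be formulated for arithmetical \emph{formulas} $\theta(\bar x)$ with parameters $\bar a\in\omega$ (or, more generally, in $\CU$), asserting $\theta(\bar a)\leftrightarrow\theta^{\CU}(\bar a)$; the sentence case then falls out at the end. You gesture at this when invoking the induction hypothesis on $\theta$, but stating it up front would remove any ambiguity.
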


\subsection{The modal logic of provability}

Let $\LP$ denote the language of modal propositional logic having countably many propositional variables, propositional connectives $\neg, \land, \lor, \to$, and one unary modal operator $\BP$. 
We assume that the unary modal operator $\DP$ is the abbreviation of $\neg \BP \neg$. 

\begin{defn}
The logic $\GL$ in the language $\LP$ is defined as follows: 
    \begin{itemize}
        \item The axioms of $\GL$ are:
        \begin{enumerate}
            \item All propositional tautologies in the language $\LP$.  
            \item $\BP (A \to B) \to (\BP A \to \BP B)$. 
            \item $\BP (\BP A \to A) \to \BP A$. 
        \end{enumerate}
        \item The inference rules of $\GL$ are modus ponens and necessitation $\dfrac{A}{\BP A}$. 
    \end{itemize}
\end{defn}

\begin{defn}
    We say that a tuple $(W, \accp)$ is a \textit{Kripke frame} iff $W$ is a non-empty set and $\accp$ is a binary relation on $W$. 
We say that a tuple $(W, \accp, \Vdash)$ is a \textit{Kripke model} iff $(W, \accp)$ is a Kripke frame and $\Vdash$ is a binary relation between $W$ and the set of all $\LP$-formulas fulfilling the usual conditions of satisfaction relation for propositional connectives and the following condition: 
\[
    x \Vdash \BP A \iff \forall y \in W\, (x \accp y \Rightarrow y \Vdash A). 
\]
We say an $\LP$-formula $A$ is \textit{valid} in a Kripke model $(W, \accp, \Vdash)$ iff $x \Vdash A$ for all $x \in W$. 
We also say that $A$ is valid on a Kripke frame $\mc F = (W, \accp)$, denoted by $\mc F \vDash A$, iff $A$ is valid in all Kripke models $(W, \accp, \Vdash)$ based on the frame $(W, \accp)$. 
\end{defn}

We say that a Kripke frame $\mc F$ is a \textit{$\GL$-frame} iff all theorems of $\GL$ are valid on $\mc F$. 
A Kripke frame $(W, \accp)$ is said to be \textit{conversely well-founded} iff there is no infinite $\accp$-chain of elements of $W$. 

\begin{prop}[Cf.~Boolos {\cite[Theorem 4.10]{Boo93}}]
For any Kripke frame $\mc F = (W, \accp)$, the following are equivalent:
\begin{enumerate}
    \item $\mc F$ is a $\GL$-frame. 
    \item $\accp$ is transitive and conversely well-founded. 
\end{enumerate}
\end{prop}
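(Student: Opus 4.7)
The plan is to prove each direction separately; this is the standard frame correspondence theorem for $\GL$, and I would split it into a soundness half and a frame-condition-extraction half.

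For (2) $\Rightarrow$ (1), I would verify soundness of $\GL$ on any transitive, conversely well-founded frame. Propositional tautologies and the K axiom are valid on every Kripke frame, while modus ponens and necessitation trivially preserve validity. The substantive verification is that the L\"ob axiom $\BP(\BP A \to A) \to \BP A$ is valid on such frames. Given a Kripke model over $\mc F$ and $x \in W$, I would assume $x \Vdash \BP(\BP A \to A)$ and argue by contradiction: if $x \not\Vdash \BP A$, then the set $S = \{y \in W : x \accp y \text{ and } y \not\Vdash A\}$ is nonempty, and by converse well-foundedness it contains a $\accp$-maximal element $y^*$. From $x \accp y^*$ and the assumption, $y^* \Vdash \BP A \to A$, and since $y^* \not\Vdash A$ we get $y^* \not\Vdash \BP A$, so there is $z$ with $y^* \accp z$ and $z \not\Vdash A$. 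Transitivity gives $x \accp z$, so $z \in S$, contradicting maximality of $y^*$.

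For (1) $\Rightarrow$ (2), I would first extract transitivity. It is a standard fact that the 4 axiom $\BP A \to \BP \BP A$ is derivable in $\GL$ (from the L\"ob axiom, K, and necessitation). Assuming for contradiction that $x \accp y$, $y \accp z$, and $x \not\accp z$, I would define a valuation making a variable $p$ true exactly at the $\accp$-successors of $x$. Then $x \Vdash \BP p$ by construction, while $\BP p$ fails at $y$ because $y \accp z$ and $z \not\Vdash p$; hence $x \not\Vdash \BP \BP p$, contradicting validity of the 4 axiom on $\mc F$.

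For converse well-foundedness, I would suppose there is an infinite chain $x_0 \accp x_1 \accp x_2 \accp \cdots$ and use the transitivity just established. The key trick is to define a valuation by $y \Vdash p$ iff $y \notin \{x_i : i \in \omega\}$. Then $\BP p$ fails at $x_0$ since $x_0 \accp x_1$ and $x_1 \not\Vdash p$. To check that $\BP(\BP p \to p)$ holds at $x_0$, take any $y$ with $x_0 \accp y$: if $y \notin \{x_i\}$, then $y \Vdash p$, so $y \Vdash \BP p \to p$ trivially; if $y = x_i$ for some $i \geq 1$, then $y \accp x_{i+1}$ (by transitivity) and $x_{i+1} \not\Vdash p$, so $y \not\Vdash \BP p$ and again $y \Vdash \BP p \to p$ vacuously. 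This falsifies the L\"ob axiom at $x_0$, contradicting (1). The main obstacle is conceptual rather than technical: spotting the correct valuation for the L\"ob direction is the crucial insight, while the transitivity case is straightforward once one recalls that 4 is derivable in $\GL$.
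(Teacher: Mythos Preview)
The paper does not give its own proof of this proposition; it simply records the result with a reference to Boolos. Your argument is the standard one and is correct. A few small remarks: in the converse well-foundedness half, the parenthetical ``(by transitivity)'' for $y \accp x_{i+1}$ is superfluous, since $x_i \accp x_{i+1}$ holds directly from the chain; and your case ``$y = x_i$ for some $i \geq 1$'' should formally allow $i = 0$ (in case $x_0 \accp x_0$), but the identical reasoning covers that case. In the soundness direction you pass from ``no infinite $\accp$-chain'' (the paper's definition of conversely well-founded) to ``every nonempty subset has a $\accp$-maximal element''; this step uses dependent choice, which is harmless here since the ambient setting is $\ZFC$.
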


A $\GL$-frame $(W, \accp)$ is said to be \textit{rooted} iff there exists an element $r \in W$ such that $r \accp x$ for all $x \in W \setminus \{r\}$. 
We call such an element $r$ the \textit{root} of the frame. 
The logic $\GL$ enjoys the following finite frame property. 

\begin{thm}[The finite frame property of $\GL$ (cf.~Boolos {\cite[Chapter 5]{Boo93}})]\label{ffp_GL}
For any $\LP$-formula $A$, the following are equivalent:
\begin{enumerate}
    \item $\GL \vdash A$. 
    \item $A$ is valid on all $\GL$-frames. 
    \item $A$ is valid on all finite rooted $\GL$-frames. 
    \item $A$ is true in the root of all finite rooted $\GL$-models. 
\end{enumerate}
\end{thm}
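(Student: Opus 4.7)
The plan is to establish the cycle (1) $\Rightarrow$ (2) $\Rightarrow$ (3) $\Rightarrow$ (4) $\Rightarrow$ (1). The first three implications are direct. For (1) $\Rightarrow$ (2), I would verify by induction on derivations that each axiom of $\GL$ is valid on every $\GL$-frame and that modus ponens and necessitation preserve validity; the only nontrivial case is the L\"ob axiom, for which both transitivity and converse well-foundedness are essential, as guaranteed by the characterization of $\GL$-frames cited just above. (2) $\Rightarrow$ (3) is immediate since finite rooted $\GL$-frames are in particular $\GL$-frames, and (3) $\Rightarrow$ (4) is trivial because validity on a frame implies truth at every world of every model on it, in particular at the root.

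The substantive direction is (4) $\Rightarrow$ (1), which I prove contrapositively. Assuming $\GL \nvdash A$, I construct a finite rooted $\GL$-model in which $A$ fails at the root. Fix a finite set $\Phi$ of $\LP$-formulas containing $A$ that is closed under subformulas and single negations (an adequate set); let $W$ be the set of all maximal $\GL$-consistent subsets of $\Phi$. Define $x \accp y$ to hold iff
\[
\{\BP B : \BP B \in x\} \cup \{B : \BP B \in x\} \subseteq y
\quad \text{and} \quad
\exists\, \BP C \in y \text{ with } \BP C \notin x.
\]
The first clause, together with the $\GL$-theorem $\BP B \to \BP \BP B$, yields transitivity of $\accp$; the second clause forces the number of boxed formulas of $\Phi$ contained in worlds to strictly increase along $\accp$-chains, so that $\accp$ is conversely well-founded on the finite set $W$. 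Thus $(W, \accp)$ is a finite $\GL$-frame, and equipping it with the valuation sending each propositional variable $p$ to $\{x \in W : p \in x\}$ gives a finite $\GL$-model.

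The main obstacle is the truth lemma $x \Vdash B \iff B \in x$ for every $B \in \Phi$. Boolean cases are routine; the decisive case is $B = \BP C$. The forward direction is immediate from the first clause defining $\accp$. For the converse, assume $\BP C \notin x$; I must produce $y \in W$ with $x \accp y$ and $\neg C \in y$. I claim
\[
\Gamma := \{\BP D : \BP D \in x\} \cup \{D : \BP D \in x\} \cup \{\BP C, \neg C\}
\]
is $\GL$-consistent: otherwise $\GL \vdash \bigwedge \{\BP D \land D : \BP D \in x\} \to (\BP C \to C)$, and then applying necessitation, the distribution axiom, and $\BP D \to \BP \BP D$, the conjunction of the $\BP D$ in $x$ would prove $\BP(\BP C \to C)$, whence L\"ob would force $\BP C \in x$, contradicting our assumption. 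Extending $\Gamma$ to a maximal $\Phi$-consistent $y$ gives the required successor, with $\BP C \in y \setminus x$ ensuring the second clause of $\accp$. Once the truth lemma is in hand, $\{\neg A\}$ extends to some $x_0 \in W$ with $x_0 \not\Vdash A$; restricting to the generated subframe $\{x_0\} \cup \{y : x_0 \accp y\}$ yields a finite rooted $\GL$-countermodel. The consistency argument for $\Gamma$ is the one genuinely delicate step, and it is precisely where the L\"ob axiom pays off.
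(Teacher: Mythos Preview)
The paper does not give its own proof of this theorem; it is stated with a reference to Boolos and used as background. Your argument is correct and is essentially the standard selective-filtration proof found in Boolos: the adequate set $\Phi$, the irreflexive accessibility with a strictly-increasing ``witness'' clause, the L\"ob-based consistency argument for the existence lemma, and the passage to a rooted generated submodel. One minor remark: with your definition of $\accp$ (which already requires $\{\BP B : \BP B \in x\} \subseteq y$), transitivity is immediate and does not actually need $\BP B \to \BP\BP B$; that schema is only needed in the variant where the first clause is just $\{B : \BP B \in x\} \subseteq y$. This does not affect correctness.
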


The logic $\GL$ is known as the modal logic of provability. 
Syntactical notions can be formalized in set theory by using arithmetical $\LS$-formulas. 
An arithmetical $\LS$-formula $\mathrm{Pr}_{\ZFC}(x)$ is called a \textit{provability predicate of $\ZFC$} iff it is a natural formalization of the $\ZFC$-provability. 
Throughout the paper, the sentence $\mathrm{Pr}_{\ZFC}(\varphi)$ is abbreviated by $\Prov \varphi$. 
Then, it is known that the following theorem holds: 

\begin{thm}[Derivability conditions and formalized L\"ob's theorem \cite{Lob55} (see also \cite{Boo93}]\label{DC}
For any $\LS$-sentences $\varphi$ and $\psi$: 
\begin{enumerate}
    \item If $\ZFC \vdash \varphi$, then $\ZFC \vdash \Prov \varphi$, 
    \item $\ZFC \vdash \Prov (\varphi \to \psi) \to (\Prov \varphi \to \Prov \psi)$, 
    \item $\ZFC \vdash \Prov \varphi \to \Prov \Prov \varphi$, 
    \item $\ZFC \vdash \Prov (\Prov \varphi \to \varphi) \to \Prov \varphi$. 
\end{enumerate}
\end{thm}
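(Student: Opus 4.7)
The plan is to verify the four items in turn, treating items (1)--(3) --- the Hilbert--Bernays--L\"ob derivability conditions --- as the foundation on which (4), the formalized L\"ob's theorem, is built. Everything begins with the specification of $\mathrm{Pr}_{\ZFC}(x)$ as a $\Sigma_1$-formula of the shape $\exists p\, \mathrm{Proof}_{\ZFC}(p, x)$, with $\mathrm{Proof}_{\ZFC}$ a natural $\Delta_0$ formalization of the relation ``$p$ codes a $\ZFC$-proof of the sentence with G\"odel number $x$''.

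For (1), if $\ZFC \vdash \varphi$ then some standard natural number $n$ genuinely codes a proof; the sentence $\mathrm{Proof}_{\ZFC}(\bar n, \bar \varphi)$ is then a true closed $\Delta_0$-sentence, hence provable in $\ZFC$, and existential generalization yields $\ZFC \vdash \Prov \varphi$. For (2), I would formalize the primitive-recursive operation that, given codes of proofs of $\varphi \to \psi$ and of $\varphi$, concatenates them and appends one modus ponens step to obtain a code of a proof of $\psi$. For (3), since $\Prov \varphi$ is itself a $\Sigma_1$-sentence, the implication is a direct instance of formalized $\Sigma_1$-completeness, established by a (laborious but standard) induction on the complexity of $\Sigma_1$-formulas carried out inside $\ZFC$.

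For (4), I would apply the diagonal lemma to fix an $\LS$-sentence $\sigma$ with $\ZFC \vdash \sigma \leftrightarrow (\Prov \sigma \to \varphi)$. Iterated applications of (1) and (2) to the forward half of this biconditional, combined with (3), produce $\ZFC \vdash \Prov \sigma \to \Prov \varphi$. Propositional reasoning from this and the reverse half of the biconditional then gives $\ZFC \vdash (\Prov \varphi \to \varphi) \to \sigma$; applying (1) and then (2) to this boxed implication yields $\ZFC \vdash \Prov(\Prov \varphi \to \varphi) \to \Prov \sigma$, and composing with $\Prov \sigma \to \Prov \varphi$ delivers the desired $\ZFC \vdash \Prov(\Prov \varphi \to \varphi) \to \Prov \varphi$.

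The main obstacle is the formalized $\Sigma_1$-completeness needed for (3): although conceptually routine, it requires carrying out syntactic inductions entirely inside $\ZFC$ with enough care that the modal manipulations in (4) can be rewritten as genuine formal $\ZFC$-derivations. Once that groundwork is in place, items (1), (2), and (4) follow the combinatorial pattern sketched above, with the diagonal lemma (itself a by-product of G\"odelian coding) providing the only nontrivial ingredient specific to (4).
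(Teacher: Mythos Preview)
Your sketch is correct and follows the standard route to the derivability conditions and L\"ob's theorem as found in, e.g., Boolos's book. Note, however, that the paper does not actually prove this theorem: it is stated with citations to L\"ob and Boolos and used as a black box, so there is no ``paper's own proof'' to compare against. Your argument is precisely the textbook one those references contain.
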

The logic $\GL$ is formulated to capture the $\ZFC$-verifiable modal principles concerning the provability predicate $\Prov$ of $\ZFC$. 
In particular, the axiom $\BP( \BP A \to A) \to \BP A$ of $\GL$ corresponds to the last clause of Theorem \ref{DC} which is the formalized version of L\"ob's theorem. 

To connect $\GL$ and set theory more precisely, we introduce the notion of translation. 
A mapping from the set of all propositional variables to a set of $\LS$-sentences is called a \textit{translation}. 
Each translation $g$ is uniquely extended to the mapping $\gp$ from the set of all $\LP$-formulas to a set of $\LS$-sentences by the following clauses: 
\begin{enumerate}
    \item $\gp$ commutes with each propositional connective, 
    \item $\gp(\BP A)$ is $\Prov \gp(A)$. 
\end{enumerate}

We say an extension $T$ of $\ZFC$ is \textit{arithmetically $\Sigma_1$-sound} iff for any $\Sigma_1$ $\LS$-sentence $\varphi$, we have $\omega \vDash \varphi$ whenever $T \vdash \varphi^\omega$.
We are ready to state Solovay's theorem. 

\begin{thm}[Solovay \cite{Sol76}] \label{completeness_GL}
If $\ZFC$ is arithmetically $\Sigma_1$-sound, then for any $\LP$-formula $A$, the following are equivalent: 
\begin{enumerate}
    \item $\GL \vdash A$. 
    \item $\ZFC \vdash \gp(A)$ for all translations $g$. 
\end{enumerate}
\end{thm}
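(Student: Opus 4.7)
The plan is to prove the two directions separately. The implication $(1) \Rightarrow (2)$, i.e., arithmetical soundness, is straightforward by induction on the length of a $\GL$-proof. Tautologies and modus ponens translate automatically; the distribution axiom and the necessitation rule are justified by clauses (2) and (1) of Theorem \ref{DC}; and the L\"ob axiom $\BP(\BP A \to A) \to \BP A$ translates to an instance of formalized L\"ob's theorem (clause (4) of Theorem \ref{DC}). Note that this direction does not use $\Sigma_1$-soundness.

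For the harder direction $(2) \Rightarrow (1)$, I would argue the contrapositive: assume $\GL \nvdash A$, and produce a translation $g$ with $\ZFC \nvdash \gp(A)$. By the finite frame property (Theorem \ref{ffp_GL}(4)), there is a finite rooted $\GL$-model $(W, \accp, \Vdash)$ with root $1$ such that $1 \not\Vdash A$. Adjoin a fresh node $0$ below the root, so $0 \accp i$ for all $i \in W$, and work on $W^* = \{0\} \cup W$. The core construction is the Solovay function: using the recursion theorem, define a primitive-recursive $h: \omega \to W^*$ with $h(0) = 0$, and $h(n+1) = j$ whenever $h(n) = i$, $i \accp j$, and some $\Sigma_1$-witness at stage $n+1$ formalizes ``$\ZFC$ proves $\lim h \neq j$''; otherwise $h(n+1) = h(n)$. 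Let $S_i$ be the $\LS$-sentence expressing $\lim h = i$, and define the translation $g(p) := \bigvee\{S_i \mid i \Vdash p\}$, extended to $\gp$ as specified.

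The substantial content lies in the following lemmas, verified inside $\ZFC$: (a) $\lim h$ exists and lies in $W$, so $\bigvee_{i \in W} S_i$ is a theorem; (b) the $S_i$ are pairwise exclusive; (c) for $i \neq 0$, $\ZFC \vdash S_i \to \Prov\bigl(\bigvee_{i \accp j} S_j\bigr)$, reflecting that once $h$ stabilizes at $i$, $\ZFC$ proves no admissible further jump ever fires; (d) for $i \accp j$, $\ZFC \vdash S_i \to \neg \Prov(\neg S_j)$, since $\ZFC$ proving $\neg S_j$ would have caused $h$ to jump from $i$ to $j$ by construction. From (a)--(d) one proves by induction on the complexity of a subformula $B$ that for every $i \in W$, $\ZFC$ decides $S_i \to \gp(B)$ in accordance with $i \Vdash B$; thus $\ZFC \vdash S_1 \to \neg \gp(A)$. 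Finally, arithmetical $\Sigma_1$-soundness gives $\ZFC \nvdash \neg S_1$: the sentence $\neg S_1$ is $\Sigma_1$ in the relevant sense, and were it provable, the construction would force $\omega \vDash \lim h = 0$, contradicting the definition of the jumps from $0$. Combining these yields $\ZFC \nvdash \gp(A)$, as desired.

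The main obstacle is the Solovay construction itself: setting up $h$ via the recursion theorem in a way that is $\ZFC$-provably total, and then verifying the self-referential provability lemmas (c) and (d), which are the genuinely delicate step where formalized provability, the fixed-point machinery, and the choice of which provability assertion drives each jump all have to be aligned. The $\Sigma_1$-soundness hypothesis enters only at the very end, to turn the formal non-derivability of $\neg S_1$ into a failure of $\ZFC \vdash \gp(A)$.
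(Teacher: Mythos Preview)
The paper does not prove this theorem; it is quoted as Solovay's result, with the underlying construction packaged as Theorem~\ref{SS} (the Solovay sentences $\{\lambda_i\}_{i\in W}$ and their six properties), which is what the paper actually invokes downstream. Your sketch is the standard Solovay argument and aligns with what lies behind Theorem~\ref{SS}, so there is no real divergence to report.

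Two points need tightening. In (a), $\ZFC$ proves only that $\lim h$ exists and lies in $W^* = W \cup \{0\}$, not in $W$: under $\Sigma_1$-soundness one has $\lim h = 0$ in $\omega$, so $\ZFC$ cannot prove $\lim h \neq 0$. This slip is harmless, since the truth lemma for $i \in W$ uses only (b)--(d). More substantively, your final paragraph is garbled: $\neg S_1$ is not $\Sigma_1$, and the implication you describe runs the wrong way. The clean route is to first establish $\lim h = 0$ in $\omega$ from $\Sigma_1$-soundness---if $\lim h = l \neq 0$, the jump into $l$ gives $\ZFC \vdash \neg S_l$; combined with the $\Sigma_1$-complete fact $\exists n\, h(n) = l$ and the $\ZFC$-provable monotonicity of $h$ along $\accp$, this yields $\ZFC \vdash \exists n\, (l \accp h(n))$, a false $\Sigma_1$ sentence---and then observe that any actual proof of $\neg S_1$ would trigger a jump away from $0$ in $\omega$, contradicting $\lim h = 0$. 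With these fixes your outline is correct and is exactly the content the paper summarizes in Theorem~\ref{SS}(1)--(6).
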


In his proof of Solovay's arithmetical completeness theorem, an ingenious method of embedding finite rooted $\GL$-frames into arithmetic was presented. 

\begin{thm}\label{SS}
For any finite $\GL$-frame $\mc F = (W, \accp)$ with the root $r$, there exist arithmetical $\LS$-sentences $\{\lambda_i\}_{i \in W}$ satisfying the following conditions: for $i, j \in W$,
\begin{enumerate}
    \item if $i \neq j$, then $\ZFC \vdash \lambda_i \to \neg \lambda_j$, 
    \item $\ZFC \vdash \bigvee_{k \in W} \lambda_k$,
    \item if $i \neq r$, then $\ZFC \vdash \lambda_i \to \Prov \bigvee_{i \accp k} \lambda_k$, 
    \item if $i \accp j$, then $\ZFC \vdash \lambda_i \to \Con \lambda_j$, 
    \item if $i \neq r$, then $\ZFC \vdash \lambda_i \to \Prov \neg \lambda_i$,
    \item if $\ZFC$ is arithmetically $\Sigma_1$-sound, then $\ZFC \nvdash \neg \lambda_i$. 
\end{enumerate}
\end{thm}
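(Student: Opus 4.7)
The plan is to carry out Solovay's classical construction, with the given root $r \in W$ playing the role of the auxiliary sink that Solovay originally added below his frame. Using the diagonal lemma, I would simultaneously define a primitive recursive function $h \colon \omega \to W$ and arithmetical sentences $\lambda_i$ ($i \in W$), with each $\lambda_i$ being $\ZFC$-provably equivalent to ``$\lim_n h(n) = i$''. The function is specified by $h(0) = r$ and
\[
h(n+1) = \begin{cases} j & \text{if } h(n) \accp j \text{ and } n \text{ codes a } \ZFC\text{-proof of } \neg \lambda_j, \\ h(n) & \text{otherwise}, \end{cases}
\]
with ties broken by a fixed enumeration of $W$. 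Since $\accp$ is transitive and conversely well-founded and $W$ is finite, $\ZFC$ proves that $h$ stabilizes at a unique element, yielding properties 1 and 2. I would also observe that, because $h$ starts at $r$ and only moves $\accp$-upward, $\lambda_r$ is $\ZFC$-provably equivalent to the $\Pi_1$ sentence $\forall n\,(h(n) = r)$.

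Properties 3, 4, and 5 follow by formalizing elementary dynamics of $h$ inside $\ZFC$. For property 4 with $i \accp j$: working in $\ZFC + \lambda_i$, any proof of $\neg \lambda_j$ would eventually force $h$ to transition upward past $i$, contradicting $\lim h = i$; hence $\Con \lambda_j$. For property 5 with $i \neq r$: in $\ZFC + \lambda_i$, since $h(0) = r \neq i$, the function $h$ must have moved into $i$ at some stage, which is permitted only when a $\ZFC$-proof of $\neg \lambda_i$ is available, so $\Prov \neg \lambda_i$. Property 3 then follows by combining 5 with the monotonicity of $h$: provably $\lim h$ is $\accp$-weakly above $i$, and property 5 rules out $\lim h = i$, leaving $\lim h \in \{k : i \accp k\}$.

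For property 6 I would argue externally under the $\Sigma_1$-soundness hypothesis that $h$ stabilizes at $r$ in the standard model. Suppose instead that $h$ stabilizes externally at some $i^* \neq r$; following its trajectory yields a $\accp$-chain $r = k_0 \accp k_1 \accp \cdots \accp k_m = i^*$ of distinct values. Each transition requires an actual $\ZFC$-proof, so $\ZFC \vdash \neg \lambda_{k_l}$ for $l \geq 1$, and downward induction via property 4 extends this to $l = 0$. By property 2, $\ZFC \vdash \bigvee_{k \in W \setminus \{k_0, \ldots, k_m\}} \lambda_k$; applying property 5 (valid because every $k$ in this disjunction differs from $r$) produces the $\Sigma_1$ theorem $\ZFC \vdash \bigvee_{k \in W \setminus \{k_0, \ldots, k_m\}} \Prov \neg \lambda_k$, so by $\Sigma_1$-soundness some off-chain $k^*$ satisfies $\ZFC \vdash \neg \lambda_{k^*}$. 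Iterating this narrowing exhausts $W$ and eventually forces $\ZFC \vdash \bot$ via property 2, contradicting the consistency of $\ZFC$ entailed by $\Sigma_1$-soundness. Hence $\omega \vDash \lambda_r$; then $\ZFC \nvdash \neg \lambda_j$ for $j \neq r$ holds because no $\ZFC$-proof of $\neg \lambda_j$ can exist (else $h$ would have left $r$), while $\ZFC \nvdash \neg \lambda_r$ follows from $\Sigma_1$-soundness applied to the $\Sigma_1$ sentence $\neg \lambda_r$.

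The main obstacle will be the iterated $\Sigma_1$-soundness narrowing argument in property 6: one must track carefully that each iteration strictly extends the set of $k$ with $\ZFC \vdash \neg \lambda_k$ and keep the quantifier complexities in order (notably that $\neg \lambda_r$ becomes $\Sigma_1$ via the monotonicity of $h$), so that $\Sigma_1$-soundness is invoked only where applicable. The remaining work, once the diagonal definition of $h$ and the $\lambda_i$ is in place, is a routine formalization using the derivability conditions for $\Prov$.
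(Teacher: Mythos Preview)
The paper does not prove this theorem; it is quoted as a classical result of Solovay, and the sentences $\lambda_i$ are simply christened \textit{Solovay sentences} for later use. Your sketch is the standard Solovay construction (with the frame's root $r$ serving as the auxiliary sink), and it is correct.

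One remark on property~6: your iterated narrowing argument is valid but more elaborate than needed. The usual shortcut is to observe that if $\lim h = i^* \neq r$ externally, then for every $j$ with $i^* \accp j$ one has $\ZFC \nvdash \neg \lambda_j$ directly from the dynamics of $h$ (any such proof, at the stage coding it, would have pushed $h$ above $i^*$, since every value $h$ takes lies $\accp$-at-or-below $i^*$ and hence $\accp$-below $j$). On the other hand, from $\ZFC \vdash h(n_0) = i^*$ and $\ZFC \vdash \neg \lambda_{i^*}$ one gets $\ZFC \vdash \bigvee_{i^* \accp k} \lambda_k$, whence by property~5 the $\Sigma_1$ sentence $\bigvee_{i^* \accp k} \Prov \neg \lambda_k$ is provable; $\Sigma_1$-soundness then yields some $j$ with $i^* \accp j$ and $\ZFC \vdash \neg \lambda_j$, an immediate contradiction. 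This avoids the inductive exhaustion of $W$.
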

We call such sentences $\{\lambda_i\}_{i \in W}$ \textit{Solovay sentences of $\mc F$}.

\subsection{The modal logic of forcing}

Let $\LF$ denote the language of modal propositional logic with one unary modal operator $\BF$. 

\begin{defn}
The logic $\SF$ in the language $\LF$ is defined as follows: 
    \begin{itemize}
        \item The axioms of $\SF$ are:
        \begin{enumerate}
            \item All propositional tautologies in the language $\LF$.  
            \item $\BF (A \to B) \to (\BF A \to \BF B)$. 
            \item $\BF A \to A$. 
            \item $\BF A \to\BF \BF A$. 
            \item $\DF \BF A \to \BF \DF A$. 
        \end{enumerate}
        \item The inference rules of $\BF$ are modus ponens and necessitation $\dfrac{A}{\BF A}$. 
    \end{itemize}
\end{defn}

In Kripke semantics, the symbol of binary relation for interpreting $\BP$ is written as $\accp$, but we will use $\accf$ as the symbol of binary relation for interpreting $\BF$.
We say that a Kripke frame $(W, \accf)$ is \textit{upward directed} iff for any $x, y, z \in W$, if $x \accf y$ and $x \accf z$, then there exists a $u \in W$ such that $y \accf u$ and $z \accf u$. 
As in the case of $\GL$, we say that a Kripke frame $\mc F$ is an $\SF$-frame iff all theorems of $\SF$ are valid on $\mc F$. 

\begin{prop}[Cf.~Chagrov and Zakharyaschev {\cite[Proposition 3.34]{CZ97}}]
For any Kripke frame $\mc F = (W, \accf)$, the following are equivalent:
\begin{enumerate}
    \item $\mc F$ is an $\SF$-frame. 
    \item $\accf$ is transitive, reflexive and upward directed. 
\end{enumerate}
\end{prop}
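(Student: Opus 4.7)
The plan is to prove the two implications separately by standard Kripke-semantic arguments. For $(2) \Rightarrow (1)$, I would verify that every axiom schema of $\SF$ is valid on any Kripke frame whose accessibility relation is transitive, reflexive, and upward directed. Propositional tautologies and axiom K are valid on every Kripke frame, and necessitation and modus ponens preserve frame validity, so the task reduces to the three modal axioms. The axiom $\BF A \to A$ is immediate from reflexivity, and $\BF A \to \BF \BF A$ follows by chasing $x \accf y \accf z$ through transitivity to $x \accf z$. For $\DF \BF A \to \BF \DF A$, given $x \Vdash \DF \BF A$ witnessed by some $z$ with $x \accf z$ and $z \Vdash \BF A$, and any $y$ with $x \accf y$, upward directedness yields a common $\accf$-successor $u$ of $y$ and $z$; then $u \Vdash A$ by $z \Vdash \BF A$, so $y \Vdash \DF A$, hence $x \Vdash \BF \DF A$.

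For $(1) \Rightarrow (2)$, I would argue by contraposition and, in each case of a failed structural property, construct a valuation falsifying the corresponding axiom at a chosen world. If $\accf$ is not reflexive at some $x \in W$, take $V(p) = W \setminus \{x\}$: every $\accf$-successor of $x$ lies in $V(p)$ because $x \not\accf x$, so $x \Vdash \BF p$ while $x \not\Vdash p$, falsifying axiom T. If $\accf$ is not transitive, choose $x, y, z$ with $x \accf y$, $y \accf z$, and $x \not\accf z$, and set $V(p) = \{u : x \accf u\}$: then $x \Vdash \BF p$ trivially, but $z \notin V(p)$ gives $y \not\Vdash \BF p$, so $x \not\Vdash \BF \BF p$, falsifying axiom 4. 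If $\accf$ is not upward directed, pick $x, y, z$ with $x \accf y$, $x \accf z$ and no common $\accf$-successor of $y$ and $z$, and set $V(p) = \{u : z \accf u\}$: then $z \Vdash \BF p$, so $x \Vdash \DF \BF p$, whereas no $\accf$-successor of $y$ lies in $V(p)$ by hypothesis, hence $y \not\Vdash \DF p$ and $x \not\Vdash \BF \DF p$, falsifying axiom .2.

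The whole argument is routine Sahlqvist-style frame correspondence. The only point needing mild care is the converse for axiom .2, where we cannot presuppose reflexivity or transitivity of $\accf$; the displayed valuation sidesteps this because the required non-overlap between $\accf$-successors of $y$ and elements of $V(p)$ is literally the failure of upward directedness for $y$ and $z$. Since no deeper machinery is involved, the verification can be given in a few lines, or replaced by a direct appeal to the cited reference.
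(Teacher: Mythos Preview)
Your argument is correct and is the standard Sahlqvist-style correspondence proof for $\mathbf{S4.2}$; each of the three frame conditions is handled exactly as one would expect, and the care you take in the $.2$ case (not presupposing reflexivity or transitivity) is appropriate. Note, however, that the paper does not supply its own proof of this proposition at all: it is stated with a citation to Chagrov and Zakharyaschev and left unproved, so there is nothing in the paper to compare your approach against beyond the fact that your final remark (``a direct appeal to the cited reference'') is precisely what the authors do.
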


We say that an $\SF$-frame $(W, \accf)$ is \textit{rooted} iff there exists an element $r \in W$ such that $r \accf x$ for all $x \in W$. 
We call such an element $r$ a \textit{root element} of the frame.
Note that there may not be only one root element in $\SF$-frames.
The logic $\SF$ enjoys the following finite frame property.

\begin{thm}[The finite frame property of $\SF$ (cf.~Chagrov and Zakharyaschev {\cite[Theorem 5.33]{CZ97}})]\label{ffp_SF}
For any $\LF$-formula $A$, the following are equivalent:
\begin{enumerate}
    \item $\SF \vdash A$. 
    \item $A$ is valid on all $\SF$-frames. 
    \item $A$ is valid on all finite rooted $\SF$-frames. 
    \item $A$ is true in all root elements of all finite rooted $\SF$-models. 
\end{enumerate}
\end{thm}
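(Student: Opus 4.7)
The plan is to prove the cycle $(1) \Rightarrow (2) \Rightarrow (3) \Rightarrow (4) \Rightarrow (1)$. The implication $(1) \Rightarrow (2)$ is soundness: each axiom of $\SF$ is validated on any transitive, reflexive, upward-directed frame, with the characteristic $.2$-axiom $\DF \BF A \to \BF \DF A$ precisely corresponding to upward directedness. The implications $(2) \Rightarrow (3)$ and $(3) \Rightarrow (4)$ are immediate, since finite rooted $\SF$-frames form a sub-class of $\SF$-frames and frame validity entails truth at every point of every model over the frame.

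The substantive direction is $(4) \Rightarrow (1)$, which I would prove by contraposition. Assuming $\SF \nvdash A$, I would first build the canonical $\SF$-model $\mc M_c = (W_c, \accf, \Vdash)$, whose points are maximally $\SF$-consistent sets of $\LF$-formulas and with $x \accf y$ iff $\{B : \BF B \in x\} \subseteq y$. A standard verification shows that $\accf$ is reflexive (from $\BF A \to A$), transitive (from $\BF A \to \BF \BF A$), and upward directed (from $\DF \BF A \to \BF \DF A$), so $\mc M_c$ is an $\SF$-model, and by the Henkin construction some $w \in W_c$ refutes $A$.

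To finitize, I would filtrate $\mc M_c$ through the finite set $\Sub(A)$, taking $x \sim y$ iff $x$ and $y$ agree on every formula in $\Sub(A)$. On the finite quotient I would employ the largest admissible filtration $[x] R [y]$ iff for every $\BF B \in \Sub(A)$, $x \Vdash \BF B$ implies $y \Vdash B \land \BF B$. The usual filtration lemma then yields that $A$ still fails at $[w]$, and reflexivity and transitivity of $R$ are inherited. The main obstacle is preserving upward directedness: two $R$-successors of $[x]$ need not admit a common $R$-successor in the bare quotient. I would overcome this by first generating the sub-model on $\{y : w \accf y\}$, which is itself upward directed and still refutes $A$ at $w$, and then, if necessary, adjoining to the filtered frame a single fresh point $*$ sitting $R$-above every equivalence class, with its valuation forced by requiring it to verify exactly those $\BF B \in \Sub(A)$ universally satisfied in the generated sub-model; the filtration lemma extends because the added point only enlarges the set of $R$-successors of each class in a controlled way, matching the $\BF$-subformulas already in $\Sub(A)$.

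Finally, I would pass to the sub-model generated by $[w]$ in this finite directed, reflexive, transitive model, obtaining a finite rooted $\SF$-model whose root $[w]$ refutes $A$, completing $(4) \Rightarrow (1)$. The technical heart is the directedness-preservation step in the filtration; this is where $\mathbf{S4.2}$-style logics demand more care than their $\mathbf{S4}$ analogues, and is where I would expect to invest most of the effort, or else simply invoke the detailed filtration construction recorded in Chagrov and Zakharyaschev.
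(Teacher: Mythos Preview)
The paper does not prove this theorem; it is stated in the preliminaries with a citation to Chagrov and Zakharyaschev, so there is no proof in the paper to compare against. Your outline is the standard route and is essentially correct.

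One remark: your concern that directedness may fail under filtration is unwarranted, and the top-point construction is not needed. Once you pass to the submodel of the canonical model generated by $w$, any two points $y, z$ in it satisfy $w \accf y$ and $w \accf z$, so by directedness of the canonical frame there is $u$ with $y \accf u$ and $z \accf u$. Under the transitive filtration you describe, $y \accf u$ together with transitivity of $\accf$ yields that $y \Vdash \BF B$ implies $u \Vdash \BF B$ for every $\BF B \in \Sub(A)$, hence $[y] \mathrel{R} [u]$; likewise $[z] \mathrel{R} [u]$. Thus the filtered frame is already directed, and in fact rooted at $[w]$. The step you flagged as the technical heart therefore comes for free once you generate before filtrating, and the remaining verifications (reflexivity, transitivity, the filtration lemma) are routine.
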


Moreover, Hamkins and L\"owe proved that $\SF$ is characterized by a class of more well-formed finite rooted $\SF$-frames. 
For any $\SF$-frame $(W, \accf)$ and any $x, y \in W$, we write $x \approx y$ iff $x \accf y$ and $y \accf x$. 
It is easy to see that $\approx$ is an equivalence relation on $W$. 
Then, $\accf$ can be treated as a well-defined binary relation on $W / \approx$.  
We say that an $\SF$-frame is a \textit{pre-Boolean algebra} iff $(W / \approx, \accf)$ forms a Boolean algebra. 



\begin{thm}[Hamkins and L\"owe {\cite[Theorem 11]{HL08}}]\label{PBA}
For any $\LF$-formula $A$, the following are equivalent:
\begin{enumerate}
    \item $\SF \vdash A$. 
    \item $A$ is valid on all finite pre-Boolean algebras. 
    \item $A$ is true in all root elements of all rooted $\SF$-models based on finite pre-Boolean algebra. 
\end{enumerate}
\end{thm}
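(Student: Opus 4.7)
The plan is to prove the cycle of implications $(1) \Rightarrow (2) \Rightarrow (3) \Rightarrow (1)$. For $(1) \Rightarrow (2)$, I observe that any frame $(W, \accf)$ for which $W / \!\approx$ carries a Boolean algebra structure is automatically transitive, reflexive, and upward directed---the Boolean join of any two clusters serves as a common $\accf$-successor---hence is an $\SF$-frame; soundness with respect to $\SF$-frames then yields the validity of every theorem of $\SF$ on it. The implication $(2) \Rightarrow (3)$ is immediate from the definition of frame validity.

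For the substantive direction $(3) \Rightarrow (1)$, I argue by contraposition. Assuming $\SF \nvdash A$, Theorem~\ref{ffp_SF} supplies a finite rooted $\SF$-model $\mc M = (W, \accf, \Vdash)$ and a root element $r \in W$ with $r \not\Vdash A$. The strategy is to exhibit a surjective bounded morphism $f \colon (W', \accf') \to (W, \accf)$ where $(W', \accf')$ is a finite pre-Boolean algebra $\SF$-frame, together with a distinguished root $r' \in W'$ satisfying $f(r') = r$; then lifting the valuation by $x' \Vdash' p \iff f(x') \Vdash p$ and invoking the standard truth-preservation lemma for bounded morphisms gives $r' \not\Vdash' A$, which is exactly what is required.

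To build $(W', \accf')$ and $f$, note that $W / \!\approx$ is a finite directed poset with minimum cluster $[r]$ and, by upward directedness plus finiteness, a unique maximum cluster. The construction proceeds by first finding a finite Boolean algebra $B$ together with an order-preserving surjection $\pi \colon B \to W / \!\approx$ with $\pi(0_B) = [r]$ that satisfies the back condition: whenever $\pi(b) \accf C$ in $W / \!\approx$, there exists $b' \geq_B b$ with $\pi(b') = C$. Given such a $\pi$, I take $W' := \{(b, x) : b \in B,\, x \in \pi(b)\}$, declare $(b, x) \accf' (b', y)$ iff $b \leq_B b'$, set $f(b, x) := x$, and choose $r' := (0_B, r)$. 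The $\approx'$-classes of $W'$ are then indexed precisely by $B$, so $W' / \!\approx'$ inherits the Boolean algebra structure of $B$, and the forth and back conditions for $f$ are immediate from the corresponding properties of $\pi$.

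The main obstacle is the construction of $\pi$, which amounts to realizing an arbitrary finite directed poset with minimum as a bounded-morphic image of a finite Boolean algebra. A natural approach is to take $B$ to be the power set of a sufficiently rich index set---say, one index per ordered pair of clusters in $W / \!\approx$---labeling each atom by a target cluster and defining $\pi(b)$ as an $\accf$-upper bound in $W / \!\approx$ of the labels of atoms below $b$. Upward directedness of $\accf$ on $W$ is exactly what guarantees that such upper bounds always exist, and a sufficiently redundant choice of atoms supplies witnesses for the back condition at every cover, which then extends to all successors by transitivity. Once $\pi$ is secured, verifying transitivity, reflexivity, and upward directedness of $\accf'$, the Boolean structure of $W' / \!\approx'$, and the surjectivity and bounded-morphism conditions for $f$ is routine, and the argument closes.
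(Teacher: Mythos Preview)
The paper does not supply its own proof of this theorem: it is quoted from Hamkins and L\"owe \cite[Theorem 11]{HL08} and used as a black box, so there is nothing in the present paper to compare your argument against.

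On the merits of your proposal itself: the overall architecture is sound, and the reduction via a surjective bounded morphism $f$ from a finite pre-Boolean algebra onto the given finite rooted $\SF$-frame is a legitimate route to the result. The difficulty is entirely concentrated in the construction of the map $\pi \colon B \to W/\!\approx$, and here your sketch is not yet a proof. Declaring ``$\pi(b)$ is an $\accf$-upper bound of the labels of the atoms below $b$'' does not define a function, since a merely directed (non-lattice) poset can have several incomparable upper bounds for the same set; you must specify a choice and then check that this choice is order-preserving and satisfies the back condition. The phrase ``a sufficiently redundant choice of atoms supplies witnesses for the back condition at every cover'' is where the actual combinatorics lives, and it is asserted rather than carried out. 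One clean way to close the gap is to factor the construction: first show that every finite directed partial order with least element is a bounded-morphic image of a finite \emph{lattice} (e.g.\ via a completion), and then that every finite lattice is a bounded-morphic image of a finite Boolean algebra (for instance $\mathcal{P}(L)$ with $\pi(S) = \bigvee S$, where the back condition is witnessed by $S \cup \{q\}$). As written, the proposal identifies the right obstacle but does not yet overcome it.

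For context, Hamkins and L\"owe's own argument proceeds rather differently, via an analysis in terms of independent ``buttons'' and ``switches'' that directly realises finite Boolean-algebra patterns inside $\SF$-frames, rather than through an abstract bounded-morphism construction.
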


We abbreviate the $\LS$-sentence stating ``for any forcing poset $\mathbb{P}$, $\Pforces{\mathbb{P}} \varphi$'' as $\ForceB \varphi$.
Then, $\ForceD \varphi$ means ``there exists some forcing poset $\mathbb{P}$ which does not force $\varphi$''.
It is easily shown that this statement is $\ZFC$-provably equivalent to ``there exists some forcing poset $\mathbb{P}$ which forces $\neg \varphi$''.
The following theorem states that the modal logic of forcing is included in $\SF$.

\begin{thm}[Cf.~{\cite[Theorem 3]{HL08}}]\label{soundness_SF}
Let $\varphi$ and $\psi$ be any $\LS$-sentences. 
\begin{enumerate}
    \item If $\ZFC \vdash \varphi$, then $\ZFC \vdash \ForceB \varphi$. 
    \item $\ZFC$ proves the $\LS$-sentences $\ForceB (\varphi \to \psi) \to (\ForceB \varphi \to \ForceB \psi)$, $\ForceB \varphi \to \varphi$, $\ForceB \varphi \to \ForceB \ForceB \varphi$, and $\ForceD \ForceB \varphi \to \ForceB \ForceD \varphi$. 
\end{enumerate}
\end{thm}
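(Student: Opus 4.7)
The plan is to verify each clause inside $\ZFC$ by unfolding $\ForceB \varphi$ as $\forall \mathbb{P}.\, \Pforces{\mathbb{P}} \varphi$ and, for each $\SF$-axiom, exhibiting a standard construction on forcing posets as the witness. Throughout I would rely only on basic properties of the forcing relation from \cite{Kun80}: that $\Pforces{\mathbb{P}}$ distributes over the propositional connectives, and that $\ZFC \vdash \varphi$ implies $\ZFC \vdash \Pforces{\mathbb{P}} \varphi$ for every forcing poset $\mathbb{P}$ (since every forcing extension of a model of $\ZFC$ is again a model of $\ZFC$).

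Clause (1) and the K-axiom $\ForceB(\varphi \to \psi) \to (\ForceB \varphi \to \ForceB \psi)$ are immediate from these two properties. For the T-axiom $\ForceB \varphi \to \varphi$ I instantiate the universal quantifier over posets at the trivial forcing poset $\{\mathbbm{1}\}$ and use the $\ZFC$-provable equivalence $\Pforces{\{\mathbbm{1}\}} \varphi \leftrightarrow \varphi$. For the 4-axiom $\ForceB \varphi \to \ForceB \ForceB \varphi$, the witnessing construction is two-step iteration: for any forcing poset $\mathbb{P}$ and any $\mathbb{P}$-name $\dot{\mathbb{Q}}$ for a forcing poset, $\mathbb{P} * \dot{\mathbb{Q}}$ is itself a forcing poset, so $\ForceB \varphi$ in particular forces $\varphi$ along every such iteration; applying the standard factorization $\Pforces{\mathbb{P} * \dot{\mathbb{Q}}} \varphi \leftrightarrow \Pforces{\mathbb{P}} \Pforces{\dot{\mathbb{Q}}} \varphi$ and universally quantifying over $\dot{\mathbb{Q}}$ yields $\Pforces{\mathbb{P}} \ForceB \varphi$, hence $\ForceB \ForceB \varphi$.

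The .2-axiom $\ForceD \ForceB \varphi \to \ForceB \ForceD \varphi$ is the clause I expect to require the most care, since it expresses directedness of the forcing multiverse rather than a purely formal property of $\Pforces{\mathbb{P}}$. Fixing a poset $\mathbb{P}$ with $\Pforces{\mathbb{P}} \ForceB \varphi$ and an arbitrary poset $\mathbb{Q}$, I want $\mathbb{Q}$ to force that some further poset forces $\varphi$. The natural witness is the product $\mathbb{P} \times \mathbb{Q}$, whose generics factor symmetrically: reading such a generic as $\mathbb{P}$ first followed by $\mathbb{Q}$, the hypothesis on $\mathbb{P}$ gives $\varphi$ in the full product extension; reading it as $\mathbb{Q}$ first then exhibits $\mathbb{P}$, viewed in the $\mathbb{Q}$-extension, as a further forcing under which $\varphi$ holds. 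Formally this reduces to the $\ZFC$-provable identity $\Pforces{\mathbb{P}} \Pforces{\mathbb{Q}} \varphi \leftrightarrow \Pforces{\mathbb{P} \times \mathbb{Q}} \varphi \leftrightarrow \Pforces{\mathbb{Q}} \Pforces{\mathbb{P}} \varphi$ from product forcing, which is where the real work lives and which I would import as a black box from the standard references.
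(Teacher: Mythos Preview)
Your proposal is correct and follows exactly the standard Hamkins--L\"owe argument: trivial forcing for $\mathsf{T}$, two-step iteration for $\mathsf{4}$, and product forcing for $.2$. The paper itself does not supply a proof of this theorem at all---it is imported as a black box from \cite[Theorem 3]{HL08}---so your sketch is precisely the argument the cited reference gives, and there is nothing further to compare.
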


As in the case of the modal logic of provability, we can uniquely extend a translation $g$, which is a mapping from the set of all propositional variables to a set of $\LS$-sentences, to the mapping $\gf$ from the set of all $\LF$-formulas to a set of $\LS$-sentences by the following clauses: 
\begin{enumerate}
    \item $\gf$ commutes with each propositional connective, 
    \item $\gf(\BF A)$ is $\ForceB \gf(A)$. 
\end{enumerate}
Then, Hamkins and L\"owe proved that the modal logic of forcing is exactly the modal logic $\SF$. 

\begin{thm}[Hamkins and L\"owe {\cite{HL08}}] \label{completeness_SF}
Suppose that $\ZFC$ is consistent. 
For any $\LF$-formula $A$, the following are equivalent: 
\begin{enumerate}
    \item $\SF \vdash A$. 
    \item $\ZFC \vdash \gf(A)$ for all translations $g$. 
\end{enumerate}
\end{thm}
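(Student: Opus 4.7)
The plan is to separate the two implications. The direction $(1) \Rightarrow (2)$ is a routine soundness argument by induction on the length of a proof in $\SF$: each axiom of $\SF$ translates under $\gf$ to a $\ZFC$-provable $\LS$-sentence by Theorem \ref{soundness_SF}, while modus ponens and necessitation (using clause (1) of Theorem \ref{soundness_SF}) preserve $\ZFC$-provability. The remaining direction $(2) \Rightarrow (1)$ is the substantive one, and I will argue contrapositively: from $\SF \nvdash A$ I want to exhibit a translation $g$ together with a model of $\ZFC$ in which $\gf(A)$ fails.

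To begin, I invoke Theorem \ref{PBA} to extract a finite pre-Boolean algebra $\mc F = (W, \accf)$, a root element $r \in W$, and a valuation on $\mc F$ under which $A$ fails at $r$. The strategy is to mimic this Kripke countermodel inside the forcing multiverse of a fixed countable transitive model $M$ of $\ZFC$: I will define $g$ so that, for every subformula $B$ of $A$ and every forcing extension $M' = M[G]$, the truth of $\gf(B)$ in $M'$ matches the truth of $B$ at the node of $\mc F$ corresponding to $M'$, with $M$ itself corresponding to $r$.

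The encoding rests on the standard devices of \emph{buttons} and \emph{switches}. A switch is an $\LS$-sentence $s$ for which, in every forcing extension, both $\ForceD s$ and $\ForceD \neg s$ hold, so its truth value can be flipped freely. A button is a sentence $b$ that is forceable ($\ForceD b$) and, once made true, remains true in all further extensions. Since the quotient $\mc F/{\approx}$ is a finite Boolean algebra of some size $2^n$, I use $n$ independent buttons to distinguish which $\approx$-class the current extension occupies, and a finite supply of independent switches to record the valuation of the Kripke model at each node. The translation $g$ is then defined by sending each propositional variable of $A$ to a Boolean combination of these flags that evaluates, in the extension encoding a node $w \in W$, to the assigned truth value at $w$. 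A straightforward induction on the structure of subformulas of $A$ shows that $\gf(B)$ holds in $M'$ iff $B$ holds at the node of $\mc F$ coded by $M'$; in particular $\gf(A)$ fails at $M$, so $\ZFC \nvdash \gf(A)$.

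The main obstacle is the construction inside an arbitrary model of $\ZFC$ of a sufficiently rich, mutually independent family of buttons and switches, together with the verification that forcing extensions can realize every transition between flag-states needed to simulate $\mc F/{\approx}$. This is the forcing-theoretic heart of the argument: candidate buttons and switches are typically drawn from sources such as failures of the GCH at specific cardinals or constraints on the values of $2^{\aleph_n}$, and one must check that pushing one button or flipping one switch never inadvertently constrains the others, and that every desired Boolean combination of button-and-switch states is reachable by some forcing. Once this combinatorial core is in place, the reduction via Theorem \ref{PBA} turns the remainder of the proof into a translation exercise between Kripke semantics and the forcing multiverse.
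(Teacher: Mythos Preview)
Your proposal is correct and follows the same Hamkins--L\"owe approach that the paper cites and packages abstractly via Theorem \ref{HLS}: the HL sentences $\{\chi_a\}_{a \in W}$ are precisely the button/switch labelling you describe, and the translation $g(p) = \bigvee_{a \Vdash p} \chi_a$ together with the induction on subformulas is exactly your simulation argument. One minor point worth tightening: the hypothesis is only that $\ZFC$ is consistent, which does not by itself supply a countable transitive model; the paper's syntactic packaging avoids this by proving $\ZFC + \mathrm{V} = \CU \vdash \chi_r$ (clause (5) of Theorem \ref{HLS}) and hence $\ZFC + \mathrm{V} = \CU \vdash \neg \gf(A)$, so that mere consistency already gives $\ZFC \nvdash \gf(A)$.
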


As in the proof of Solovay's theorem, Theorem \ref{completeness_SF} is proved by embedding finite pre-Boolean algebras into $\ZFC$ by using the following theorem which is an analogue of Theorem \ref{SS}. 

\begin{thm}[Hamkins and L\"owe]\label{HLS}
For any finite pre-Boolean algebra $\mc F = (W, \accf)$ with a root element $r$, there exist $\LS$-sentences $\{\chi_a\}_{a \in W}$ satisfying the following conditions: for $a, b \in W$,
\begin{enumerate}
    \item if $a \neq b$, then $\ZFC \vdash \chi_a \to \neg \chi_b$, 
    \item $\ZFC \vdash \bigvee_{c \in W} \chi_c$,
    \item $\ZFC \vdash \chi_a \to \ForceB \bigvee_{a \accf c} \chi_c$, 
    \item if $a \accf b$, then $\ZFC \vdash \chi_a \to \ForceD \chi_b$, 
    \item $\ZFC + \mathrm{V = L} \vdash \chi_r$. 
    \item $\ZFC + \mathrm{V = L} \vdash \ForceD \chi_a$. 
\end{enumerate}
We call such sentences $\{\chi_a\}_{a \in W}$ \textit{HL sentences of $\mc F$}. 
\end{thm}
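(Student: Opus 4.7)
The plan is to adapt the ``buttons and switches'' machinery of Hamkins and L\"owe. Since $(W, \accf)$ is a pre-Boolean algebra, the quotient $W/{\approx}$ is a finite Boolean algebra, hence isomorphic to the powerset $\mathcal{P}(I)$ for some finite $I$, with $[a] \accf [b]$ corresponding to $B_a \subseteq B_b$, where $B_a \subseteq I$ encodes the equivalence class $[a]$. Within each equivalence class, the distinct worlds are distinguished by a further finite tuple of binary parameters; fix a uniform encoding assigning to each $a \in W$ a string $\sigma_a \in \{0,1\}^m$. Since $r$ is a root, $[r]$ is the bottom element of the Boolean algebra, so $B_r = \emptyset$, and we arrange the labeling so that $\sigma_r = (0,\dots,0)$.

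The next step, working in $\ZFC + \mathrm{V} = \mathrm{L}$, is to secure an independent family of $|I|$ \emph{buttons} $\pi_i$ ($i \in I$) together with $m$ \emph{switches} $\tau_j$ ($j < m$). A button is an $\LS$-sentence false in $L$ which can be forced true and, once true, is $\ZFC$-provably preserved under any further forcing; a switch is an $\LS$-sentence whose truth value can be flipped in either direction by forcing, regardless of its current state. Independence means that any target configuration of buttons (a subset $B \subseteq I$ containing the currently pushed ones) together with any desired switch vector in $\{0,1\}^m$ is realizable by a single forcing extension, and these operations do not interfere. Granting such a family, define for each $a \in W$
\[
\chi_a \;:=\; \bigwedge_{i \in B_a} \pi_i \,\land\, \bigwedge_{i \in I \setminus B_a} \neg \pi_i \,\land\, \bigwedge_{j < m} \tau_j^{\sigma_a(j)},
\]
where $\tau_j^1 := \tau_j$ and $\tau_j^0 := \neg \tau_j$.

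It remains to verify the six conditions. Mutual exclusivity (1) and the covering (2) are immediate, since distinct $a \in W$ give different button-switch configurations and the finitely many configurations labeled by $W$ exhaust the possibilities on the nose. For (5), in $L$ no button is pushed and all switches are in the default value $0$, matching $(B_r, \sigma_r) = (\emptyset, \vec 0)$. For (6), independence of buttons and switches lets us force from $L$ any chosen $(B_a, \sigma_a)$. For (3), once $\chi_a$ holds, the pushed buttons can only grow in further extensions while switches may move freely; the reachable button-configurations are precisely the $B \supseteq B_a$, which at the quotient level correspond to $\{[c] : a \accf c\}$, and the intra-class variation is covered by the switches. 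For (4), given $a \accf b$ we have $B_b \supseteq B_a$, so the missing buttons can be pushed and the switches re-tuned to $\sigma_b$ simultaneously, witnessing $\ForceD \chi_b$.

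The main obstacle is the construction of the independent family of buttons and switches inside $L$: this is the technical heart of the Hamkins--L\"owe argument, and requires concrete set-theoretic coding, typically via statements about cardinal arithmetic at a carefully chosen sequence of cardinals (irreversible large-cardinal-style collapses for buttons, GCH-patterns for switches) together with preservation and independence lemmas under product forcing. Once the family is in place, the combinatorial verification of (1)--(6) is routine bookkeeping, and the whole construction survives into arbitrary models of $\ZFC + \mathrm{V} = \mathrm{L}$ because each button/switch property is expressed by a $\ZFC$-provable modal formula applied to the $\pi_i$ and $\tau_j$.
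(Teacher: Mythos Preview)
The paper does not prove this theorem: it is stated as a result of Hamkins and L\"owe and cited from~\cite{HL08}, so there is no ``paper's own proof'' to compare against. Your sketch does track the Hamkins--L\"owe buttons-and-switches idea, but it has a genuine gap.

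The problem is your claim that ``the finitely many configurations labeled by $W$ exhaust the possibilities on the nose.'' Your encoding $a \mapsto (B_a, \sigma_a) \in \mathcal{P}(I) \times \{0,1\}^m$ is injective by construction, but nothing forces it to be surjective: a general finite pre-Boolean algebra need not have $\approx$-classes of uniform size $2^m$. If some class $[c]$ has fewer than $2^m$ elements, then there is a button/switch configuration $(B_c, \sigma)$ not equal to any $(B_a, \sigma_a)$, and in a forcing extension realizing that configuration no $\chi_a$ holds. This breaks condition~(2) outright, and for the same reason condition~(3) fails: from $\chi_a$ one can force into such an unlabeled configuration, so $\ForceB \bigvee_{a \accf c} \chi_c$ is not provable.

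The standard repair is one of two moves. Either (i) observe that every finite pre-Boolean algebra is a p-morphic image of one of the special form $\mathcal{P}(I) \times \{0,1\}^m$ (each class of size exactly $2^m$), build the sentences for the latter, and then take $\chi_a$ to be the disjunction over the preimage of $a$; or (ii) directly define each $\chi_a$ as a disjunction over a block of switch-configurations, partitioning $\{0,1\}^m$ among the elements of each $\approx$-class. Hamkins and L\"owe essentially take route~(i). Once you make this correction, the rest of your verification of (1)--(6) goes through as written.
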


\section{The bimodal logic $\PF$ of provability and forcing}\label{sec_pf}

In this section, we introduce our bimodal logic $\PF$ and prove that all theorems of $\PF$ are verifiable in $\ZFC$. 
Let $\LPF$ denote the language of bimodal propositional logic with two unary modal operators $\BP$ and $\BF$.

\begin{defn}
The logic $\PF$ in the language $\LPF$ is defined as follows: 
    \begin{itemize}
        \item The axioms of $\PF$ are:
        \begin{enumerate}
            \item All propositional tautologies in the language $\LPF$. 
            \item All axioms of $\GL$. 
            \item All axioms of $\SF$.  
            \item $\BP A \to \BF \BP A$. 
            \item $\DP A \to \BF \DP A$. 
            \item $\BP A \to \BP \BF A$. 
            \end{enumerate}
        \item The inference rules of $\PF$ are modus ponens and necessitation rules $\dfrac{A}{\BP A}$ and $\dfrac{A}{\BF A}$. 
    \end{itemize}
\end{defn}

The $\ZFC$-verifiability of the $\GL$ and $\SF$-parts of $\PF$ follows from Theorems \ref{DC} and \ref{soundness_SF}. 
The last three interaction axioms for $\BP$ and $\BF$ of the logic $\PF$ are justified by the following two propositions.

\begin{prop}\label{prop_arith}
For any arithmetical $\LS$-sentence $\varphi$, we have $\ZFC \vdash \varphi \to \ForceB \varphi$. 
\end{prop}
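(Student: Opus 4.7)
The plan is to derive this directly from the forcing-absoluteness of arithmetical sentences stated as Proposition \ref{arithAbsolute_forcing}. Recall that $\ForceB \varphi$ unwinds to the $\LS$-sentence ``$\forall \mathbb{P}$ a forcing poset, $\Pforces{\mathbb{P}} \varphi$'', so it suffices to show, reasoning in $\ZFC$, that $\varphi$ implies $\Pforces{\mathbb{P}} \varphi$ for every forcing poset $\mathbb{P}$.

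First I would reason inside $\ZFC$: assume $\varphi$, and fix an arbitrary forcing poset $\mathbb{P}$. By Proposition \ref{arithAbsolute_forcing}, since $\varphi$ is arithmetical, we have $\varphi \leftrightarrow \Pforces{\mathbb{P}} \varphi$; combining this with the hypothesis $\varphi$ gives $\Pforces{\mathbb{P}} \varphi$. Since $\mathbb{P}$ was arbitrary, we conclude $\forall \mathbb{P} : \text{forcing poset}.\; \Pforces{\mathbb{P}} \varphi$, which is exactly $\ForceB \varphi$. Discharging the assumption yields $\varphi \to \ForceB \varphi$, and hence $\ZFC$ proves this implication.

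There is essentially no obstacle: the content of the proposition is just a restatement, packaged via the $\ForceB$-abbreviation, of the well-known fact that arithmetical (in fact any $\Delta_0$-with-parameter-$\omega$) statement is absolute between the ground model and all of its generic extensions because forcing does not alter $\omega$. The only thing to be a little careful about is the formalization in $\ZFC$: Proposition \ref{arithAbsolute_forcing} already asserts the universally quantified biconditional over all forcing posets inside $\ZFC$, so no further metatheoretic step is needed.
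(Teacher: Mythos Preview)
Your proof is correct and takes exactly the same approach as the paper, which simply says the result is immediate from Proposition~\ref{arithAbsolute_forcing}. You have merely spelled out the one-line inference in detail.
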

\begin{proof}
This is immediate from Proposition \ref{arithAbsolute_forcing}. 
\end{proof}

\begin{prop}\label{prop_forcing}
For any $\LS$-sentence $\varphi$, we have $\ZFC \vdash \Prov \varphi \to \Prov \ForceB \varphi$. 
\end{prop}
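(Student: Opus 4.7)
The plan is to observe that Theorem~\ref{soundness_SF}(1)---the meta-theoretic statement ``$\ZFC \vdash \psi$ implies $\ZFC \vdash \ForceB \psi$''---has a formalized counterpart provable inside $\ZFC$. Once we have that formalized counterpart, the required $\ZFC \vdash \Prov \varphi \to \Prov \ForceB \varphi$ follows by specialization to $\psi = \varphi$.

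First, I would inspect the standard meta-proof of Theorem~\ref{soundness_SF}(1) (see \cite{Jec03,Kun80}): it proceeds by induction on the length of a $\ZFC$-proof of $\psi$. For each axiom $\alpha$ of $\ZFC$ (including the Separation and Replacement schemas) one verifies $\ZFC \vdash \ForceB \alpha$ via the forcing theorem, and one checks that modus ponens and generalization preserve the property of being $\ZFC$-provably forced. This analysis yields a primitive recursive function $f$ transforming any $\ZFC$-proof of $\psi$ into a $\ZFC$-proof of $\ForceB \psi$. Since $f$ is primitive recursive and its correctness is verifiable by a finitary induction on proofs, the statement ``$f$ preserves proofs'' is provable in $\ZFC$, so we obtain
\[ \ZFC \vdash \forall \psi \,(\Prov \psi \to \Prov \ForceB \psi). \]
Instantiating at $\psi = \varphi$ then gives the desired conclusion.

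The hard part is checking that the meta-proof of Theorem~\ref{soundness_SF}(1) is sufficiently uniform to yield a genuine primitive recursive transformation of proofs. Axiom schema by axiom schema, one must verify that the forcing-theoretic derivations of $\ForceB \alpha$ depend primitive recursively on $\alpha$; in particular, the treatment of Separation and Replacement must be given by a single uniform prescription rather than case-by-case. This is standard forcing folklore and presents no difficulty of principle, but it is typically left implicit in set-theory textbooks, so the cleanest writeup will cite the usual sources for the forcing theorem and note that the induction involved in Theorem~\ref{soundness_SF}(1) is formalizable in $\ZFC$.
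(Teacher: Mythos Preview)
Your proposal is correct and follows essentially the same approach as the paper: the paper's proof is the single sentence ``This proposition is obtained by proceeding a proof of the first statement of Proposition~\ref{soundness_SF} in $\ZFC$,'' which is precisely your plan of formalizing the meta-proof of Theorem~\ref{soundness_SF}(1) inside $\ZFC$. Your additional remarks about the primitive recursive proof transformation and the uniform treatment of schemas simply flesh out what the paper leaves implicit.
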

\begin{proof}
    This proposition is obtained by proceeding a proof of the first statement of Proposition \ref{soundness_SF} in $\ZFC$. 
\end{proof}

As in the previous section, every translation $g$ can be uniquely extended to the mapping $\gpf$ from the set of all $\LPF$-formulas to a set of $\LS$-sentences by the following clauses: 
\begin{enumerate}
    \item $\gf$ commutes with each propositional connective, 
    \item $\gpf(\BP A)$ is $\Prov \gpf(A)$,  
    \item $\gpf(\BF A)$ is $\ForceB \gpf(A)$. 
\end{enumerate}

Since $\Prov \varphi$ and $\Con \varphi$ are arithmetical, by Propositions \ref{prop_arith} and \ref{prop_forcing}, we obtain the soundness of $\PF$ with respect to $\ZFC$-verifiability. 

\begin{prop}\label{soundness_PF}
For any $\LPF$-formula $A$ and any translation $g$, if $\PF \vdash A$, then $\ZFC \vdash \gpf(A)$.
\end{prop}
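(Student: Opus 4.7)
The plan is to argue by induction on the length of a $\PF$-derivation of $A$, showing for each axiom and each inference rule of $\PF$ that its translation is $\ZFC$-provable. Since translations commute with the propositional connectives and since $\gpf(\BP B)$ is $\Prov \gpf(B)$ and $\gpf(\BF B)$ is $\ForceB \gpf(B)$, verifying the axioms reduces to checking schemata about $\Prov$ and $\ForceB$ inside $\ZFC$.

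First I would handle the axiom schemata coming from the three component logics. Propositional tautologies translate to propositional tautologies in $\LS$ and so are $\ZFC$-provable. For the $\GL$-axioms $\BP(A \to B) \to (\BP A \to \BP B)$ and $\BP(\BP A \to A) \to \BP A$, the translations are exactly the formalizations provided by clauses (2) and (4) of Theorem \ref{DC}. For the $\SF$-axioms, the translations are precisely the schemata guaranteed by clause (2) of Theorem \ref{soundness_SF}, which gives the $K$-axiom, reflexivity, 4, and the $.2$-axiom for $\ForceB$.

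Next I would dispatch the three interaction axioms. For $\BP A \to \BF \BP A$, setting $\varphi := \gpf(A)$, the translation is $\Prov \varphi \to \ForceB \Prov \varphi$; since $\Prov \varphi$ is arithmetical, this follows from Proposition \ref{prop_arith}. The case $\DP A \to \BF \DP A$ is the same argument applied to the arithmetical sentence $\Con \varphi$ (which is $\neg \Prov \neg \varphi$). The translation of $\BP A \to \BP \BF A$ is $\Prov \varphi \to \Prov \ForceB \varphi$, which is exactly the statement of Proposition \ref{prop_forcing}.

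Finally I would treat the rules. Modus ponens is preserved because $\ZFC$ is closed under modus ponens. The necessitation rule for $\BP$ transforms an induction hypothesis $\ZFC \vdash \gpf(A)$ into $\ZFC \vdash \Prov \gpf(A)$ via clause (1) of Theorem \ref{DC}, and the necessitation rule for $\BF$ transforms it into $\ZFC \vdash \ForceB \gpf(A)$ via clause (1) of Theorem \ref{soundness_SF}. Every step being routine, there is no genuine obstacle here: all the substantive work has been packaged into Propositions \ref{prop_arith} and \ref{prop_forcing} and the soundness theorems for $\GL$ and $\SF$, and the point of the proposition is essentially to record that the interaction axioms of $\PF$ were chosen precisely so that these two propositions suffice.
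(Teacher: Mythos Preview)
Your proposal is correct and is exactly the argument the paper has in mind: the paper's proof is the single sentence ``Since $\Prov \varphi$ and $\Con \varphi$ are arithmetical, by Propositions \ref{prop_arith} and \ref{prop_forcing}, we obtain the soundness of $\PF$ with respect to $\ZFC$-verifiability,'' together with the earlier remark that the $\GL$- and $\SF$-parts follow from Theorems \ref{DC} and \ref{soundness_SF}. You have simply unpacked this into the explicit induction on derivations, with no deviation in approach.
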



We prove the following proposition concerning HL sentences of finite pre-Boolean algebras, which will be used in our proofs in Sections \ref{sec_mt} and \ref{sec_pfo}.

\begin{prop}\label{con_HL}
Let $\mc F = (W, \accf)$ be any finite pre-Boolean algebra and $\{\chi_a\}_{a \in W}$ be HL sentences of $\mc F$. 
Then, for any arithmetical $\LS$-sentence $\varphi$ and $a \in W$, we have $\ZFC \vdash \Con \varphi \to \Con (\varphi \land \chi_a)$. 
\end{prop}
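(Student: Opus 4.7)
\bigskip

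\noindent\emph{Proof plan.} The strategy is to prove the contrapositive: $\ZFC \vdash \Prov \neg(\varphi \land \chi_a) \to \Prov \neg \varphi$. The intuition is that any model $M \vDash \ZFC + \varphi$ can be modified to a model of $\ZFC + \varphi \land \chi_a$ by first passing to its inner model $\CU^M$ (which preserves $\varphi$, since $\varphi$ is arithmetical) and then taking a forcing extension that satisfies $\chi_a$ (which exists inside $\CU^M$ by HL sentence property (6)) and in which $\varphi$ still holds (since $\varphi$ is arithmetical and hence absolute under forcing). The task is to internalize this argument inside $\ZFC$.

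Reasoning inside $\ZFC$, I would assume $\Prov(\varphi \to \neg \chi_a)$ and aim to derive $\Prov \neg \varphi$. First, by Proposition \ref{prop_forcing} applied to $\varphi \to \neg \chi_a$, one obtains $\Prov \ForceB(\varphi \to \neg \chi_a)$, and then the formalized $\BF$-distribution from Theorem \ref{soundness_SF} yields $\Prov(\ForceB \varphi \to \ForceB \neg \chi_a)$. Second, since $\varphi$ is arithmetical, Proposition \ref{prop_arith} with necessitation gives $\Prov(\varphi \to \ForceB \varphi)$; combining these produces $\Prov(\varphi \to \ForceB \neg \chi_a)$. Third, property (6) of HL sentences, $\ZFC + \mathrm{V} = \CU \vdash \ForceD \chi_a$, gives $\Prov(\mathrm{V} = \CU \to \neg \ForceB \neg \chi_a)$. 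Pure propositional manipulation of these provability statements then yields $\Prov(\mathrm{V} = \CU \to \neg \varphi)$.

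It remains to remove the hypothesis $\mathrm{V} = \CU$. I would do this by relativizing to $\CU$: the standard formalized fact that proofs relativize to the constructible universe, $\Prov \psi \to \Prov \psi^{\CU}$, applied to $\mathrm{V} = \CU \to \neg \varphi$, gives $\Prov((\mathrm{V} = \CU)^{\CU} \to (\neg \varphi)^{\CU})$. Since $\ZFC \vdash (\mathrm{V} = \CU)^{\CU}$, necessitation together with propositional logic produces $\Prov((\neg \varphi)^{\CU})$, i.e.\ $\Prov(\neg \varphi^{\CU})$. Finally, Proposition \ref{arithAbsolute_forcing_L} together with necessitation gives $\Prov(\varphi \leftrightarrow \varphi^{\CU})$, which combines with the previous line to yield $\Prov \neg \varphi$, completing the contrapositive.

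The main obstacle is the step of invoking the formalized relativization principle $\Prov \psi \to \Prov \psi^{\CU}$ in a clean way: this is a standard fact, resting on $\ZFC \vdash \psi^{\CU}$ for each axiom $\psi$ of $\ZFC$, but must be used carefully since it is not stated explicitly in the preliminaries. Once this bridge is in place the remaining work consists of routine manipulations with the derivability conditions for $\Prov$ and the $\SF$-style principles for $\ForceB$ that were recorded in Theorem \ref{soundness_SF} and Propositions \ref{prop_arith}, \ref{prop_forcing}, and \ref{arithAbsolute_forcing_L}.
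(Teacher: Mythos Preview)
Your argument is correct and rests on the same mathematical core as the paper's, but the presentations differ. The paper argues semantically inside $\ZFC$: from $\Con\varphi$ it invokes the formalized completeness theorem to obtain a model $M\vDash\ZFC+\varphi$, passes to the inner model $\CU^M$ (which still satisfies $\varphi$ by Proposition~\ref{arithAbsolute_forcing_L} and also satisfies $\mathrm{V}=\CU$), applies HL property~(6) there to get $\CU^M\vDash\ForceD\chi_a$, combines this with $\CU^M\vDash\ForceB\varphi$ from Proposition~\ref{prop_arith} to obtain $\CU^M\vDash\ForceD(\varphi\wedge\chi_a)$, and finally concludes $\Con(\varphi\wedge\chi_a)$ via the contrapositive of Proposition~\ref{prop_forcing}. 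In other words, the paper literally carries out the informal model-theoretic argument you sketched in your opening paragraph, treating it directly as reasoning inside $\ZFC$ rather than converting it into manipulations of $\Prov$.

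The only substantive divergence is how the passage to $\CU$ is handled: you invoke the formalized relativization principle $\Prov\psi\to\Prov\psi^{\CU}$, while the paper invokes the formalized completeness theorem together with the fact that $\CU^M\vDash\ZFC+\mathrm{V}=\CU$. Both rest on the same underlying ingredient (that relativization to $\CU$ preserves $\ZFC$-provability, in formalized form), and neither is stated explicitly in the preliminaries, so neither route is more self-contained than the other. Your syntactic version avoids any talk of models-inside-models; the paper's semantic version is shorter and arguably more transparent, which is presumably why it was chosen.
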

\begin{proof}
We argue in $\ZFC$. 
Suppose that $\Con \varphi$ holds. 
By the completeness theorem, we obtain a model $M$ of $\ZFC$ such that $M \vDash \varphi$. 
Since $\varphi$ is arithmetical, we have $\CU^M \vDash \varphi$ by Proposition \ref{arithAbsolute_forcing_L}. 
Since $\CU^M \vDash \ZFC + \mathrm{V} = \CU$ and $\ZFC + \mathrm{V} = \CU \vdash \ForceD \chi_a$ by Theorem \ref{HLS}.(6), we obtain $\CU^M \vDash \varphi \land \ForceD \chi_a$. 
By Proposition \ref{prop_arith}, we get $\CU^M \vDash \ForceB \varphi \land \ForceD \chi_a$, and hence $\CU^M \vDash \ForceD (\varphi \land \chi_a)$. 
We obtain $\Con \ForceD (\varphi \land \chi_a)$. 
By Proposition \ref{prop_forcing}, we conclude $\Con (\varphi \land \chi_a)$. 
\end{proof}

\section{Kripke semantics}\label{sec_ks}

In this section, we develop Kripke semantics of $\PF$. 
In particular, we prove the finite frame property of $\PF$ with respect to some class of well-formed finite $\PF$-frames. 

Since the language $\LPF$ has the two modal operators $\BP$ and $\BF$, Kripke frames of $\PF$ have two binary relations $\accp$ and $\accf$ corresponding to these operators respectively. 
As above, a Kripke frame $\mc F = (W, \accp, \accf)$ is called a \textit{$\PF$-frame} iff all theorems of $\PF$ are valid on $\mc F$. 
As in the cases of $\GL$ and $\SF$, the validity of $\PF$ on frames can be characterized by conditions of $\accp$ and $\accf$. 

\begin{thm}\label{fc}
Let $\mc F = (W, \accp, \accf)$ be any Kripke frame and $q$ be any propositional variable. 
\begin{enumerate}
    \item $\mc F \vDash \BP q \to \BF \BP q$ iff $\forall x, y, z \in W\, (x \accf y \accp z \Rightarrow x \accp z)$. 
    \item $\mc F \vDash \DP q \to \BF \DP q$ iff $\forall x, y, z \in W\, (x \accf y\ \&\ x \accp z \Rightarrow y \accp z)$. 
    \item $\mc F \vDash \BP q \to \BP \BF q$ iff $\forall x, y, z \in W\, (x \accp y \accf z \Rightarrow x \accp z)$. 
\end{enumerate}
\end{thm}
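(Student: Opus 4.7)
The plan is to treat each of the three equivalences independently as a standard Sahlqvist-style correspondence. Each direction is routine, so I will keep the proof to checking the semantics of $\BP$, $\DP$, and $\BF$ and choosing the right valuation in the contrapositive direction.

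For the soundness direction $(\Leftarrow)$ in each part, I assume the stated relational condition and verify the axiom at an arbitrary world $x$ under an arbitrary valuation $\Vdash$ by unwinding the satisfaction clauses. In Part (1), assuming $x \Vdash \BP q$ and picking $y$ with $x \accf y$ and $z$ with $y \accp z$, the relational condition gives $x \accp z$, hence $z \Vdash q$. In Part (2), assuming $x \Vdash \DP q$ and picking $y$ with $x \accf y$, I grab a witness $z$ with $x \accp z$ and $z \Vdash q$; the relational condition yields $y \accp z$, witnessing $y \Vdash \DP q$. In Part (3), assuming $x \Vdash \BP q$ and picking $y$ with $x \accp y$ and $z$ with $y \accf z$, the condition gives $x \accp z$, hence $z \Vdash q$. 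Each computation closes in one line once the relational implication is applied.

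For the completeness direction $(\Rightarrow)$ in each part, I argue contrapositively: assuming the relational condition fails via some triple $(x, y, z)$, I build a valuation refuting the axiom at $x$. In Parts (1) and (3) the same "large" valuation works, namely let $w \Vdash q$ iff $x \accp w$. Then $x \Vdash \BP q$ trivially. In Part (1), since $x \accf y$, $y \accp z$, and $\neg(x \accp z)$, we have $z \not\Vdash q$, so $y \not\Vdash \BP q$ and hence $x \not\Vdash \BF \BP q$. In Part (3), since $x \accp y \accf z$ and $\neg(x \accp z)$, we have $z \not\Vdash q$, so $y \not\Vdash \BF q$ and hence $x \not\Vdash \BP \BF q$. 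In Part (2) I use a "small" valuation instead: let $w \Vdash q$ iff $w = z$. Since $x \accp z$ and $z \Vdash q$, we get $x \Vdash \DP q$; but $\neg(y \accp z)$ and $z$ is the only $q$-point, so $y \not\Vdash \DP q$, and since $x \accf y$ this gives $x \not\Vdash \BF \DP q$.

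There is no genuine obstacle here; the only mild subtlety is that Part (2) requires a different valuation from Parts (1) and (3) because the antecedent is a $\DP$-formula rather than a $\BP$-formula, so one wants $q$ to hold at as few points as possible (so that the $\DP$-witness at $x$ fails to be an $\accp$-successor of $y$), whereas in Parts (1) and (3) one wants $q$ to be the set of $\accp$-successors of $x$ (so that $\BP q$ holds at $x$ for free and can be broken at a single carefully chosen successor). The entire proof is mechanical once these choices are recorded.
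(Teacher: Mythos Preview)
Your proposal is correct and matches exactly what the paper intends: the paper's own proof reads in full ``These clauses are easily shown. Proofs are left to the reader.'' Your Sahlqvist-style argument with the canonical valuations ($q$ true at the $\accp$-successors of $x$ for Parts (1) and (3), and $q$ true only at the single witness $z$ for Part (2)) is precisely the routine verification the authors had in mind.
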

\begin{proof}
These clauses are easily shown. 
Proofs are left to the reader. 
\end{proof}

\begin{cor}
For any Kripke frame $\mc F = (W, \accp, \accf)$, $\mc F$ is a $\PF$-frame if and only if $\accp$ is transitive and conversely well-founded, $\accf$ is reflexive, transitive, and upward directed, and the right hand side of each clause of Theorem \ref{fc} holds. 
\end{cor}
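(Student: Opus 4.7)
The plan is to verify both directions by simply combining the three frame-characterization results already stated in the paper: the frame characterization of $\GL$, the frame characterization of $\SF$, and the three clauses of Theorem \ref{fc}. There is essentially no new content here beyond bookkeeping.

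For the forward direction, I would suppose $\mc F$ is a $\PF$-frame. Since every theorem of $\PF$ is then valid on $\mc F$, in particular every $\GL$-axiom---which is an $\LP$-formula, hence depends only on the reduct $(W, \accp)$---is valid. By the frame characterization of $\GL$, this forces $\accp$ to be transitive and conversely well-founded. The same argument applied to the $\SF$-axioms for $\BF$ yields that $\accf$ is reflexive, transitive, and upward directed. Finally, the three interaction axioms are valid on $\mc F$, so by Theorem \ref{fc} the three corresponding relational conditions hold.

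For the backward direction, I would suppose that $\accp$ and $\accf$ satisfy the listed conditions and show that every theorem of $\PF$ is valid on $\mc F$. Since validity on a frame is preserved by modus ponens and by both necessitation rules, it suffices to check the axioms. Propositional tautologies are trivially valid. The $\GL$-axioms for $\BP$ involve only $\accp$, and by hypothesis $(W, \accp)$ meets the frame characterization of $\GL$, so these axioms are valid; analogously for the $\SF$-axioms for $\BF$ and $\accf$. The three interaction axioms are valid by Theorem \ref{fc}.

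The only mild subtlety is that Theorem \ref{fc} is stated for the single-variable instances $\BP q \to \BF \BP q$, $\DP q \to \BF \DP q$, and $\BP q \to \BP \BF q$, whereas the axioms of $\PF$ are the corresponding schemes in arbitrary $\LPF$-formulas $A$. This is handled by the standard substitution observation: given a valuation $\Vdash$ and a formula $A$, defining an auxiliary valuation $\Vdash'$ by $x \Vdash' q \iff x \Vdash A$ (and agreeing with $\Vdash$ on the other variables) reduces the truth of, say, $\BP A \to \BF \BP A$ at $x$ under $\Vdash$ to the truth of $\BP q \to \BF \BP q$ at $x$ under $\Vdash'$. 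Thus validity of the single-variable instance gives validity of the whole scheme, and the corollary follows. No step here is a genuine obstacle; the work has already been done in Theorem \ref{ffp_GL}, the analogous proposition for $\SF$, and Theorem \ref{fc}.
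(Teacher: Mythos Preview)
Your proposal is correct and matches the paper's approach: the paper states this corollary without proof, treating it as immediate from the $\GL$ and $\SF$ frame characterizations together with Theorem~\ref{fc}, and your write-up simply spells out that bookkeeping (including the routine substitution observation needed to pass from the single-variable instances in Theorem~\ref{fc} to the full schemes).
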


Here we clarify the structure of $\PF$-frames more clearly. 
Actually, we show that every $\PF$-frame has a two-layered structure according to $\accp$ and $\accf$ where the inner layer consists of $\SF$-frames based on $\accf$, and the outer layer is a $\GL$-frame based on $\accp$.
Given any $\PF$-frame $\mc F = (W, \accp, \accf)$, let $\sim$ be the symmetric closure of $\accf$, namely $\sim$ is defined by: 
\[
    x \sim y : \iff x \accf y\ \text{or}\ y \accf x. 
\]
Also, let $\sim^+$ be the transitive closure of $\sim$. 
Then, $\sim^+$ is an equivalence relation on $W$, and for each $x \in W$, let $\mc C_x$ be the equivalence class of $x$. 
We call every equivalence class with respect to $\sim^+$ a \textit{cluster}. 
For every cluster $\mc C$, the Kripke frame $(\mc C, \accf)$ forms an $\SF$-frame, and so $W$ is thought of as a disjoint union of $\SF$-frames. 

Next, we explore the relation $\accp$. 

\begin{prop}\label{Prop_accp}
Let $\mc F = (W, \accp, \accf)$ be any $\PF$-frame and $x, y, z \in W$. 
\begin{enumerate}
    \item If $x \sim^+ y$ and $x \accp z$, then $y \accp z$. 
    \item If $x \sim^+ y$, then $x \not \accp y$. 
\end{enumerate}
\end{prop}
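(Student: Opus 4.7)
The plan is to handle (1) by induction on the length of a $\sim$-chain witnessing $x \sim^+ y$, feeding the two possible directions of each $\sim$-step into clauses (1) and (2) of Theorem~\ref{fc}. Claim (2) will then fall out immediately by specializing (1) at $z := y$ and invoking the irreflexivity of $\accp$, which is built into conversely well-foundedness.

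For (1), I would fix $z \in W$ with $x \accp z$ and pick a witnessing chain $x = x_0 \sim x_1 \sim \cdots \sim x_n = y$. Induct on $n$; the case $n = 0$ is trivial. At the inductive step the induction hypothesis delivers $x_{n-1} \accp z$, and the relation $x_{n-1} \sim x_n$ splits into two subcases. If $x_{n-1} \accf x_n$, then Theorem~\ref{fc}.(2), instantiated with its $x$ taken to be $x_{n-1}$ and its $y$ to be $x_n$, converts $x_{n-1} \accf x_n$ together with $x_{n-1} \accp z$ into $x_n \accp z$. If instead $x_n \accf x_{n-1}$, then $x_n \accf x_{n-1} \accp z$, and Theorem~\ref{fc}.(1) produces $x_n \accp z$ directly. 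Either way the chain transports the property ``$\accp z$'' from $x_0$ down to $x_n = y$.

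For (2), suppose toward a contradiction that $x \sim^+ y$ and $x \accp y$. Applying the just-established (1) with the parameter $z$ taken to be $y$ itself gives $y \accp y$. But a $\PF$-frame has $\accp$ transitive and conversely well-founded, hence irreflexive (a loop $y \accp y$ would generate the infinite $\accp$-chain $y \accp y \accp y \accp \cdots$), so we reach a contradiction. The only mild obstacle is bookkeeping for the induction on chain length; once that is set up cleanly, each subcase of the inductive step is a one-line application of Theorem~\ref{fc} and no further work is required.
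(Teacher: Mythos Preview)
Your proof is correct and follows essentially the same approach as the paper: induct along a $\sim$-chain for part (1), feeding each step into the appropriate frame condition from Theorem~\ref{fc}, and then derive part (2) by specializing (1) at $z = y$ and invoking irreflexivity of $\accp$. Your choice of clauses (2) and (1) of Theorem~\ref{fc} for the two subcases is the right one.
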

\begin{proof}
1. Suppose $x \sim^+ y$ and $x \accp z$. 
We find $u_0, u_1, \ldots, u_{n+1}$ such that $x = u_0$, $u_{n+1} = y$, and $u_i \sim u_{i+1}$ for $i \leq n$. 
We then prove by induction on $i \leq n$ that $u_i \accp z$. 
The case $i = 0$ is immediate from our supposition. 
Assume that $u_i \accp z$ holds for an $i < n+1$ and we show $u_{i+1} \accp z$. 
Since $u_i \sim u_{i+1}$, we have $u_i \accf u_{i+1}$ or $u_{i+1} \accf u_i$.
If $u_i \accf u_{i+1}$, then by Theorem \ref{fc}.(3), we have $u_{i+1} \accp z$. 
If $u_{i+1} \accf u_{i}$, then by Theorem \ref{fc}.(2), we get $u_{i+1} \accp z$. 
So, in either case, we obtain $u_{i+1} \accp z$. 
Consequently, we have $y = u_{n+1} \accp z$. 

2. Suppose $x \sim^+ y$. 
Since $y \not \accp y$, we have $x \not \accp y$ by (1). 
\end{proof}

We can treat the relation $\accp$ as the relation between clusters and elements of $W$ by defining $\mc C_x \accp y$ iff $x \accp y$. 
Proposition \ref{Prop_accp}.(1) says that this treatment is well-defined. 
Also, Proposition \ref{Prop_accp}.(2) says that if $\mc C_x \accp y$, then $\mc C_x \neq \mc C_y$. 
However, the relation $\accp$ cannot be thought of as a relation between clusters in general because even if $x \sim^+ y$ and $z \accp x$, it is not necessarily $z \accp y$ in general.    
To improve this situation, we introduce the following definition: 

\begin{defn}
We say that a $\PF$-frame $\mc F = (W, \accp, \accf)$ is \textit{nice} iff the following condition holds:
        \[
            \forall x, y, z \in W\, (x \accp z\ \&\ y \accf z \Rightarrow x \accp y). 
        \]
\end{defn}

\begin{prop}\label{Prop_nice}
Let $\mc F = (W, \accp, \accf)$ be any nice $\PF$-frame and $x, y, z, w \in W$. 
If $x \sim^+ y$, $z \sim^+ w$, and $x \accp z$, then $y \accp w$. 
\end{prop}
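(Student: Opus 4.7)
The statement needs to propagate $\accp$ along $\sim^+$ in both arguments. I would handle the left and right separately: first push from $x$ to $y$ on the source side, then from $z$ to $w$ on the target side.

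The left side is free: from $x \sim^+ y$ and $x \accp z$, Proposition \ref{Prop_accp}.(1) immediately gives $y \accp z$. So the real content is to show that $y \accp z$ and $z \sim^+ w$ imply $y \accp w$. I would unfold $z \sim^+ w$ into a finite chain $z = u_0, u_1, \dots, u_n = w$ with $u_i \sim u_{i+1}$, and induct on $i \le n$ to prove $y \accp u_i$. The base case is exactly the conclusion of the first step.

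For the inductive step, assume $y \accp u_i$. Since $u_i \sim u_{i+1}$, either $u_i \accf u_{i+1}$ or $u_{i+1} \accf u_i$. In the first case, Theorem \ref{fc}.(3) (the frame condition $\forall x, y, z\,(x \accp y \accf z \Rightarrow x \accp z)$ corresponding to $\BP q \to \BP \BF q$) yields $y \accp u_{i+1}$ directly. In the second case, we use the niceness hypothesis: with $y \accp u_i$ and $u_{i+1} \accf u_i$, the defining condition of a nice $\PF$-frame gives $y \accp u_{i+1}$. Either way the induction advances, and setting $i = n$ we conclude $y \accp w$.

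The only subtle point, and the reason the proposition needed \emph{niceness} rather than just the three axioms from Theorem \ref{fc}, is that the symmetric closure $\sim$ mixes forward and backward $\accf$-steps. The forward direction is handled by the axiom $\BP q \to \BP \BF q$ already valid in every $\PF$-frame, but the backward direction has no counterpart among the $\PF$-axioms; niceness is precisely the extra postulate that supplies it, making $\accp$ descend to a well-defined relation on clusters as desired.
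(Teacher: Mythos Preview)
Your proof is correct and follows essentially the same route as the paper: first apply Proposition \ref{Prop_accp}.(1) to obtain $y \accp z$, then induct along a $\sim$-chain from $z$ to $w$, using Theorem \ref{fc}.(3) for forward $\accf$-steps and the niceness condition for backward ones. (In fact your citation of Theorem \ref{fc}.(3) is the right one; the paper's own proof cites clause (1) at that spot, which appears to be a typo.)
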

\begin{proof}
    Suppose $x \sim^+ y$, $z \sim^+ w$, and $x \accp z$. 
    By Proposition \ref{Prop_accp}, we have $y \accp z$. 
    Since $z \sim^+ w$, we find $u_0, u_1, \ldots, u_{n+1}$ such that $z = u_0$, $u_{n+1} = w$, and $u_i \sim u_{i+1}$ for $i \leq n$. 
    We prove by induction on $i \leq n$ that $y \accp u_i$. 
The case $i = 0$ is obvious. 
Assume that $y \accp u_i$ holds for some $i < n+1$ and we show $y \accp u_{i+1}$. 
If $u_i \accf u_{i+1}$, then by Theorem \ref{fc}.(1), we have $y \accp u_{i+1}$. 
If $u_{i+1} \accf u_{i}$, then by the niceness of $\mc F$, we obtain $y \accp u_{i+1}$. 
In either case, we conclude $y \accp u_{i+1}$. 
Therefore $y \accp u_{n+1} = w$. 
\end{proof}

Proposition \ref{Prop_nice} says that in a nice $\PF$ frame $\mc F = (W, \accp, \accf)$, the relation $\accp$ can be thought of as a relation between clusters, that is, the relation $\mc C_x \accp \mc C_y$ defined by $x \accp y$ is well-defined. 
To sum up, for every nice $\PF$-frame $\mc F = (W, \accp, \accf)$ and each cluster $\mc C$ of $\mc F$, we have that $(W / {\sim^+}, \accp)$ is a $\GL$-frame and $(\mc C, \accf)$ is an $\SF$-frame. 

We further introduce the following notions. 

\begin{defn}
We say that a nice $\PF$-frame $\mc F = (W, \accp, \accf)$ is \textit{rooted} iff the following two conditions hold:
\begin{itemize}
    \item There exists the $\accp$-root cluster of $\mc F$, that is, there exists an element $r \in W$ such that for any $x \in W$, either $\mc C_r = \mc C_x$ or $\mc C_r \accp \mc C_x$. 
    \item Every cluster $\mc C$ of $\mc F$ has a $\accf$-root element, that is, there exists an $x \in \mc C$ such that for any $y \in \mc C$, we have $x \accf y$. 
\end{itemize}
A nice rooted $\PF$-frame $\mc F$ is said to be \textit{PBA} iff $(\mc C, \accf)$ is a pre-Boolean algebra for any cluster $\mc C$ of $\mc F$. 
\end{defn}

We are ready to prove the finite frame property of $\PF$ with respect to the class of all finite rooted nice PBA $\PF$-frames.

\begin{thm}[The finite frame property of $\PF$]\label{ffp_PF}
For any $\LPF$-formula $A$, the following are equivalent: 
\begin{enumerate}
    \item $\PF \vdash A$. 
    \item $A$ is valid on all $\PF$-frames. 
    \item $A$ is valid on all finite rooted nice PBA $\PF$-frames.
    \item $A$ is true in all $\accf$-root elements of the $\accp$-root cluster of $\mc M$ for all finite rooted nice PBA $\PF$-models $\mc M$.
\end{enumerate}
\end{thm}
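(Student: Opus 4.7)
My plan is to prove the four conditions equivalent via the cycle $(1) \Rightarrow (2) \Rightarrow (3) \Rightarrow (4) \Rightarrow (1)$, the first three of which are routine while $(4) \Rightarrow (1)$ carries the content.

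\textbf{The short directions.} For $(1) \Rightarrow (2)$, I would check soundness of $\PF$ on $\PF$-frames by induction on derivations: the $\GL$- and $\SF$-axioms hold by the standard Kripke characterizations recalled in Section \ref{sec_pre}, and the three interaction axioms hold by Theorem \ref{fc}. For $(2) \Rightarrow (3)$, I would verify that every finite rooted nice PBA $\PF$-frame is in fact a $\PF$-frame: each cluster $(\mc C, \accf)$ is a pre-Boolean algebra, hence an $\SF$-frame; the relation $\accp$ on the finite quotient $W/{\sim^+}$ is irreflexive by Proposition \ref{Prop_accp}.(2), hence conversely well-founded; transitivity of $\accp$ and the three conditions of Theorem \ref{fc} follow from niceness via Propositions \ref{Prop_accp} and \ref{Prop_nice}. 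The implication $(3) \Rightarrow (4)$ is immediate from the definition of rooted.

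\textbf{The main direction $(4) \Rightarrow (1)$.} Arguing contrapositively, assume $\PF \nvdash A$. The goal is to produce a finite rooted nice PBA $\PF$-model $\mc M$ in which $A$ is refuted at some $\accf$-root of the $\accp$-root cluster. First, a selective filtration argument in the style of Boolos's proof for $\GL$ (cf.~\cite{Boo93}) combined with the filtration technique for $\SF$ (cf.~\cite{CZ97}), relative to a suitable finite set $\Phi$ containing $\Sub(A)$ (closed under $\BP$, $\BF$, and negation on subformulas), produces a finite rooted $\PF$-model $\mc M_0$ refuting $A$; the closure conditions on $\Phi$ guarantee that the frame conditions of Theorem \ref{fc} transfer from the underlying maximal $\PF$-consistent sets to $\mc M_0$. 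Second, for each cluster $(\mc C, \accf)$ of $\mc M_0$, I apply the technique of Hamkins--L\"owe behind Theorem \ref{PBA} to replace $\mc C$ by a finite pre-Boolean algebra $(\mc C^\sharp, \accf^\sharp)$ refuting the same $\LF$-formulas in $\Phi$ cluster-internally. The relation $\accp$ is then transported uniformly between new clusters: for $x$ in one replacement cluster and $y$ in another, set $x \accp y$ iff the old cluster of $x$ was $\accp$-related to the old cluster of $y$ in $\mc M_0$. Because the axioms $\BP B \to \BF \BP B$ and $\DP B \to \BF \DP B$ make every $\BP$- and $\DP$-formula $\accf$-invariant inside every cluster of every $\PF$-model, this uniform extension is a nice $\PF$-frame and preserves the satisfaction of $\BP$- and $\DP$-subformulas of $A$. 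Finally, choosing the valuation on the pre-Boolean algebra replacing the $\accp$-root cluster so that an $\accf$-root of it refutes $A$ (using the flexibility in Hamkins--L\"owe's replacement) completes the construction.

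\textbf{The main obstacle.} The delicate step is the cluster-by-cluster PBA replacement and its compatibility with the inter-cluster $\accp$-structure. Hamkins--L\"owe's Theorem \ref{PBA} delivers $\LF$-level preservation only, whereas I also need the new valuation to respect propositional variables appearing in non-modal or $\BP$-contexts inside $A$, and the $\accp$-edges to be transported in a cluster-uniform way to yield niceness. What makes this feasible is the semantic $\accf$-invariance of $\BP$- and $\DP$-formulas on any $\PF$-model: it tells us that the outgoing $\accp$-behaviour is already a cluster invariant, so transporting $\accp$ to the replacement requires no choice of representative, and relocating the refuting point of $A$ to an $\accf$-root of the root cluster can be achieved by absorbing the needed truth values of propositional variables into the Hamkins--L\"owe construction. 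Carrying out these bookkeeping arguments cleanly is the technical heart of the proof.
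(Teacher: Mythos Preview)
Your two-stage plan---first a filtration to a finite nice $\PF$-model, then a cluster-by-cluster pre-Boolean replacement via Theorem \ref{PBA}---is exactly the paper's approach. But in both stages you stop short of the device that makes the step go through, and in one place your description is not quite right.

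For the filtration, the paper does not try to merge a Boolos-style $\GL$ filtration with an $\SF$ filtration over a single closure set. Instead its worlds are \emph{pairs} $([x],y)$: $x$ ranges over full maximal $\PF$-consistent sets modulo a finite-index equivalence $\equiv$ (this component fixes the cluster and carries $\accp$), while $y$ ranges over maximal consistent subsets of a finite set $\overline{\Sub}(A)$ with $\DF\bigwedge y\in x$ (this component moves inside the cluster and carries $\accf$). The closure $\overline{\Sub}(A)$ is $\Sub(A)$ plus negations plus $\{\BF B,\neg\BF B:\BP B\in\Sub(A)\}$; those extra $\BF B$ are precisely what the $\accp$-truth-lemma needs, via $\BP B\to\BP\BF B$. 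Your phrase ``closed under $\BP$, $\BF$'' would make $\Phi$ infinite, so it cannot be what you mean.

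For the PBA replacement, you correctly isolate the obstacle---Theorem \ref{PBA} only controls $\LF$-formulas---but call its resolution ``bookkeeping''. It is more than that: the paper introduces a fresh variable $q_C$ for every $\BP C\in\Sub(A)$, defines a translation $(-)^\dagger$ sending $\BP C\mapsto q_C$ and commuting with everything else, and then applies Theorem \ref{PBA} not to the cluster's $\LF$-theory alone but to $\Th_{[x]}^\dagger$, which also contains the persistence constraints $q_C\to\BF q_C$ and $\neg q_C\to\BF\neg q_C$. This is how the $\accf$-invariance of $\BP$-formulas is forced to survive the replacement, and an induction on the $\BP$-depth $d(B)$ then recovers $B\leftrightarrow B^\dagger$ in the new model. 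That translation is the missing idea in your outline.
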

\begin{proof}
The implications $(1 \Rightarrow 2)$, $(2 \Rightarrow 3)$, and $(3 \Rightarrow 4)$ are easy. 
We prove the contrapositive of the implication $(4 \Rightarrow 1)$. 
Suppose $\PF \nvdash A$. 
We would like to find a finite rooted nice PBA $\PF$-model $\mc M^*$ and $\accf$-root element $r$ of the $\accp$-root cluster $\mc C_r$ of $\mc M^*$ such that $r$ falsifies $A$. 
For this purpose, we first provide a finite nice $\PF$-countermodel $\mc M$ of $A$. 
Then, we transform $\mc M$ to provide the desired model $\mc M^*$.

We say that a set $\Gamma$ of $\LPF$-formulas is \textit{$\PF$-consistent} iff for any finite subset $\Gamma_0$ of $\Gamma$, we have $\PF \nvdash \bigwedge \Gamma_0 \to \bot$, where $\bigwedge \Gamma_0$ is a conjunction of all elements of $\Gamma_0$. 
A $\PF$-consistent set $\Gamma$ is called \textit{$\PF$-maximally consistent} iff it is maximal among $\PF$-consistent sets. 
\begin{itemize}
    \item Let $W_0$ be the set of all $\PF$-maximally consistent sets. 
\end{itemize}
Of course $W_0$ is not a finite set, but we provide a finite set by dividing $W_0$ by some appropriate equivalence relation. 
Let $\Sub(A)$ be the set of all subformulas of $A$. 
We define the set $\Subb(A)$ of formulas as follows:
\[
    \Subb(A) : = \Sub(A) \cup \{- B \mid B \in \Sub(A)\} \cup \{\BF B, \neg \BF B \mid \BP B \in \Sub(A)\}, 
\]
where
\[
    - B = \begin{cases} C & \text{if}\ B\ \text{is of the form}\ \neg C, \\ \neg B & \text{otherwise.} \end{cases}
\]
Note that $\Sub(A)$ and hence $\Subb(A)$ are finite sets. 
We say that a $\PF$-consistent set $\Gamma$ is \textit{$A$-maximally $\PF$-consistent} iff $\Gamma \subseteq \Subb(A)$ and it is maximal among $\PF$-consistent subsets of $\Subb(A)$. 
\begin{itemize}
    \item Let $W_1$ denote the set of all $A$-maximally $\PF$-consistent sets. 
\end{itemize}
We have that $W_1$ is a finite set. 
We define the equivalence relation $\equiv$ on $W_0$ as follows: 
for $x, x' \in W_0$, let $x \equiv x'$ iff the following two conditions hold: 
\begin{enumerate}
    \item for any $y \in W_1$, we have $\DF \bigwedge y \in x$ if and only if $\DF \bigwedge y \in x'$, 
    \item for any $B \in \Subb(A)$, we have $B \in x$ if and only if $B \in x'$. 
\end{enumerate}
Since $W_1$ and $\Subb(A)$ are finite, we obtain that $W_0 / \equiv$ is finite. 
For $x \in W_0$, let $[x]$ denote the $\equiv$-equivalence class of $x$. 

We define the Kripke model $\mc M = (W, \accp, \accf, \Vdash)$ as follows: 
\begin{itemize}
    \item $W = \{([x], y) \mid [x] \in W_0 / \equiv, y \in W_1$, and $\DF \bigwedge y \in x\}$, 

    \item $([x], y) \accp ([x'], y')$ iff the following two conditions hold:
    \begin{enumerate}
    \item for any $\BP B \in \Subb(A)$, if $\BP B \in x$, then $\BP B, \BF B \in x'$, 
    \item for some $\BP C \in \Subb(A)$, we have $\BP C \notin x$ and $\BP C \in x'$.  
    \end{enumerate}

    \item $([x], y) \accf ([x'], y')$ iff the following two conditions hold:
    \begin{enumerate}
    \item $[x] = [x']$, 
    \item for any $\BF B \in \Subb(A)$, if $\BF B \in x$, then $\BF B \in x'$.  
    \end{enumerate}

    \item for any propositional variable $p$, $([x], y) \Vdash p$ iff $p \in y$. 

\end{itemize}
Notice that if $\BP B \in \Subb(A)$, then $\BF B \in \Subb(A)$. 
So by the definition of $\equiv$, we have that the set $W$ and the relation $\accf$ are well-defined. 
Since $W_0 / \equiv$ and $W_1$ are finite, $W$ is also a finite set. 
Notice that the establishment of the relation $([x], y) \accp ([x'], y')$ does not depend on $y$ and $y'$.

We prove the following useful lemma: 

\begin{lem}\label{useful}
For any $([x], y) \in W$ and $B \in \Subb(A)$, if $\BF B \in x$, then $B \in y$. 
\end{lem}
\begin{proof}
Suppose $([x], y) \in W$, $B \in \Subb(A)$, and $\BF B \in x$. 
Since $([x], y) \in W$, we have $\DF \bigwedge y \in x$, and hence $\DF (\bigwedge y \land B) \in x$. 
Since $\DF \bot \notin x$, we get $-B \notin y$. 
By the $A$-maximality of $y$, we obtain $B \in y$. 
\end{proof}

\begin{cl}\label{nice_PF}
The frame $\mc F = (W, \accp, \accf)$ of $\mc M$ is a finite nice $\PF$-frame. 
\end{cl}
\begin{proof}
It is easily shown that $\accp$ is transitive and conversely well-founded and $\accf$ is transitive and reflexive. 

For each $x \in W_0$, let $\mc C_{[x]} : = \{([x], y) \mid ([x], y) \in W\}$. 
We show that $\mc C_{[x]}$ forms a rooted cluster. 
Let $\ul x: = \{B \in \Subb(A) \mid B \in x\}$. 
Then, $\ul x \in W_1$. 
Since $\bigwedge \ul x \in x$, we have $\DF \bigwedge \ul x \in x$ because $\PF$ proves $\bigwedge \ul x \to \DF \bigwedge \ul x$. 
Hence $([x], \ul x) \in W$. 
It suffices to show that $([x], \ul x)$ is a $\accf$-root element of $\mc C_{[x]}$. 
Fix $([x], y) \in \mc C_{[x]}$. 
For a $\BF B \in \Subb(A)$, suppose $\BF B \in \ul x$. 
Then, $\BF B \in x$, and so $\BF \BF B \in x$. 
By Lemma \ref{useful}, we obtain $\BF B \in y$. 
By the definition of $\accf$, we conclude $([x], \ul x) \accf ([x], y)$. 

By the definition, the relation $\accp$ can be treated as a relation between clusters. 
So, the following three implications are easily verified: 
\begin{itemize}
    \item If $([x], y) \accf ([x], z) \accp ([x'], u) \accf ([x'], v)$, then $([x], y) \accp ([x'], v)$, 
    \item If $([x], y) \accf ([x], z)$ and $([x], y) \accp ([x'], u)$, then $([x], z) \accp ([x'], u)$, 
    \item If $([x], y) \accp ([x'], v)$ and $([x'], u) \accf ([x'], v)$, then $([x], y) \accp ([x'], u)$. 
\end{itemize}
Therefore, to show that $\mc F$ is a finite nice $\PF$-frame, it is sufficient to show that $\accf$ is upward directed. 

Assume $([x], y) \accf ([x], y_0)$ and $([x], y) \accf ([x], y_1)$. 
Then, $\DF \bigwedge y_0$ and $\DF \bigwedge y_1$ are in $x$. 
Suppose, towards a contradiction, that $\BF \neg \bigwedge \Gamma \in x$ for 
\[
    \Gamma : = \{\BF B \mid \BF B \in y_0\} \cup \{\BF C \mid \BF C \in y_1\}. 
\]
Then, 
\[
    \BF \bigl(\bigwedge_{\BF B \in y_0} \BF B \to \neg \bigwedge_{\BF C \in y_1} \BF C \bigr) \in x. 
\]
Since
\[
    \BF \BF \bigl(\bigwedge_{\BF B \in y_0} \BF B \to \neg \bigwedge_{\BF C \in y_1} \BF C \bigr) \in x, 
\]
we have
\[
    \BF \bigl(\bigwedge_{\BF B \in y_0} \BF \BF B \to \BF \neg \bigwedge_{\BF C \in y_1} \BF C \bigr) \in x. 
\]
So, we obtain
\[
    \BF \bigl(\bigwedge y_0 \to \BF \neg \bigwedge_{\BF C \in y_1} \BF C \bigr) \in x. 
\]
By combining this with $\DF \bigwedge y_0 \in x$, we get $\DF \BF \neg \bigwedge_{\BF C \in y_1} \BF C \in x$. 
By the axiom $\DF \BF D \to \BF \DF D$, we have $\BF \DF \neg \bigwedge_{\BF C \in y_1} \BF C \in x$, and equivalently $\BF \neg \bigwedge_{\BF C \in y_1} \BF \BF C \in x$. 
So, $\BF \neg \bigwedge_{\BF C \in y_1} \BF C \in x$, and thus $\BF \neg \bigwedge y_1 \in x$. 
This contradicts the $\PF$-consistency of $x$ because $\DF \bigwedge y_1 \in x$. 

We have shown that $\DF \bigwedge \Gamma \in x$. 
It is easily shown that $\PF$ proves $\DF B \to \DF (B \land C) \lor \DF(B \land \neg C)$ for any $\LPF$-formulas $B$ and $C$. 
So, we find some $z \in W_1$ such that $\Gamma \subseteq z$ and $\DF \bigwedge z \in x$ because $\Gamma \subseteq \Subb(A)$. 
Then, $([x], z) \in W$. 
By the definition of $\Gamma$, we obtain $([x], y_0) \accf ([x], z)$ and $([x], y_1) \accf ([x], z)$. 
Therefore, $\accf$ is upward directed. 
\end{proof}

By the proof of Claim \ref{nice_PF}, the mapping $[x] \mapsto \mc C_{[x]}$ is a bijection between $W_0 / \equiv$ and the set of all clusters of $\mc F$. 
Since $\accp$ can be thought as a relation between clusters, $\accp$ can be also treated as a relation on $W_0 / \equiv$. 
So, we write $[x] \accp [x']$ to mean $([x], y) \accp ([x'], y')$ for some $y, y' \in W_1$, and this is equivalent to $([x], \ul x) \accp ([x'], \ul x')$.

\begin{cl}\label{claim_tl}
    For any $B \in \Subb(A)$ and $([x], y) \in W$, we have $([x], y) \Vdash B$ if and only if $B \in y$. 
\end{cl}
\begin{proof}
We prove the claim by induction on the construction of $B$. 
If $B$ is a propositional variable, then the claim follows from the definition of $\Vdash$. 
The cases for $\land$, $\lor$, $\neg$, and $\to$ are easy. 
We only give proofs of the cases that $B$ is of the form $\BP C$ or $\BF C$, where the claim holds for $C$. 

\paragraph*{Case 1:} $B$ is $\BP C$. 

$(\Rightarrow)$: We prove the contrapositive. 
Suppose $\BP C \notin y$. 
By Lemma \ref{useful}, $\BF \BP C \notin x$, and hence $\BP C \notin x$ because $\PF \vdash \BP C \to \BF \BP C$. 
Suppose, towards a contradiction, that the set $\Gamma$ defined as follows is not $\PF$-consistent: 
\[
    \Gamma : = \{\BP D, \BF D \mid \BP D \in \ul x\} \cup \{\BP C, - C\}. 
\]
Then, 
\[
    \PF \vdash \bigwedge_{\BP D \in \ul x}\bigl(\BP  D \land \BF D \bigr) \to (\BP C \to C). 
\]
By distributing $\BP$,  
\[
    \PF \vdash \bigwedge_{\BP D \in \ul x}\bigl(\BP \BP  D \land \BP \BF D \bigr) \to \BP (\BP C \to C). 
\]
Since $\PF \vdash \BP D \to \BP \BP D \land \BP \BF D$ and $\PF \vdash \BP (\BP C \to C) \to \BP C$, we obtain
\[
    \PF \vdash \bigwedge \ul x \to \BP C. 
\]
This contradicts $\BP C \notin x$. 

We have shown that $\Gamma$ is $\PF$-consistent, and so $\Gamma$ can be extended to a maximally $\PF$-consistent set $x' \in W_0$. 
Then, $([x'], \ul x') \in W$. 
By the definition of $\Gamma$, it is easy to see $([x], y) \accp ([x'], \ul x')$. 
Since $C \notin \ul x'$, by the induction hypothesis, we obtain $([x'], \ul x') \nVdash C$. 
Therefore, $([x], y) \nVdash \BP C$. 

$(\Leftarrow)$: 
Suppose $\BP C \in y$. 
Since $\neg \BP C \notin y$, by Lemma \ref{useful}, $\BF \neg \BP C \notin x$. 
Then, $\neg \BP C \notin x$ because $\PF \vdash \neg \BP C \to \BF \neg \BP C$. 
Since $x$ is maximally consistent, we obtain $\BP C \in x$. 

Let $([x'], u)$ be any element of $W$ satisfying $([x], y) \accp ([x'], u)$. 
Since $\BP C \in x$, we have $\BF C \in x'$. 
By Lemma \ref{useful}, we get $C \in u$. 
By the induction hypothesis, $([x'], u) \Vdash C$. 
Therefore, we conclude $([x], y) \Vdash \BP C$. 

\paragraph*{Case 2:} $B$ is $\BF C$. 

$(\Rightarrow)$: We prove the contrapositive. 
Assume $\BF C \notin y$. 
Suppose, towards a contradiction, that $\BF \neg \bigwedge \Gamma \in x$ for the set $\Gamma$ defined as follows: 
\[
    \Gamma : = \{\BF D \mid \BF D \in y\} \cup \{- C\}. 
\]
Then, 
\[
    \BF \bigl(\bigwedge_{\BF D \in y} \BF D \to C \bigr) \in x. 
\]
In the similar way as above, we obtain
\[
    \BF \bigl(\bigwedge y \to \BF C \bigr) \in x. 
\]
Thus, we have $\BF \neg \bigwedge y \in x$ because $\neg \BF C \in y$ by the $A$-maximality of $y$. 
This contradicts the $\PF$-consistency of $x$ because $\DF \bigwedge y \in x$. 

We have shown that $\DF \bigwedge \Gamma \in x$. 
Since $\Gamma \subseteq \Subb(A)$, we find some $z \in W_1$ such that $\Gamma \subseteq z$ and $\DF \bigwedge z \in x$. 
Then, $([x], \ul z) \in W$.
By the definition of $\Gamma$, we have $([x], y) \accf ([x], z)$. 
Since $C \notin z$, by the induction hypothesis, $([x], z) \nVdash C$. 
We conclude $([x], y) \nVdash \BF C$. 

$(\Leftarrow)$: Suppose $\BF C \in y$. 
Let $([x], z)$ be any element of $W$ satisfying $([x], y) \accf ([x], z)$. 
Then, $\BF C \in z$, and hence $C \in z$ because $\PF \vdash \BF C \to C$. 
By the induction hypothesis, $([x], z) \Vdash C$. 
Hence, $([x], y) \Vdash \BF C$. 
\end{proof}



Since $\PF \nvdash A$, we have that $\{-A\}$ is $\PF$-consistent. 
We fix some $w \in W_0$ such that $-A \in w$. 
Then, $([w], \ul w) \in W$ and $A \notin \ul w$. 
By Claim \ref{claim_tl}, $([w], \ul w) \nVdash A$. 
We have proved that $\mc M$ is a finite nice $\PF$-model in which $A$ is not valid. 

Next, we transform $\mc M$ to a finite rooted nice PBA $\PF$-model $\mc M^*$. 
For this purpose, we introduce a translation $\dagger$ of formulas. 
For each $\BP C \in \Subb(A)$, we prepare a distinct propositional variable $q_C$ not contained in $\Subb(A)$. 
We define the translation $\dagger$ of elements of $\Subb(A)$ to $\LF$-formulas recursively as follows: 
\begin{enumerate}
    \item $p^\dagger$ is $p$ for each propositional variable $p$, 
    \item $\dagger$ commutes with each propositional connective and $\BF$, 
    \item $(\BP C)^\dagger$ is $q_C$. 
\end{enumerate}
Since $q_C$ is not contained in $\Subb(A)$, we may freely define the truth value of $q_C$ in $\mc M$ for each $\BP C \in \Subb(A)$. 
For each $([x], y) \in W$, let $([x], y) \Vdash q_C$ if and only if $([x], y) \Vdash \BP C$. 
Then, it is easily proved that $B \leftrightarrow B^\dagger$ is valid in $\mc M$ for every $B \in \Subb(A)$.  
For each $[x] \in W_0 / \equiv$, let $\Th_{[x]}^\dagger$ be the finite set
\[
    \{B^\dagger \mid ([x], \ul x) \Vdash B\ \&\ B \in \Subb(A) \} \cup \{q_C \to \BF q_C, \neg q_C \to \BF \neg q_C \mid \BP C \in \Subb(A)\}. 
\]
Since $([x], \ul x) \Vdash \bigwedge \Th_{[x]}^\dagger$, we have $([x], \ul x) \nVdash \neg \bigwedge \Th_{[x]}^\dagger$, and hence $\SF \nvdash \neg \bigwedge \Th_{[x]}^\dagger$. 
By Theorem \ref{PBA}, we find a rooted $\SF$-model $(\mc C_{[x]}^*, \accf_{[x]}^*, \Vdash_{[x]}^*)$ based on a finite pre-Boolean algebra and a root element $r_{[x]}^*$ of $\mc C_{[x]}^*$ such that $r_{[x]}^* \Vdash_{[x]}^* \bigwedge \Th_{[x]}^\dagger$. 
We may assume that $\mc C_{[x]}^* \cap \mc C_{[x']}^* = \emptyset$ for $[x] \neq [x']$. 

Let $\mc M^* = (W^*, \accp^*, \accf^*, \Vdash^*)$ be the Kripke model obtained from the $\mc C_{[w]}$-generated submodel of $\mc M$ by simply replacing each cluster $\mc C_{[x]}$ with the corresponding pre-Boolean algebra $\mc C_{[x]}^*$. 
More precisely, the Kripke model $\mc M^* = (W^*, \accp^*, \accf^*, \Vdash^*)$ is defined as follows: 
\begin{itemize}
    \item $W^* = \bigcup \{\mc C_{[x]}^* \mid [x] = [w]\ \text{or}\ [w] \accp [x]\}$, 
    \item $\accp^* = \bigcup \{\mc C_{[x]}^* \times \mc C_{[x']}^* \mid ([x] = [w]\ \text{or}\ [w] \accp [x])\ \&\ [x] \accp [x']\}$, 
    \item $\accf^* = \bigcup \{ \accf_{[x]}^* \mid [x] = [w]\ \text{or}\ [w] \accp [x]\}$, 
    \item For $a \in \mc C_{[x]}^*$, let $a \Vdash^* p : \iff a \Vdash_{[x]}^* p$. 
\end{itemize}

It is easily shown that our model $\mc M^*$ is a finite rooted nice PBA $\PF$-model with the $\accp^*$-root cluster $\mc C_{[w]}^*$. 
Also, it is proved that for any $a \in \mc C_{[x]}^* \subseteq W^*$ and $\LF$-formula $B$, we have that $a \Vdash^* B$ in $\mc M^*$ if and only if $a \Vdash_{[x]}^* B$ in $(\mc C_{[x]}^*, \accf_{[x]}^*, \Vdash_{[x]}^*)$. 
We define the modal degree $d(B)$ of $\LPF$-formulas $B$ recursively as follows: 
\begin{itemize}
    \item $d(p) : = 0$, 
    \item $d(B \circ C) : = \max \{d(B), d(C)\}$ for $\circ \in \{\land, \lor, \to\}$, 
    \item $d(\neg B) = d(\BF B) : = d(B)$, 
    \item $d(\BP B) : = d(B) + 1$.  
\end{itemize}

\begin{cl}\label{claim_transform}
    For any $B \in \Subb(A)$ and $a \in W^*$, we have $a \Vdash^* B \leftrightarrow B^\dagger$. 
\end{cl}
\begin{proof}
We prove the claim by induction on $d(B)$. 
The case of $d(B) = 0$ is trivial because $B$ is an $\LF$-formula and $B^\dagger$ is $B$ itself. 
We prove the case of $d(B) = n+1$. 
Since $\dagger$ commutes with each propositional connective and $\BF$, it suffices to show the case that $B$ is of the form $\BP C$ where the claim holds for $C$. 
We prove $a \Vdash^* \BP C \leftrightarrow q_C$ for $a \in \mc C_{[x]}^* \subseteq W^*$. 

$(\rightarrow)$: 
Suppose $a \Vdash^* \BP C$. 
Then, $a \Vdash^* \BP \BF C$. 
Let $\mc C_{[x']}^* \subseteq W^*$ be any pre-Boolean algebra such that $[x] \accp^* [x']$. 
Since $a \accp^* r_{[x']}^*$, we have $r_{[x']}^* \Vdash^* \BF C$. 
So, for any $b \in \mc C_{[x']}^*$, we get $b \Vdash^* C$. 
By the induction hypothesis, $b \Vdash^* C^\dagger$. 
Since $r_{[x']}^*$ is a root element of $\mc C_{[x']}^*$, we obtain $r_{[x']}^* \Vdash^* (\BF C)^\dagger$. 
Since $\BF C \in \Subb(A)$, we have $(\BF C)^\dagger \in \Th_{[x']}^\dagger$, and thus $([x'], \ul x') \Vdash \BF C$. 
Therefore, $([x'], y) \Vdash C$ for all $([x'], y) \in \mc C_{[x']}$. 
We have proved that for any $([x'], y) \in W$, if $([x], \ul x) \accp ([x'], y)$, then $([x'], y) \Vdash C$. 
Hence, $([x], \ul x) \Vdash \BP C$. 
Since $q_C$ and $q_C \to \BF q_C$ are in $\Th_{[x]}^\dagger$, we obtain $r_{[x]}^* \Vdash^* \BF q_C$. 
Since $r_{[x]}^* \accf^* a$, we conclude $a \Vdash^* q_C$.

$(\leftarrow)$: 
Suppose $a \nVdash^* \BP C$. 
Then, we find some $b \in \mc C_{[x']}^* \subseteq W^*$ such that $a \accp^* b$ and $b \nVdash^* C$. 
In this case, $[x] \accp^* [x']$. 
By the induction hypothesis, $b \nVdash^* C^\dagger$, and hence $r_{[x']}^* \nVdash^* (\BF C)^\dagger$. 
Since $\neg \BF C \in \Subb(A)$, we have $(\neg \BF C)^\dagger \in \Th_{[x']}^\dagger$, and thus $([x'], \ul x') \nVdash \BF C$. 
Since $([x], \ul x) \accp ([x'], \ul x')$, we obtain $([x], \ul x) \nVdash \BP \BF C$ and hence $([x], \ul x) \nVdash \BP C$. 
Since $\neg q_C$ and $\neg q_C \to \BF \neg q_C$ are in $\Th_{[x]}^\dagger$, we obtain $r_{[x]}^* \Vdash^* \BF \neg q_C$. 
Since $r_{[x]}^* \accf^* a$, we conclude $a \nVdash^* q_C$.
\end{proof}

Since $([w], \ul w) \nVdash A$, we have $\neg A \in \Th_{[w]}^\dagger$, and thus $r_{[w]}^* \nVdash^* A^\dagger$. 
By Claim \ref{claim_transform}, we obtain $r_{[w]}^* \nVdash^* A$. 
Therefore, $\mc M^*$ is a finite rooted nice PBA $\PF$-model in which $A$ is not valid. 
The proof of Theorem \ref{ffp_PF} was completed. 
\end{proof}

\begin{cor}[The decidability of $\PF$]\label{dec_PF}
The logic $\PF$ is decidable. 
\end{cor}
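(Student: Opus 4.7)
The plan is to derive the decidability of $\PF$ directly from the finite frame property established in Theorem \ref{ffp_PF}, using the standard two-phase recursive-enumeration argument. Since the axioms of $\PF$ are decidable and both inference rules are effective, the set of theorems $\{A \mid \PF \vdash A\}$ is recursively enumerable. It therefore suffices to show that the set of non-theorems is also recursively enumerable, from which decidability follows.

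For this, I would enumerate all finite Kripke triples $\mc F = (W, \accp, \accf)$ up to isomorphism, together with all valuations of the propositional variables occurring in a given $\LPF$-formula $A$. Given such an $\mc F$, one can effectively verify whether $\mc F$ is a nice rooted PBA $\PF$-frame: transitivity and conversely well-foundedness of $\accp$, reflexivity, transitivity and upward directedness of $\accf$, the niceness condition, the frame conditions of Theorem \ref{fc}, the existence of a $\accp$-root cluster and $\accf$-root elements, and the pre-Boolean algebra condition on each cluster are all decidable on finite inputs. Likewise, given a valuation, one can compute whether $A$ is falsified at some $\accf$-root element of the $\accp$-root cluster. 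Applying Theorem \ref{ffp_PF}, we have $\PF \nvdash A$ if and only if such a finite counterexample is found; hence $\{A \mid \PF \nvdash A\}$ is recursively enumerable. Combining this with the recursive enumerability of theorems yields decidability.

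I do not expect any serious obstacle here: the work has essentially been done in the proof of Theorem \ref{ffp_PF}. If desired, a more quantitative bound can be extracted from that proof, since the counter-model $\mc M^*$ constructed there has size bounded by a computable function of $|\Subb(A)|$ (the number of $\equiv$-classes is bounded by $2^{|\Subb(A)|}$, and each cluster is replaced by a finite pre-Boolean algebra whose size is controlled by the corresponding $\Th_{[x]}^\dagger$ via Theorem \ref{PBA}). This would yield a single-phase decision procedure that inspects only frames up to this bound, but the two-phase argument above suffices for the statement.
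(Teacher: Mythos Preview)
Your argument is correct and is exactly what the paper intends: the corollary is stated without proof immediately after Theorem~\ref{ffp_PF}, since decidability is a standard consequence of the finite frame property together with the recursive enumerability of the theorems. Your two-phase enumeration argument (and the optional size bound) is the routine justification the paper leaves implicit.
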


\section{The main theorem}\label{sec_mt}

In this section, we prove that if $\ZFC$ is arithmetically $\Sigma_1$-sound, then the modal logic of provability and forcing is exactly the logic $\PF$. 
This is the main theorem of the present paper. 

\begin{thm}\label{MT}
Suppose that $\ZFC$ is arithmetically $\Sigma_1$-sound. 
Then, for any $\LPF$-formula $A$, the following are equivalent: 
\begin{enumerate}
    \item $\PF \vdash A$. 
    \item $\ZFC \vdash \gpf(A)$ for all translations $g$. 
\end{enumerate}
\end{thm}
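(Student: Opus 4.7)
The direction $(1 \Rightarrow 2)$ is precisely Proposition~\ref{soundness_PF}. For the contrapositive of $(2 \Rightarrow 1)$, suppose $\PF \nvdash A$; by Theorem~\ref{ffp_PF} fix a finite rooted nice PBA $\PF$-model $\mc M^*$ and an $\accf$-root element $r$ of the $\accp$-root cluster $\mc C_0$ with $r \nVdash A$. The plan is to combine Solovay's arithmetical embedding with that of Hamkins--L\"owe, exploiting the two-layered structure of nice PBA $\PF$-frames: Solovay sentences will handle the outer $\GL$-layer of clusters, while HL sentences handle each inner pre-Boolean-algebra layer.

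To streamline the root bookkeeping in Solovay's theorem, first extend $\mc M^*$ by adjoining a new singleton cluster $\{r'\}$ placed as a new $\accp$-root accessing every original cluster; this remains a finite rooted nice PBA $\PF$-frame whose quotient by $\sim^+$ is a finite rooted $\GL$-frame with root $r'$. Apply Theorem~\ref{SS} to this quotient $\GL$-frame to obtain arithmetical Solovay sentences $\{\lambda_i\}$ indexed by clusters, and apply Theorem~\ref{HLS} to each pre-Boolean algebra $\mc C_i$ to obtain HL sentences $\{\chi_a^{(i)}\}_{a \in \mc C_i}$. Define a translation $g$ on propositional variables by
\[
g(p) := \bigvee \bigl\{ \lambda_i \land \chi_a^{(i)} : i \neq r',\ a \in \mc C_i,\ a \Vdash p \bigr\}.
\]

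The heart of the proof is a Truth Lemma, proved by induction on $B \in \Sub(A)$: for every non-root cluster index $i$ and every $a \in \mc C_i$, if $a \Vdash B$ then $\ZFC \vdash \lambda_i \land \chi_a^{(i)} \to \gpf(B)$, and if $a \nVdash B$ then $\ZFC \vdash \lambda_i \land \chi_a^{(i)} \to \neg \gpf(B)$. The atomic and Boolean cases follow from the mutual exclusivity clauses of Theorems~\ref{SS} and~\ref{HLS}. For $B = \BP C$, the forward direction uses Theorem~\ref{HLS}.(2) to collapse $\bigvee_b \chi_b^{(j)}$ to $\top$ and then Solovay clause~(3), $\lambda_i \to \Prov \bigvee_{i \accp j} \lambda_j$, available because $i \neq r'$; the backward direction uses Theorem~\ref{SS}.(4), $\lambda_i \to \Con \lambda_j$, strengthened by Proposition~\ref{con_HL} to $\lambda_i \to \Con(\lambda_j \land \chi_b^{(j)})$. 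For $B = \BF C$, the forward direction combines Theorem~\ref{HLS}.(3) with $\lambda_i \to \ForceB \lambda_i$ (Proposition~\ref{prop_arith}, using that $\lambda_i$ is arithmetical) to obtain $\ForceB(\lambda_i \land \bigvee_{a \accf b} \chi_b^{(i)})$ under the antecedent, from which the conclusion follows by necessitation; the backward direction analogously uses Theorem~\ref{HLS}.(4) to produce $\ForceD(\lambda_i \land \chi_b^{(i)})$ and hence $\ForceD \neg \gpf(C)$.

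With the Truth Lemma established, since $\mc C_0$ is a non-root cluster in the extended frame and $r \nVdash A$, we obtain $\ZFC \vdash \lambda_0 \land \chi_r^{(0)} \to \neg \gpf(A)$. Assume for contradiction $\ZFC \vdash \gpf(A)$; then $\ZFC \vdash \neg(\lambda_0 \land \chi_r^{(0)})$, hence $\ZFC \vdash \Prov \neg(\lambda_0 \land \chi_r^{(0)})$. On the other hand, Theorem~\ref{SS}.(4) gives $\ZFC \vdash \lambda_{r'} \to \Con \lambda_0$, and Proposition~\ref{con_HL} upgrades this to $\ZFC \vdash \lambda_{r'} \to \Con(\lambda_0 \land \chi_r^{(0)})$. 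Combining, $\ZFC \vdash \neg \lambda_{r'}$, contradicting Theorem~\ref{SS}.(6) under the $\Sigma_1$-soundness hypothesis. The principal technical difficulty is the Truth Lemma, specifically interleaving the Solovay- and HL-layer arguments in the modal cases; Proposition~\ref{con_HL} is precisely what mediates this interaction.
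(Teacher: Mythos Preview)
Your proposal is correct and follows essentially the same approach as the paper: adjoin a new singleton root cluster, apply Solovay sentences to the quotient $\GL$-frame and HL sentences to each pre-Boolean cluster, define the translation $g$ as a disjunction of conjunctions $\lambda_i \land \chi_a^{(i)}$, and prove the same Truth Lemma by induction, with Proposition~\ref{con_HL} mediating the $\BP$-case. The only cosmetic differences are notational (you call the new root $r'$ and the original root cluster $\mc C_0$, whereas the paper calls the new root element $0$ and the original root cluster $\mc C_r$) and in the closing step, where the paper routes through $\ZFC \nvdash \gpf(\BP A)$ while you argue directly by contradiction from $\ZFC \vdash \gpf(A)$; both are equivalent.
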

\begin{proof}
$(1 \Rightarrow 2)$: By Proposition \ref{soundness_PF}. 

$(2 \Rightarrow 1)$: We prove the contrapositive. 
Suppose $\PF \nvdash A$. 
By Theorem \ref{ffp_PF}, we find a finite rooted nice PBA $\PF$-model $\mc M = (W, \accp, \accf, \Vdash)$ and a $\accf$-root element $r$ of the $\accp$-root cluster $\mc C_r$ such that $r \nVdash A$. 
We extend $\mc M$ to the Kripke model $\mc M^* = (W^*, \accp^*, \accf^*, \Vdash^*)$ by adding one new element $0$ at the bottom as follows: 
\begin{itemize}
    \item $W^* = W \cup \{0\}$, 
    \item $\accp^* = \accp \cup \{(0, x) \mid x \in W\}$, 
    \item $\accf^* = \accf \cup \{(0, 0)\}$, 
    \item for $a \in W$, $a \Vdash^* p$ iff $a \Vdash p$; and $0 \Vdash^* p$ is arbitrary. 
\end{itemize}
Then, $\mc M^*$ is also a finite rooted nice PBA $\PF$-model and $r \nVdash^* A$. 
Let $\{\lambda_i\}$ be Solovay sentences of the finite rooted $\GL$-frame $(W^*/ {\sim^+}, \accp^*)$ (cf.~Theorem \ref{SS}), where $i$ runs over clusters of $\mc M^*$. 
For each cluster $i$, let $\{\chi_a^i\}$ be HL sentences of the finite pre-Boolean algebra $(i, \accf^*)$ (cf.~Theorem \ref{HLS}). 
We define the translation $g$ as follows: 
\[
    g(p) = \bigvee_{\substack{a \in i \\ a \Vdash^* p}} (\lambda_i \land \chi_a^i).
\]

We prove the following claim: 

\begin{cl*}
    For any cluster $i \neq \mc C_0$, $a \in i$, and $\LPF$-formula $B$, 
\begin{enumerate}
    \item If $a \Vdash^* B$, then $\ZFC \vdash \lambda_i \land \chi_a^i \to \gpf(B)$; 
    \item If $a \nVdash^* B$, then $\ZFC \vdash \lambda_i \land \chi_a^i \to \neg \gpf(B)$.  
\end{enumerate}
\end{cl*}
\begin{proof}
We simultaneously prove clauses 1 and 2 by induction on the construction of $B$. 
\begin{itemize}
    \item $B$ is a propositional variable $p$. \\
    (1). Suppose $a \Vdash^* p$, then $\lambda_i \land \chi_a^i$ is a disjunct of $g(p)$. 
    Thus, $\ZFC \vdash \lambda_i \land \chi_a^i \to \gpf(p)$. 

    (2). Suppose $a \nVdash^* p$. 
    Let $\lambda_j \land \chi_b^j$ be any disjunct of $g(p)$. 
    If $i \neq j$, then $\ZFC \vdash \lambda_i \to \neg \lambda_j$ by Theorem \ref{SS}.(1). 
    If $i = j$, then $b \neq a$ because $b \Vdash^* p$, and hence $\ZFC \vdash \chi_a^i \to \neg \chi_b^i$ by Theorem \ref{HLS}.(1). 
    In either case, we have $\ZFC \vdash \lambda_i \land \chi_a^i \to \neg (\lambda_j \land \chi_b^j)$. 
    We conclude $\ZFC \vdash \lambda_i \land \chi_a^i \to \neg \gpf(p)$. 
\end{itemize}
The cases of $\neg$, $\land$, $\lor$, and $\to$ follow from the induction hypothesis. 
It suffices to prove the cases that $B$ is $\BP C$ and $B$ is $\BF C$, where the claim holds for $C$. 

\begin{itemize}
    \item $B$ is $\BP C$. \\
    (1). Suppose $a \Vdash^* \BP C$. 
        Let $j$ be any cluster with $i \accp^* j$ and let $b \in j$ be any element. 
        Then, we have $a \accp^* b$, and so $b \Vdash^* C$. 
        By the induction hypothesis, $\ZFC \vdash \lambda_j \land \chi_b^j \to \gpf(C)$. 
        Because $b \in j$ is arbitrary, $\ZFC \vdash \lambda_j \land \bigvee_{b \in j} \chi_b^j \to \gpf(C)$. 
        Since $\ZFC \vdash \bigvee_{b \in j} \chi_b^j$ by Theorem \ref{HLS}.(2), we obtain $\ZFC \vdash \lambda_j \to \gpf(C)$. 
        We also have $\ZFC \vdash \bigvee_{i \accp^* j} \lambda_j \to \gpf(C)$, and so
        \[
            \ZFC \vdash \Prov \bigvee_{i \accp^* j} \lambda_j \to \gpf(\BP C).
        \]
        Since $\ZFC \vdash \lambda_i \to \Prov \bigvee_{i \accp^* j} \lambda_j$ by Theorem \ref{SS}.(3), we conclude $\ZFC \vdash \lambda_i \to \gpf(\BP C)$. 
        
    (2). Suppose $a \nVdash^* \BP C$, then we find a cluster $j$ and $b \in j$ such that $i \accp^* j$ and $b \nVdash^* C$. 
    By the induction hypothesis, $\ZFC \vdash \lambda_j \land \chi_b^j \to \neg \gpf(C)$. 
    Then, $\ZFC \vdash \Con (\lambda_j \land \chi_b^j) \to \neg \gpf(\BP C)$.
    By Proposition \ref{con_HL}, we have $\ZFC \vdash \Con \lambda_j \to \Con (\lambda_j \land \chi_b^j)$, and hence $\ZFC \vdash \Con \lambda_j \to \neg \gpf(\BP C)$.
    Since $\ZFC \vdash \lambda_i \to \Con \lambda_j$ by Theorem \ref{SS}.(4), we conclude $\ZFC \vdash \lambda_i \to \neg \gpf(\BP C)$. 

    \item $B$ is $\BF C$. \\
    (1). Suppose $a \Vdash^* \BF C$. 
    Let $b \in i$ be such that $a \accf^* b$. 
    Since $b \Vdash^* C$, by the induction hypothesis, $\ZFC \vdash \lambda_i \land \chi_b^i \to \gpf(C)$. 
    Since $b$ is arbitrary, $\ZFC \vdash \lambda_i \land \bigvee_{a \accf^* b} \chi_b^i \to \gpf(C)$. 
    So, we obtain
    \[
        \ZFC \vdash \ForceB \lambda_i \land \ForceB \bigvee_{a \accf^* b} \chi_{b}^i \to \gpf(\BF C). 
    \]
    By Proposition \ref{prop_arith}, $\ZFC \vdash \lambda_i \to \ForceB \lambda_i$ because $\lambda_i$ is arithmetical. 
    Also, we have $\ZFC \vdash \chi_{a}^i \to \ForceB \bigvee_{a \accf^* b} \chi_{b}^i$ by Theorem \ref{HLS}.(3). 
    Thus, we conclude $\ZFC \vdash \lambda_i \land \chi_{a}^i \to \gpf(\BF C)$. 
    
    (2). Suppose $a \nVdash^* \BF C$, then we find a $b \in i$ such that $a \accf^* b$ and $b \nVdash^* C$.
    By the induction hypothesis, $\ZFC \vdash \lambda_i \land \chi_{b}^i \to \neg \gpf(C)$, and then
    \[
        \ZFC \vdash \ForceD (\lambda_i \land \chi_{b}^i) \to \neg \gpf(\BF C). 
    \]
    Since $a \accf^* b$, we have $\ZFC \vdash \chi_{a}^i \to \ForceD \chi_{b}^i$ by Theorem \ref{HLS}.(4). 
    By Proposition \ref{prop_arith}, $\ZFC \vdash \lambda_i \to \ForceB \lambda_i$, and thus $\ZFC \vdash \lambda_i \land \chi_{a}^i \to \ForceD (\lambda_i \land \chi_{b}^i)$. 
    Therefore, we conclude $\ZFC \vdash \lambda_i \land \chi_{a}^i \to \neg \gpf(\BF C)$. \qedhere
    \end{itemize}
\end{proof}
Since $r \nVdash^* A$, by the claim, $\ZFC \vdash \lambda_{\mc C_r} \land \chi_{r}^{\mc C_r} \to \neg \gpf(A)$. 
Thus, $\ZFC \vdash \Con (\lambda_{\mc C_r} \land \chi_{r}^{\mc C_r}) \to \neg \gpf(\BP A)$. 
Since $\mc C_0 \accp^* \mc C_r$, we have $\ZFC \vdash \lambda_{\mc C_0} \to \Con \lambda_{\mc C_r}$ by Theorem \ref{SS}.(3). 
By Theorem \ref{con_HL}, $\ZFC \vdash \Con \lambda_{\mc C_r} \to \Con (\lambda_{\mc C_r} \land \chi_{r}^{\mc C_r})$. 
Therefore, we obtain $\ZFC \vdash \lambda_{\mc C_0} \to \neg \gpf(\BP A)$. 
Since $\ZFC \nvdash \neg \lambda_{\mc C_0}$ by Theorem \ref{SS}.(6), we have $\ZFC \nvdash \gpf(\BP A)$. 
In particular, $\ZFC \nvdash \gpf(A)$. 
\end{proof}

\section{The logic $\PFo$ for $\omega$-models}\label{sec_pfo}

In addition to the study of $\GL$, Solovay also investigated the truth provability logic. 

\begin{defn}
The logic $\mathbf{S}$ in the language $\LP$ is defined as follows: 
    \begin{itemize}
        \item The axioms of $\mathbf{S}$ are:
        \begin{enumerate}
            \item All theorems of $\GL$. 
            \item $\BP A \to A$. 
        \end{enumerate}
        \item The sole inference rule of $\mathbf{S}$ is modus ponens. 
    \end{itemize}
\end{defn}

Actually, Solovay proved that $\mathbf{S}$ is exactly the modal logic of the provability true in the standard model of arithmetic. 
We say a model $M$ of $\ZFC$ is an \textit{$\omega$-model} iff $\omega^M$ is the set of all standard natural numbers. 
In the context of set theory, Solovay's second theorem is stated as follows: 

\begin{thm}[Solovay \cite{Sol76}]
For any $\omega$-model $M$ of $\ZFC$ and $\LP$-formula $A$, the following are equivalent: 
\begin{enumerate}
    \item $\mathbf{S} \vdash A$. 
    \item $M \vDash \gp(A)$ for all translations $g$. 
\end{enumerate}
\end{thm}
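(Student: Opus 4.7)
My plan mirrors the structure of the proof of Theorem~\ref{MT}, treating the two directions separately.

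The direction $(1 \Rightarrow 2)$ is a soundness check: one verifies each $\mathbf{S}$-axiom under every translation and uses that truth in $M$ is preserved by modus ponens. All $\GL$-axioms are $\ZFC$-provable by Theorem~\ref{DC}, hence hold in $M$. The extra axiom $\BP A \to A$ translates to $\Prov \gp(A) \to \gp(A)$; if $M \vDash \Prov \gp(A)$, then since $\omega^M$ is standard and the provability predicate is $\Sigma_1$, a genuine $\ZFC$-proof of $\gp(A)$ exists, so $M \vDash \gp(A)$ because $M \vDash \ZFC$.

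The substantive direction $(2 \Rightarrow 1)$ I would prove by contrapositive. Assume $\mathbf{S} \nvdash A$. Since $\mathbf{S} \vdash A$ iff $\GL \vdash \bigwedge_i (\BP B_i \to B_i) \to A$ for some $B_1, \dots, B_n$, specialising to $B_i$ ranging over $\Sub(A)$ and applying the finite frame property of $\GL$, there exist a finite $\GL$-model $\mc M = (W, \accp, \Vdash)$ and $w \in W$ with $w \nVdash A$ and $w \Vdash \BP B \to B$ for every $B \in \Sub(A)$. After restricting to the submodel generated by $w$, I would add one new element $0$ below, with $0 \accp^* y$ for every $y \in W$, to form a finite rooted $\GL$-frame $(W^*, \accp^*)$ with root $0$ and $W^* = \{0\} \cup W$. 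Set $0 \Vdash^* p :\iff w \Vdash p$ for each propositional variable $p$. A straightforward induction on $B \in \Sub(A) \cup \{A\}$, using $w \Vdash \BP B \to B$ in the modal case, establishes the copying property $0 \Vdash^* B \iff w \Vdash B$; in particular $0 \nVdash^* A$. Then I would apply Theorem~\ref{SS} to $(W^*, \accp^*)$ to obtain arithmetical sentences $\{\lambda_i\}_{i \in W^*}$ and set $g(p) := \bigvee_{i \Vdash^* p} \lambda_i$.

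The crux is the simultaneous induction on $B \in \Sub(A) \cup \{A\}$ of the two statements: for every $i \in W^*$, if $i \Vdash^* B$ then $\ZFC \vdash \lambda_i \to \gp(B)$, and if $i \nVdash^* B$ then $\ZFC \vdash \lambda_i \to \neg \gp(B)$. For $i \neq 0$ this is the usual Solovay induction via clauses~(3)--(5) of Theorem~\ref{SS}, exactly as in the proof of Theorem~\ref{MT}. The case $i = 0$ is the main obstacle I anticipate, because clauses~(3) and~(5) are unavailable at the root. My workaround is to exploit the $\mathbf{S}$-reflexivity of $w$: in the $\BP C$-subcase of the positive statement at $0$, the hypothesis $0 \Vdash^* \BP C$ yields $w \Vdash \BP C$ by the copying property, and then $w \Vdash C$ via $w \Vdash \BP C \to C$ (using $C \in \Sub(A)$), so $0 \Vdash^* C$ again by the copying property. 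Combining the inductive hypothesis for $C$ at every $i \in W^*$, including $i = 0$, with clause~(2) of Theorem~\ref{SS} gives $\ZFC \vdash \gp(C)$, hence $\ZFC \vdash \Prov \gp(C)$ by necessitation, and a fortiori $\ZFC \vdash \lambda_0 \to \gp(\BP C)$. The negative subcase at $0$ is routine from clauses~(1) and~(4) of Theorem~\ref{SS}, using that $0 \accp^* j$ for every $j \in W$.

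With the key claim established, the negative statement at $i = 0$ applied to $B = A$ yields $\ZFC \vdash \lambda_0 \to \neg \gp(A)$. Because $M$ is an $\omega$-model, Solovay's arithmetical walk is absolute between $M$ and the standard model and converges at the root $0$ there; consequently $M \vDash \lambda_0$, and therefore $M \vDash \neg \gp(A)$, as required.
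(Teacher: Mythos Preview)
The paper does not supply its own proof of this statement; it is quoted as Solovay's theorem without argument. Your proof is correct and is the classical one. It also matches, step for step, the paper's proof of the bimodal analogue Theorem~\ref{compl_PFo1}: there the root cluster $\mc C_r$ is duplicated to a copy $\mc C^*$ placed strictly below (just as you duplicate $w$ to $0$), the truth lemma is extended to the new bottom element using the reflection $r \Vdash^* \BP B \to \BF B$ (your $w \Vdash \BP B \to B$), and the positive $\BP C$-case at the bottom is handled by showing $\ZFC \vdash \gpf(C)$ outright via clause~(2) of Theorem~\ref{SS}, exactly as you do.

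For the final step, the paper's formulation (in the proof of Theorem~\ref{compl_PFo1}) is cleaner than an appeal to absoluteness of the Solovay walk and uses only the listed properties of the $\lambda_i$: for each $i \neq 0$ one has $\ZFC \vdash \lambda_i \to \Prov \neg \lambda_i$ by clause~(5) of Theorem~\ref{SS}; since $M$ is an $\omega$-model, $M \vDash \Prov \neg \lambda_i \to \neg \lambda_i$, whence $M \vDash \neg \lambda_i$; then clause~(2) forces $M \vDash \lambda_0$.
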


Inspired by this observation, in this section, we study the modal logic of provability and forcing true in $\omega$-models. 
We introduce the logic $\PFo$ which is an analogue of $\mathbf{S}$. 

\begin{defn}
    We define the logic $\PFo$ in the language $\LPF$ as follows: 
    \begin{itemize}
        \item The axioms of $\PFo$ are:
        \begin{enumerate}
            \item All theorems of $\PF$. 
            \item $\BP A \to \BF A$. 
        \end{enumerate}
        \item The sole inference rule of $\PFo$ is modus ponens. 
    \end{itemize}
\end{defn}

The following proposition says that the axiom $\BP A \to \BF A$ of $\PFo$ can be replaced by $\BF (\BP A \to A)$. 

\begin{prop}
For any $\LPF$-formula $A$, 
\[
    \PF \vdash (\BP A \to \BF A) \leftrightarrow \BF (\BP A \to A).
\]
\end{prop}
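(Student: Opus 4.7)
My plan is to prove the two implications separately, using only the $\PF$-axioms governing the interaction of $\BP$ and $\BF$ together with the $\mathbf{K}$-axiom and necessitation for $\BF$. For the easy direction $\BF(\BP A \to A) \to (\BP A \to \BF A)$, I would assume $\BF(\BP A \to A)$ and $\BP A$, apply the interaction axiom $\BP A \to \BF \BP A$ to derive $\BF \BP A$, and then invoke the $\mathbf{K}$-axiom for $\BF$ to conclude $\BF A$.

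The harder direction $(\BP A \to \BF A) \to \BF(\BP A \to A)$ I would handle by splitting into cases on $\BP A$ inside $\PF$. In the case $\BP A$, combine the hypothesis $\BP A \to \BF A$ with the axiom $\BP A \to \BF \BP A$ to get $\BP A \to (\BF A \land \BF \BP A)$, distribute $\BF$ over the conjunction (a standard consequence of $\mathbf{K}$ and necessitation of a tautology), and weaken through the tautology $\BP A \land A \to (\BP A \to A)$ to obtain $\BP A \to \BF(\BP A \to A)$. In the case $\neg \BP A$, I would first derive the dual interaction principle $\neg \BP A \to \BF \neg \BP A$ as an instance of the axiom $\DP B \to \BF \DP B$ with $B := \neg A$, using that $\BP \neg \neg A$ is $\PF$-equivalent to $\BP A$; then necessitating the tautology $\neg \BP A \to (\BP A \to A)$ and applying $\mathbf{K}$ yields $\neg \BP A \to \BF(\BP A \to A)$. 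Combining the two cases with the tautology $\BP A \lor \neg \BP A$ finally produces $\BF(\BP A \to A)$.

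The main subtlety will be keeping track of which steps use the hypothesis $\BP A \to \BF A$: since the deduction theorem for normal modal logics requires that the hypothesis never be necessitated, I must make sure that the hypothesis enters only through propositional combinations with axioms and necessitated tautologies. The axiom $\DP A \to \BF \DP A$ is what supplies the missing piece in the $\neg \BP A$ branch, and without it the case analysis would fail, so this proposition really does exercise all three interaction axioms of $\PF$.
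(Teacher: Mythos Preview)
Your proof is correct and follows essentially the same case-split approach as the paper: both handle $(\leftarrow)$ via $\BP A \to \BF \BP A$ together with the $\mathbf{K}$-axiom for $\BF$, and both split $(\rightarrow)$ into the cases $\BP A$ and $\neg \BP A$, using the axiom $\DP B \to \BF \DP B$ (with $B = \neg A$) to get $\neg \BP A \to \BF \neg \BP A$ in the latter case. One small correction to your closing remark: the third interaction axiom $\BP A \to \BP \BF A$ is never used, and in the $\BP A$ case the paper obtains $\BF(\BP A \to A)$ directly from $\BF A$ via the tautology $A \to (\BP A \to A)$, so your detour through $\BF \BP A$ there is harmless but unnecessary.
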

\begin{proof}
$(\rightarrow)$: Since $\PF \vdash (\BP A \to \BF A) \land \BP A \to \BF A$, we have $\PF \vdash (\BP A \to \BF A) \land \BP A \to \BF (\BP A \to A)$. 
On the other hand, $\PF \vdash \neg \BP A \to \BF \neg \BP A$, and hence $\PF \vdash \neg \BP A \to \BF (\BP A \to A)$. 
By the law of excluded middle, we conclude $\PF \vdash (\BP A \to \BF A) \to \BF (\BP A \to A)$. 

$(\leftarrow)$: This implication follows from $\PF \vdash \BF (\BP A \to A) \to (\BF \BP A \to \BF A)$ and $\PF \vdash \BP A \to \BF \BP A$. 
\end{proof}

\begin{prop}\label{soundness_PFo}
Let $M$ be any $\omega$-model of $\ZFC$, $A$ be any $\LPF$-formula, and $g$ be any translation. 
If $\PFo \vdash A$, then $M \vDash \gpf(A)$. 
\end{prop}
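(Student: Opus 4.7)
The plan is to prove the proposition by induction on the length of a $\PFo$-proof of $A$. Because the only inference rule of $\PFo$ is modus ponens, the inductive step is entirely routine: if $M \vDash \gpf(B)$ and $M \vDash \gpf(B \to A) = \gpf(B) \to \gpf(A)$, then $M \vDash \gpf(A)$. So all the content lies in verifying that every axiom of $\PFo$ is mapped by $\gpf$ to a sentence true in $M$.

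For the axioms inherited from $\PF$, there is nothing to do: by Proposition \ref{soundness_PF}, if $B$ is a theorem of $\PF$ then $\ZFC \vdash \gpf(B)$, and since $M \vDash \ZFC$ we immediately obtain $M \vDash \gpf(B)$. The only genuinely new case is the axiom scheme $\BP B \to \BF B$, where we must show
\[
    M \vDash \Prov \gpf(B) \to \ForceB \gpf(B).
\]
This is where I would use the $\omega$-model hypothesis. Assume $M \vDash \Prov \gpf(B)$. Since $\Prov \gpf(B)$ is an arithmetical $\LS$-sentence and $\omega^M$ is standard, arithmetical truth is absolute between $M$ and the real world, so $\Prov \gpf(B)$ is in fact true, i.e.\ $\ZFC \vdash \gpf(B)$. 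Then Theorem \ref{soundness_SF}.(1) gives $\ZFC \vdash \ForceB \gpf(B)$, and hence $M \vDash \ForceB \gpf(B)$, as required.

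The only conceptual obstacle is the very reason the axiom scheme $\BP A \to \BF A$ could not simply be added as a normal axiom of $\PF$: necessitation applied to $\BP A \to \BF A$ would yield disastrous consequences like $\BP(\BP A \to \BF A)$, which need not be $\ZFC$-provable. This is precisely why $\PFo$ restricts its inference rules to modus ponens only, and the proof above works without modification exactly because of that restriction. Thus the absoluteness of the arithmetical provability predicate for $\omega$-models is doing all the real work, and the rest is bookkeeping.
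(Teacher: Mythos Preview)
Your proof is correct and follows essentially the same approach as the paper: induction on the length of the $\PFo$-derivation, using Proposition~\ref{soundness_PF} for the $\PF$-axioms, the $\omega$-model hypothesis to pass from $M \vDash \Prov \gpf(B)$ to the external fact $\ZFC \vdash \gpf(B)$ for the new axiom scheme, and a routine modus ponens step. Your invocation of Theorem~\ref{soundness_SF}.(1) to get $\ZFC \vdash \ForceB \gpf(B)$ is exactly what the paper does (it writes this as $\ZFC \vdash \gpf(\BF B)$, which is the same thing).
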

\begin{proof}
We prove the proposition by induction on the length of proofs of $A$ in $\PFo$. 
\begin{itemize}
    \item If $A$ is a theorem of $\PF$, then by Proposition \ref{soundness_PF}, we have $\ZFC \vdash \gpf(A)$. 
    Since $M$ is a model of $\ZFC$, we get $M \vDash \gpf(A)$. 

    \item Suppose that $A$ is of the form $\BP B \to \BF B$. 
    Assume $M \vDash \gpf(\BP B)$. 
    Then, $M \vDash \Prov \gpf(B)$. 
    Since $M$ is an $\omega$-model, this implies that $\ZFC \vdash \gpf(B)$. 
    Then, $\ZFC \vdash \gpf(\BF B)$. 
    Since $M \vDash \ZFC$, we obtain $M \vDash \gpf(\BF B)$. 

    \item Suppose that $\PFo \vdash B$, $\PFo \vdash B \to A$, and the proposition holds for $B$ and $B \to A$. 
    Then, $M \vDash \gp(B)$ and $M \vDash \gpf(B \to A)$. 
    We have $M \vDash \gpf(A)$. \qedhere
\end{itemize}
\end{proof}

\begin{defn}
    For each formula $A$, let $\Phi(A)$ be the set
    \[
        \{\BP B \to \BF B \mid \BP B\ \in \Sub(A)\}. 
    \]
\end{defn}

We prove that the modal logic of provability and forcing of every $\omega$-model of $\ZFC + \mathrm{V} = \CU$ is exactly the logic $\PFo$. 

\begin{thm}\label{compl_PFo1}
    Let $M$ be an $\omega$-model of $\ZFC + \mathrm{V} = \CU$. 
    For any $\LPF$-formula $A$, the following are equivalent: 
\begin{enumerate}
    \item $\PF \vdash \bigwedge \Phi(A) \to A$. 
    \item $\PF^\omega \vdash A$.
    \item $M \vDash \gpf(A)$ for any translation $g$. 
\end{enumerate}
\end{thm}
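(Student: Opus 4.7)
The plan is to establish the three implications $(1) \Rightarrow (2) \Rightarrow (3) \Rightarrow (1)$, with the substantive work in the last. Since $\PFo$ contains $\PF$ and each element of $\Phi(A)$ is an axiom of $\PFo$, the implication $(1) \Rightarrow (2)$ follows immediately by modus ponens, and $(2) \Rightarrow (3)$ is Proposition \ref{soundness_PFo}.

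For the contrapositive of $(3) \Rightarrow (1)$, suppose $\PF \nvdash \bigwedge \Phi(A) \to A$. By the finite frame property (Theorem \ref{ffp_PF}), I obtain a finite rooted nice PBA $\PF$-model $\mc M = (W, \accp, \accf, \Vdash)$ with $\accf$-root element $r$ of the $\accp$-root cluster $\mc C_r$ such that $r \Vdash \bigwedge \Phi(A)$ and $r \nVdash A$. Unlike the proof of Theorem \ref{MT}, I do not adjoin a new bottom cluster; instead $\mc C_r$ itself plays the role of $M$. Let $\{\lambda_i\}$ be Solovay sentences for the rooted $\GL$-frame $(W/{\sim^+}, \accp)$ with root $\mc C_r$, and for each cluster $i$ let $\{\chi_a^i\}$ be HL sentences of the finite pre-Boolean algebra $(i, \accf)$. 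Two facts about $M$ are needed. First, $M \vDash \lambda_{\mc C_r}$: if $M \vDash \lambda_k$ for some $k \neq \mc C_r$, then by Theorem \ref{SS}.(5) we have $M \vDash \Prov \neg \lambda_k$, whence $\ZFC \vdash \neg \lambda_k$ since $M$ is an $\omega$-model, contradicting $M \vDash \lambda_k$; hence Theorem \ref{SS}.(2) forces $M \vDash \lambda_{\mc C_r}$. Second, $M \vDash \chi_r^{\mc C_r}$, which follows from Theorem \ref{HLS}.(5) together with $M \vDash \mathrm{V} = \CU$. I then define the translation $g$ by $g(p) := \bigvee_{a \in i,\, a \Vdash p} (\lambda_i \land \chi_a^i)$, exactly as in Theorem \ref{MT}.

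The heart of the argument is the following claim, proved by simultaneous induction on $B$: for every cluster $i$, every $a \in i$, and every $B \in \Sub(A)$, one has $\ZFC \vdash \lambda_i \land \chi_a^i \to \gpf(B)$ if $a \Vdash B$, and $\ZFC \vdash \lambda_i \land \chi_a^i \to \neg \gpf(B)$ if $a \nVdash B$. Every case except the positive case of $B = \BP C$ at $i = \mc C_r$ is handled verbatim as in the proof of Theorem \ref{MT}, using Theorems \ref{SS} and \ref{HLS} together with Proposition \ref{con_HL}; in particular, the negative case at $\mc C_r$ works because any falsifying world for $C$ must lie in a proper successor cluster, so Theorem \ref{SS}.(4) applies. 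The main obstacle is precisely the positive case $a \Vdash \BP C$ at $i = \mc C_r$, where Theorem \ref{SS}.(3) is unavailable because $\mc C_r$ is the root of the $\GL$-frame. Here the hypothesis $r \Vdash \bigwedge \Phi(A)$ is decisive: since $\BP C \in \Sub(A)$ and $\accp$ is cluster-uniform (Proposition \ref{Prop_accp}), $r \Vdash \BP C$, and so $r \Vdash \BF C$ by the $\Phi(A)$ axiom $\BP C \to \BF C$; because $r$ is the $\accf$-root of $\mc C_r$, this forces $C$ to hold at every element of $\mc C_r$ in addition to every element of every proper successor cluster. Applying the inductive hypothesis uniformly over every cluster $j$ and every $b \in j$, and collapsing via Theorem \ref{HLS}.(2) and Theorem \ref{SS}.(2), gives $\ZFC \vdash \gpf(C)$ and thus $\ZFC \vdash \Prov \gpf(C)$, as required. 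Once the claim is established, $r \nVdash A$ yields $\ZFC \vdash \lambda_{\mc C_r} \land \chi_r^{\mc C_r} \to \neg \gpf(A)$, and evaluating in $M$ gives $M \nvDash \gpf(A)$.
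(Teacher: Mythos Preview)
Your argument is correct, and the decisive idea—using the hypothesis $r \Vdash \bigwedge \Phi(A)$ to push $C$ throughout the root cluster in the one case where Theorem~\ref{SS}.(3) is unavailable—is exactly the paper's. The route differs in one structural respect: the paper first adjoins a \emph{copy} $\mc C^* = \{a^* \mid a \in \mc C_r\}$ of the root cluster below $\mc M$, so that in the enlarged model $\mc M^*$ the original $\mc C_r$ is no longer the $\accp$-root and the claim from Theorem~\ref{MT} applies to it verbatim; a separate induction over $\Sub(A)$ then handles $\mc C^*$ alone. You instead skip the duplication and run a single induction over $\Sub(A)$ uniformly across all clusters, isolating the positive $\BP C$ case at $\mc C_r$ by hand. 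This is a genuine simplification: nothing is lost, since the final evaluation in $M$ only requires $M \vDash \lambda_{\mc C_r} \land \chi_r^{\mc C_r}$, which you obtain exactly as the paper obtains $M \vDash \lambda_{\mc C^*} \land \chi_{r^*}^{\mc C^*}$ (via Theorem~\ref{SS}.(5) plus $\omega$-correctness, and Theorem~\ref{HLS}.(5) plus $\mathrm{V}=\CU$). The paper's extra layer mainly buys a cleaner black-box invocation of the Theorem~\ref{MT} claim for $\mc C_r$, at the cost of carrying the copy $\mc C^*$ through the argument.
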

\begin{proof}
$(1 \Rightarrow 2)$: Suppose $\PF \vdash \bigwedge \Phi(A) \to A$. 
Then, $\PFo \vdash \bigwedge \Phi(A) \to A$. 
Since $\PFo \vdash \bigwedge \Phi(A)$, we conclude $\PFo \vdash A$. 

$(2 \Rightarrow 3)$: By Proposition \ref{soundness_PFo}. 

$(3 \Rightarrow 1)$: We prove the contrapositive. 
Suppose $\PFo \nvdash \bigwedge \Phi(A) \to A$. 
By Theorem \ref{ffp_PF}, we find a finite rooted nice PBA $\PF$-model $\mc M = (W, \accp, \accf, \Vdash)$ and a $\accf$-root element $r$ of the $\accp$-root cluster $\mc C_r$ such that $r \Vdash \bigwedge \Phi(A) \land \neg A$. 
For each $a \in \mc C_r$, we prepare a copy $a^*$ of $a$, and let $\mc C^* : = \{a^* \mid a \in \mc C_r\}$. 
We extend $\mc M$ to the Kripke model $\mc M^* = (W^*, \accp^*, \accf^*, \Vdash^*)$ as follows: 
\begin{itemize}
    \item $W^* = W \cup \mc C^*$, 
    \item $\accp^* = \accp \cup \{(a^*, x) \mid a \in \mc C_r$ and $x \in W\}$, 
    \item $\accf^* = \accf \cup \{(a^*, b^*) \mid a \accf b\}$, 
    \item for $a \in W$, $a \Vdash^* p$ iff $a \Vdash p$; and for $a \in \mc C_r$, $a^* \Vdash^* p$ iff $a \Vdash p$. 
\end{itemize}
It is shown that $\mc M^*$ is also a finite rooted nice PBA $\PF$-model and $r \Vdash^* \bigwedge \Phi(A) \land \neg A$. 
Let $\{\lambda_i\}$ be Solovay sentences of the finite rooted $\GL$-frame $(W^*/ {\sim^+}, \accp^*)$, where $i$ runs over clusters of $\mc M^*$. 
For each cluster $i$, let $\{\chi_a^i\}$ be HL sentences of the finite pre-Boolean algebra $(i, \accf^*)$. 
Let $g$ be the translation defined by 
\[
    g(p) = \bigvee_{\substack{a \in i \\ a \Vdash^* p}} (\lambda_i \land \chi_a^i).
\]

By the claim in the proof of Theorem \ref{MT}, we have already proved that for any $i \neq \mc C^*$, $a \in i$, and formula $B$, 
\begin{enumerate}
    \item If $a \Vdash^* B$, then $\ZFC \vdash \lambda_i \land \chi_a^i \to \gpf(B)$; 
    \item If $a \nVdash^* B$, then $\ZFC \vdash \lambda_i \land \chi_a^i \to \neg \gpf(B)$.  
\end{enumerate}

We prove the following claim: 

\begin{cl*}
    For any $a \in \mc C_r$ and any subformula $B$ of $A$, 
\begin{enumerate}
    \item If $a \Vdash^* B$, then $\ZFC \vdash \lambda_{\mc C^*} \land \chi_{a^*}^{\mc C^*} \to \gpf(B)$; 
    \item If $a \nVdash^* B$, then $\ZFC \vdash \lambda_{\mc C^*} \land \chi_{a^*}^{\mc C^*} \to \neg \gpf(B)$.  
\end{enumerate}
\end{cl*}
\begin{proof}
We simultaneously prove clauses 1 and 2 by induction on the construction of $B$. 
The case that $B$ is a propositional variable $p$ is proved in the similar way as in the proof of Theorem \ref{MT} by paying attention to the equivalence between $a^* \Vdash^* p$ and $a \Vdash^* p$. 
We prove only the cases that $B$ is $\BP C$ and $B$ is $\BF C$, where the claim holds for $C$. 

\begin{itemize}
    \item $B$ is $\BP C$. \\
    (1). Suppose $a \Vdash^* \BP C$. 
        Let $i$ be any cluster with $\mc C_r \accp^* i$ and let $b \in i$ be any element. 
        Then, we have $a \accp^* b$, and so $b \Vdash^* C$. 
        Since $i \neq \mc C^*$, we obtain $\ZFC \vdash \lambda_i \land \chi_b^i \to \gpf(C)$. 
        Then, $\ZFC \vdash \lambda_i \land \bigvee_{b \in i} \chi_b^i \to \gpf(C)$ because $b \in i$ is arbitrary. 
        Since $\ZFC$ proves $\bigvee_{b \in i} \chi_b^i$ by Theorem \ref{HLS}.(2), we obtain $\ZFC \vdash \lambda_i \to \gpf(C)$. 
        Hence, 
        \begin{equation}\label{BP}
            \ZFC \vdash \bigvee_{\mc C_r \accp^* i} \lambda_i \to \gpf(C).
        \end{equation}

        Since $\BP C$ is a subformula of $A$, it follows $r \Vdash^* \BP C \to \BF C$ from $r \Vdash^* \bigwedge \Phi(A)$. 
        For any $x \in W$ with $r \accp^* x$, we have $a \accp^* x$, so $x \Vdash^* C$. 
        So $r \Vdash^* \BP C$, and hence $r \Vdash^* \BF C$. 
        Let $b$ be an arbitrary element of $\mc C_r$. 
        Since $r \accf^* b$, we have $b \Vdash^* C$. 
        We get $\ZFC \vdash \lambda_{\mc C_r} \land \chi_b^{\mc C_r} \to \gpf(C)$. 
        Also, by the induction hypothesis, $\ZFC \vdash \lambda_{\mc C^*} \land \chi_{b^*}^{\mc C^*} \to \gpf(C)$. 
        Since $b \in \mc C_r$ is arbitrary, we obtain
        \[
            \ZFC \vdash \lambda_{\mc C_r} \land \bigvee_{b \in \mc C_r} \chi_b^{\mc C_r} \to \gpf(C)
        \]
        and 
        \[
            \ZFC \vdash \lambda_{\mc C^*} \land \bigvee_{b^* \in \mc C^*} \chi_{b^*}^{\mc C^*} \to \gpf(C). 
        \]
        Since $\ZFC$ proves $\bigvee_{b \in \mc C_r} \chi_b^{\mc C_r}$ and $\bigvee_{b^* \in \mc C^*} \chi_{b^*}^{\mc C^*}$, we have $\ZFC \vdash \lambda_{\mc C_r} \to \gpf(C)$ and $\ZFC \vdash \lambda_{\mc C^*} \to \gpf(C)$. 
        By combining them with (\ref{BP}), we obtain $\ZFC \vdash \bigvee_{i} \lambda_i \to \gpf(C)$. 
        Since $\ZFC$ also proves $\bigvee_{i} \lambda_i$ by Theorem \ref{SS}.(2), we get $\ZFC \vdash \gpf(C)$. 
        We conclude $\ZFC \vdash \gpf(\BP C)$. 
        
    (2). Suppose $a \nVdash^* \BP C$, then we find a cluster $i$ and an element $b \in i$ such that $\mc C_r \accp^* i$ and $b \nVdash^* C$. 
    Since $i \neq \mc C^*$, we have $\ZFC \vdash \lambda_i \land \chi_b^i \to \neg \gpf(C)$. 
    Then, $\ZFC \vdash \Con (\lambda_i \land \chi_b^i) \to \neg \gpf(\BP C)$.
    By Theorem \ref{con_HL}, we have $\ZFC \vdash \Con \lambda_i \to \Con (\lambda_i \land \chi_b^i)$, and hence $\ZFC \vdash \Con \lambda_i \to \neg \gpf(\BP C)$.
    Since $\mc C^* \accp^* i$, we get $\ZFC \vdash \lambda_{\mc C^*} \to \Con \lambda_i$ by Theorem \ref{SS}.(4). 
    Thus, we conclude $\ZFC \vdash \lambda_{\mc C^*} \to \neg \gpf(\BP C)$. 

    \item $B$ is $\BF C$. \\
    (1). Suppose $a \Vdash^* \BF C$. 
    Let $b \in \mc C_r$ be any element with $a \accf^* b$, then $b \Vdash^* C$. 
    By the induction hypothesis, we have $\ZFC \vdash \lambda_{\mc C^*} \land \chi_{b^*}^{\mc C^*} \to \gpf(C)$. 
    Since $b$ is arbitrary, $\ZFC \vdash \lambda_{\mc C^*} \land \bigvee_{a \accf^* b} \chi_{b^*}^{\mc C^*} \to \gpf(C)$. 
    Since $a \accf^* b$ is equivalent to $a^* \accf^* b^*$, we have $\ZFC \vdash \lambda_{\mc C^*} \land \bigvee_{a^* \accf^* b^*} \chi_{b^*}^{\mc C^*} \to \gpf(C)$. 
    So, We obtain
    \[
        \ZFC \vdash \ForceB \lambda_{\mc C^*} \land \ForceB \bigvee_{a^* \accf^* b^*} \chi_{b^*}^{\mc C^*} \to \gpf(\BF C). 
    \]
    Since $\ZFC \vdash \lambda_{\mc C^*} \to \ForceB \lambda_{\mc C^*}$ by Proposition \ref{prop_arith} and we have $\ZFC \vdash \chi_{a^*}^{\mc C^*} \to \ForceB \bigvee_{a^* \accf^* b^*} \chi_{b^*}^{\mc C^*}$ by Theorem \ref{HLS}.(3), we conclude
    \[
        \ZFC \vdash \lambda_{\mc C^*} \land \chi_{a^*}^{\mc C^*} \to \gpf(\BF C). 
    \]
    
    (2). Suppose $a \nVdash^* \BF C$, then we find a $b \in \mc C_r$ such that $a \accf^* b$ and $b \nVdash^* C$.
    By the induction hypothesis, we have $\ZFC \vdash \lambda_{\mc C^*} \land \chi_{b^*}^{\mc C^*} \to \neg \gpf(C)$, and then \[
        \ZFC \vdash \ForceD (\lambda_{\mc C^*} \land \chi_{b^*}^{\mc C^*}) \to \neg \gpf(\BF C). 
    \]
    Here, since $a^* \accf^* b^*$, we have $\ZFC \vdash \chi_{a^*}^{\mc C^*} \to \ForceD \chi_{b^*}^{\mc C^*}$ by Theorem \ref{HLS}.(4). 
    By Proposition \ref{prop_arith}, $\ZFC \vdash \lambda_{\mc C^*} \to \ForceB \lambda_{\mc C^*}$, and thus $\ZFC \vdash \lambda_{\mc C^*} \land \chi_{a^*}^{\mc C^*} \to \ForceD (\lambda_{\mc C^*} \land \chi_{b^*}^{\mc C^*})$. 
    Therefore, we conclude
    \[
        \ZFC \vdash \lambda_{\mc C^*} \land \chi_{a^*}^{\mc C^*} \to \neg \gpf(\BF C). \qedhere
    \]
    \end{itemize}
\end{proof}
Since $r \nVdash^* A$, by the claim, $\ZFC \vdash \lambda_{\mc C^*} \land \chi_{r^*}^{\mc C^*} \to \neg \gpf(A)$. 
Then, $M \vDash \lambda_{\mc C^*} \land \chi_{r^*}^{\mc C^*} \to \neg \gpf(A)$. 
For each $i \neq \mc C^*$, we have $M \vDash \lambda_i \to \Prov \neg \lambda_i$ by Theorem \ref{SS}.(5). 
Since $M$ is an $\omega$-model, $M \vDash \Prov \neg \lambda_i \to \neg \lambda_i$. 
Hence, $M \vDash \neg \lambda_i$. 
Since $M \vDash \bigwedge_{i \neq \mc C^*} \neg \lambda_i \to \lambda_{\mc C^*}$ by Theorem \ref{SS}.(2), we have $M \vDash \lambda_{\mc C^*}$. 
Also, $M \vDash \chi_{r^*}^{\mc C^*}$ by Theorem \ref{HLS}.(5) because $M \vDash \mathrm{V} = \CU$. 
Therefore, we conclude $M \nvDash \gpf(A)$. 
\end{proof}

The decidability of $\PFo$ immediately follows from the decidability of $\PF$ (Corollary \ref{dec_PF}) and Theorem \ref{compl_PFo1}. 

\begin{cor}
    The logic $\PFo$ is decidable. 
\end{cor}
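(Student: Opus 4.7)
The plan is to reduce derivability in $\PFo$ to derivability in $\PF$, exploiting Theorem \ref{compl_PFo1}, which provides a syntactic characterization:
\[
    \PFo \vdash A \iff \PF \vdash \bigwedge \Phi(A) \to A.
\]
Combined with the decidability of $\PF$ (Corollary \ref{dec_PF}), this gives an explicit decision procedure for $\PFo$.

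The algorithm I would implement is: given an input $\LPF$-formula $A$, first enumerate the (finite) set $\Sub(A)$ of subformulas of $A$, and from it effectively compute $\Phi(A) = \{\BP B \to \BF B \mid \BP B \in \Sub(A)\}$. This step is straightforward syntactic recursion on $A$. Next, form the formula $\bigwedge \Phi(A) \to A$, and invoke the decision procedure for $\PF$ guaranteed by Corollary \ref{dec_PF} to check whether $\PF \vdash \bigwedge \Phi(A) \to A$. Output this answer as the answer for $\PFo \vdash A$.

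Correctness is delivered entirely by Theorem \ref{compl_PFo1}: the implication $(1 \Rightarrow 2)$ of that theorem ensures that a positive answer from the $\PF$-oracle yields $\PFo \vdash A$, while the implication $(2 \Rightarrow 1)$ ensures that a negative answer yields $\PFo \nvdash A$. Termination is immediate from the fact that $\PF$ is decidable and that the reduction $A \mapsto \bigwedge \Phi(A) \to A$ is computable.

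Since Theorem \ref{compl_PFo1} has already been established and Corollary \ref{dec_PF} follows from the finite frame property of $\PF$ (Theorem \ref{ffp_PF}), there is no real obstacle left to overcome for this corollary itself; the hard work has been absorbed into the preceding results. The only minor conceptual point worth emphasizing is that the characterization in Theorem \ref{compl_PFo1} collapses the infinite schema of new axioms $\BP A \to \BF A$ to the relevant, finite subschema $\Phi(A)$ indexed by subformulas of the query $A$, which is precisely what is needed to make the reduction effective.
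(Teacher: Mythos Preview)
Your proposal is correct and matches the paper's approach exactly: the paper simply observes that decidability of $\PFo$ follows immediately from the equivalence $(1)\Leftrightarrow(2)$ of Theorem~\ref{compl_PFo1} together with the decidability of $\PF$ (Corollary~\ref{dec_PF}). Your write-up just makes explicit the effective reduction $A \mapsto \bigwedge \Phi(A) \to A$ that the paper leaves implicit.
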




If an $\omega$-model of $\ZFC$ exists, then $\PFo$ is also the modal logic of provability and forcing for all $\omega$-models of $\ZFC$. 

\begin{cor}\label{compl_PFo2}
Suppose that there exists an $\omega$-model of $\ZFC$. 
    For any $\LPF$-formula $A$, the following are equivalent: 
\begin{enumerate}
    \item $\PF^\omega \vdash A$.
    \item $M \vDash \gpf(A)$ for any $\omega$-model $M$ of $\ZFC$ and any translation $g$. 
\end{enumerate}
\end{cor}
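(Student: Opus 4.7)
The plan is to derive this corollary almost directly from Theorem \ref{compl_PFo1} by exploiting the fact that the existence of any $\omega$-model of $\ZFC$ already yields an $\omega$-model of $\ZFC + \mathrm{V} = \CU$, namely the inner model $\CU^{M_0}$ of any $\omega$-model $M_0$ of $\ZFC$.

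The direction $(1 \Rightarrow 2)$ is immediate from Proposition \ref{soundness_PFo}: any theorem of $\PFo$ is satisfied in every $\omega$-model of $\ZFC$ under every translation.

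For $(2 \Rightarrow 1)$, I would argue by contraposition. Suppose $\PFo \nvdash A$. By hypothesis there is some $\omega$-model $M_0$ of $\ZFC$. Consider the constructible inner model $\CU^{M_0}$. Since $\omega$ is absolute between any model of $\ZFC$ and its universe of constructibles (using $\ZFC \vdash \omega = \omega^{\CU}$ as recalled before Proposition \ref{arithAbsolute_forcing_L}), we have $\omega^{\CU^{M_0}} = \omega^{M_0}$, and the latter is standard. Hence $\CU^{M_0}$ is itself an $\omega$-model of $\ZFC$, and moreover satisfies $\mathrm{V} = \CU$. We can therefore apply Theorem \ref{compl_PFo1} to $\CU^{M_0}$: the failure $\PFo \nvdash A$ (clause 2 of that theorem) supplies some translation $g$ with $\CU^{M_0} \nvDash \gpf(A)$ (failure of clause 3). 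Since $\CU^{M_0}$ is an $\omega$-model of $\ZFC$, this witnesses the failure of condition 2 of the corollary.

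There is no real obstacle here beyond noting the absoluteness of $\omega$, which is needed to pass from an arbitrary $\omega$-model of $\ZFC$ to an $\omega$-model of $\ZFC + \mathrm{V} = \CU$; once that is in hand, Theorem \ref{compl_PFo1} and Proposition \ref{soundness_PFo} do all the work.
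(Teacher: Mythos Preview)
Your proof is correct and follows essentially the same approach as the paper: both directions use Proposition \ref{soundness_PFo} and Theorem \ref{compl_PFo1}, with the key step being the passage from an arbitrary $\omega$-model $M$ of $\ZFC$ to the $\omega$-model $\CU^{M}$ of $\ZFC + \mathrm{V} = \CU$. The only cosmetic difference is that the paper phrases $(2 \Rightarrow 1)$ directly rather than by contraposition, and it leaves the absoluteness of $\omega$ implicit where you make it explicit.
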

\begin{proof}
$(1 \Rightarrow 2)$: This is exactly Proposition \ref{soundness_PFo}. 

$(2 \Rightarrow 1)$: Suppose that condition 2 holds. 
Let $M$ be an $\omega$-model of $\ZFC$. 
Then, $\mathrm{L}^M$ is an $\omega$-model of $\ZFC + \mathrm{V} = \CU$. 
By the supposition, we have $\CU^M \vDash \gpf(A)$ for all translations $g$. 
By Theorem \ref{compl_PFo1}, we conclude $\PFo \vdash A$. 
\end{proof}

\section*{Acknowledgements}

The second author would like to thank Hiroshi Sakai for his valuable comments and significant suggestions under his supervision. 
The first author was supported by JSPS KAKENHI Grant Numbers JP19K14586 and JP23K03200.

\bibliographystyle{plain}
\bibliography{reference}

\appendix

\section{Some conservation results}\label{App1}

This appendix aims to prove the following conservation results:

\begin{enumerate}
    \item $\PF$ is a conservative extension of $\GL$. 
    \item $\PFo$ is a conservative extension of both $\mathbf{S}$ and $\SF$. 
\end{enumerate}

\begin{defn}
    We define the logic $\GT$ in the language $\LPF$ as follows: 
    \begin{itemize}
        \item The axioms of $\GT$ are:
        \begin{enumerate}
	\item All propositional tautologies in the language $\LPF$.  
	\item All axioms of $\GL$. 
    \item $A \leftrightarrow \BF A$. 
        \end{enumerate}
        \item The inference rules of $\GT$ are modus ponens and necessitation for $\BP$. 
    \end{itemize}
The logic $\GT$ is called the \textit{fusion} of $\GL$ and $\mathbf{Triv}$. 
\end{defn}

It is easy to see that $\PF$ is a sublogic of $\GT$. 

\begin{prop}\label{conservation_GT}
    $\GT$ is a conservative extension of $\GL$. 
\end{prop}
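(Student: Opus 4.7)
The plan is to prove the conservation result via Kripke semantics, following a standard pattern for fusions of modal logics. First I would identify the natural class of Kripke frames $(W,\accp,\accf)$ validating $\GT$: since the $\BF$-axiom is $A \leftrightarrow \BF A$, the appropriate condition on $\accf$ is that it is the identity relation on $W$ (so that $\BF$-modalities are trivialized); the conditions on $\accp$ remain those of a $\GL$-frame, namely transitivity and converse well-foundedness. I would then check soundness of $\GT$ with respect to this class of frames: the $\GL$-axioms and necessitation for $\BP$ are handled exactly as for $\GL$, and the axiom $A \leftrightarrow \BF A$ is immediate from $\accf$ being the identity.

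For completeness of this class with respect to $\GL$-consequences, I would argue contrapositively. Suppose $A$ is an $\LP$-formula with $\GL \nvdash A$. By the finite frame property of $\GL$ (Theorem \ref{ffp_GL}), there exists a finite rooted $\GL$-model $\mathcal{M} = (W, \accp, \Vdash)$ and a world $w \in W$ with $w \nVdash A$. I extend $\mathcal{M}$ to an $\LPF$-model $\mathcal{M}^* = (W, \accp, \accf, \Vdash^*)$ by setting $\accf$ to be the identity relation on $W$ and $\Vdash^*$ to agree with $\Vdash$ on all propositional variables. By the soundness observation above, $\mathcal{M}^*$ validates every theorem of $\GT$. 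Since $A$ is an $\LP$-formula (contains no occurrence of $\BF$), a straightforward induction on the construction of $A$ shows that for every $v \in W$, $v \Vdash^* A$ if and only if $v \Vdash A$ in $\mathcal{M}$. Hence $w \nVdash^* A$, which shows $\GT \nvdash A$.

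The argument is essentially routine; the only point that deserves care is recording that the soundness direction genuinely holds for the frame class described, and that $\GT$-theoremhood is captured by validity on all such frames (which is needed to conclude $\GT \nvdash A$ from the existence of a countermodel). Both facts follow by the canonical-model or filtration method applied separately to the two components of the fusion, but for the conservation claim itself we only need the easier direction: soundness of $\GT$ on the restricted class, together with completeness of $\GL$ on finite $\GL$-frames. I do not anticipate a serious obstacle; the essence is simply that making $\accf$ the identity collapses the $\BF$-fragment to triviality while leaving the $\BP$-fragment untouched, and $\LP$-formulas are blind to this collapse.
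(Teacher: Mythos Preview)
Your proposal is correct and is essentially the same argument the paper gives: take a $\GL$-countermodel $(W,\accp,\Vdash)$ of $A$, set $\accf$ to be the identity on $W$, observe that all $\GT$-theorems are valid on $(W,\accp,=)$, and conclude $\GT \nvdash A$. The paper's proof is slightly terser but the content is identical, including the observation that only soundness of $\GT$ on this frame class (not completeness) is needed.
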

\begin{proof}
    Let $A$ be any $\LP$-formula and suppose $\GL \nvdash A$. 
By Theorem \ref{ffp_GL}, we have a $\GL$-model $(W, \accp, \Vdash)$ in which $A$ is not valid. 
It is easy to see that all theorems of $\GT$ are valid on $(W, \accp, =)$, and hence by extending $\Vdash$ to the language $\LPF$, we obtain a $\GT$-model $(W, \accp, =, \Vdash)$ in which $A$ is not valid. 
Hence, $\GT \nvdash A$. 
\end{proof}

\begin{cor}
$\PF$ is a conservative extension of $\GL$. 
\end{cor}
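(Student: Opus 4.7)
The plan is to derive this directly from Proposition \ref{conservation_GT} together with the embedding of $\PF$ into $\GT$ already noted in the text. One direction is trivial: every axiom of $\GL$ is an axiom of $\PF$, and both logics share modus ponens and the necessitation rule for $\BP$, so $\GL \vdash A$ immediately implies $\PF \vdash A$ for any $\LP$-formula $A$.

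For the nontrivial direction, I would first verify that $\PF$ is indeed a sublogic of $\GT$ (which the text asserts). This amounts to checking that each axiom of $\PF$ is provable in $\GT$. The propositional tautologies and $\GL$-axioms are shared. Each $\SF$-axiom becomes trivial in $\GT$ because $A \leftrightarrow \BF A$ reduces it to a propositional tautology or to an instance of a $\GL$-theorem. The three interaction axioms collapse similarly: modulo $A \leftrightarrow \BF A$, $\BP A \to \BF \BP A$ becomes $\BP A \to \BP A$, and likewise for $\DP A \to \BF \DP A$ and $\BP A \to \BP \BF A$. Hence $\PF \vdash A$ implies $\GT \vdash A$ for every $\LPF$-formula.

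Now let $A$ be an $\LP$-formula with $\PF \vdash A$. By the previous paragraph, $\GT \vdash A$, and then by Proposition \ref{conservation_GT} we conclude $\GL \vdash A$. Combined with the trivial direction, this gives the claimed conservativity.

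I do not anticipate any real obstacle here: both ingredients are already in place, and the proof is essentially a one-line chaining argument. The only point that requires a brief check is the inclusion $\PF \subseteq \GT$, but as indicated above each $\PF$-axiom is immediate modulo the collapsing axiom $A \leftrightarrow \BF A$ of $\GT$.
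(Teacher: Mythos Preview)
Your proposal is correct and follows exactly the intended route: the paper leaves the corollary unproved because it is immediate from Proposition \ref{conservation_GT} together with the already-stated inclusion $\PF \subseteq \GT$, and you have simply spelled out that chaining. The only minor point worth noting is that for the axiom $\BP A \to \BP \BF A$ one needs $\BP(\BF A \leftrightarrow A)$ rather than just $\BF A \leftrightarrow A$, but this follows from necessitation for $\BP$ in $\GT$, so the argument goes through.
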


As a corollary to the proposition, we also obtain the conservation result between $\PFo$ and $\mathbf{S}$. 

\begin{cor}
$\PFo$ is a conservative extension of $\mathbf{S}$. 
\end{cor}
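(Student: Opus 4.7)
The plan is a purely syntactic erasure argument. Define the translation $t: \LPF \to \LP$ recursively so that $t$ commutes with every propositional connective and with $\BP$, while $(\BF B)^t := B^t$. Clearly $A^t = A$ whenever $A \in \LP$, so the conservation result reduces to the claim that $\PFo \vdash B$ implies $\mathbf{S} \vdash B^t$ for every $\LPF$-formula $B$.

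The key lemma toward this claim is: $\PF \vdash C$ implies $\GL \vdash C^t$ for every $\LPF$-formula $C$. This is proved by induction on the length of a $\PF$-proof of $C$. Propositional tautologies and the $\GL$-axioms translate to instances of themselves. Each $\SF$-axiom becomes a propositional tautology under $t$ since outermost $\BF$'s disappear; for instance $(\BF(A \to B) \to (\BF A \to \BF B))^t$ is $(A^t \to B^t) \to (A^t \to B^t)$, $(\BF A \to A)^t$ is $A^t \to A^t$, and $(\DF \BF A \to \BF \DF A)^t$ is $\neg \neg A^t \to \neg \neg A^t$. Each of the three interaction axioms of $\PF$ likewise flattens to a tautology such as $\BP A^t \to \BP A^t$ or $\DP A^t \to \DP A^t$. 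Modus ponens and $\BP$-necessitation are preserved by the corresponding rules of $\GL$, and $\BF$-necessitation is trivially preserved because $(\BF C)^t = C^t$.

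With this lemma in hand, induct on $\PFo$-proofs: any $\PFo$-axiom that is a theorem of $\PF$ has $t$-image in $\GL \subseteq \mathbf{S}$, while the new axiom $\BP A \to \BF A$ maps under $t$ to $\BP A^t \to A^t$, which is an instance of the axiom $\BP B \to B$ of $\mathbf{S}$. Modus ponens, the sole inference rule of $\PFo$, is preserved. Specializing to $B = A \in \LP$ then yields the desired conservation. The main step requiring attention is the lemma, but its verification is mechanical; this syntactic erasure seems preferable to attempting a semantic analogue of the $\PF$-over-$\GL$ argument, because $\mathbf{S}$ has only modus ponens as its rule and therefore does not admit a straightforward Kripke frame semantics on which to base a countermodel construction.
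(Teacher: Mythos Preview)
Your proof is correct and takes a genuinely different, more elementary route than the paper. The paper argues as follows: from $\PFo \vdash A$ it invokes Theorem~\ref{compl_PFo1} to obtain $\PF \vdash \bigwedge \Phi(A) \to A$; passes to the fusion $\GT$, where each $\BP B \to \BF B$ is equivalent to $\BP B \to B$; applies the (semantically proved) conservation of $\GT$ over $\GL$ to get $\GL \vdash \bigwedge \Psi(A) \to A$ with $\Psi(A) = \{\BP B \to B \mid \BP B \in \Sub(A)\}$; and concludes in $\mathbf{S}$. Your erasure translation $t$ is exactly the syntactic counterpart of collapsing $\accf$ to the identity in a $\GT$-frame, and your direct induction on $\PFo$-proofs bypasses the detour through $\Phi(A)$ altogether. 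What you gain is self-containment: no Kripke semantics is used, and in particular you avoid Theorem~\ref{compl_PFo1}, whose proof rests on the finite frame property of $\PF$ (Theorem~\ref{ffp_PF}) and the set-theoretic completeness machinery. What the paper's route buys is brevity once that machinery is already in place, together with an explicit parallel to the classical reduction of $\mathbf{S}$ to $\GL$ via reflection instances. Your closing remark that $\mathbf{S}$ lacks a straightforward frame semantics is slightly beside the point---the paper never needs frames for $\mathbf{S}$, only for $\GL$ and $\PF$---but this does not affect the correctness of your argument.
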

\begin{proof}
Let $A$ be any $\LP$-formula and suppose $\PFo \vdash A$. 
By Theorem \ref{compl_PFo1}, we have $\PF \vdash \bigwedge \Phi(A) \to A$. 
Then, $\GT \vdash \bigwedge \Phi(A) \to A$. 
Let $\Psi(A)$ be the set $\{\BP B \to B \mid \BP B \in \Sub(A)\}$ of $\LP$-formulas, then $\GT \vdash \bigwedge \Psi(A) \to A$. 
By Proposition \ref{conservation_GT}, we obtain $\GL \vdash \bigwedge \Psi(A) \to A$. 
Since $\mathbf{S}$ is an extension of $\GL$ proving $\bigwedge \Psi(A)$, we conclude $\mathbf{S} \vdash A$. 
\end{proof}

Finally, we prove the following conservation result. 






\begin{prop}
$\PFo$ is a conservative extension of $\SF$. 
\end{prop}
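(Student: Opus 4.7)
The plan is to combine Theorem \ref{compl_PFo1} with an observation that $\PF$ itself is conservative over $\SF$ for $\LF$-formulas. Let $A$ be an $\LF$-formula and suppose $\PFo \vdash A$. Since $A$ involves only the operator $\BF$, no element of $\Sub(A)$ has the form $\BP B$, and therefore $\Phi(A) = \emptyset$ and $\bigwedge \Phi(A)$ is (vacuously) $\top$. Applying Theorem \ref{compl_PFo1} yields $\PF \vdash A$. It thus suffices to prove that $\PF$ is a conservative extension of $\SF$ in the language $\LF$.

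I would prove this conservativity semantically via the contrapositive. Assume $\SF \nvdash A$. By Theorem \ref{ffp_SF}, there exist a finite rooted $\SF$-model $(W, \accf, \Vdash)$ and a world $w \in W$ such that $w \nVdash A$. Define a new Kripke model $\mc M = (W, \accp, \accf, \Vdash')$ by setting $\accp := \emptyset$ and taking $\Vdash'$ to agree with $\Vdash$ on propositional variables, extended to $\LPF$ by the usual satisfaction clauses. Then $(W, \accp, \accf)$ is a $\PF$-frame: $\accp = \emptyset$ is trivially transitive and conversely well-founded, $\accf$ is reflexive, transitive, and upward directed by assumption, and all three interaction conditions in Theorem \ref{fc} are vacuously satisfied. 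Since $A$ contains no occurrence of $\BP$, a straightforward induction on the construction of $A$ shows that its satisfaction at any world of $\mc M$ depends only on $\accf$ and on $\Vdash'$ restricted to propositional variables, and hence coincides with satisfaction in the original $\SF$-model. In particular $w \nVdash' A$, so by the soundness half $(1 \Rightarrow 2)$ of Theorem \ref{ffp_PF} we conclude $\PF \nvdash A$, as desired.

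There is no serious obstacle here: the core point is simply that the empty $\accp$-relation trivially fulfils every $\PF$-frame condition, so any $\SF$-model lifts canonically to a $\PF$-model preserving the truth of $\LF$-formulas. The only thing to verify is the easy observation that $\Phi(A)$ vanishes whenever $A$ is an $\LF$-formula, which follows immediately from the definition of $\Phi$.
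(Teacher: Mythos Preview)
Your argument is correct, and in fact it is more streamlined than the paper's. Both proofs ultimately invoke Theorem \ref{compl_PFo1} to pass from $\PF \nvdash \bigwedge \Phi(A) \to A$ to $\PFo \nvdash A$, and both build a $\PF$-countermodel from a given $\SF$-countermodel of $A$. The difference lies in how that $\PF$-model is constructed. You observe at the outset that $\Phi(A) = \emptyset$ for $\LF$-formulas $A$, so that $\bigwedge \Phi(A) \to A$ collapses to $A$ and it suffices to show $\PF \nvdash A$; you then lift the $\SF$-model to a $\PF$-model by the trivial choice $\accp = \emptyset$. The paper instead duplicates the $\SF$-model, forming $W^* = W \cup \{a^* \mid a \in W\}$ with a nontrivial relation $\accp^* = \{(a, b^*) \mid a, b \in W\}$, and then verifies that $r \Vdash^* \BP B \to \BF B$ for every $\LF$-formula $B$ before concluding $r \Vdash^* \bigwedge \Phi(A) \land \neg A$. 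Since $\Phi(A)$ is empty anyway, that verification is vacuous, and the duplication machinery is not actually needed for the result at hand. Your route is the more economical one; the paper's construction appears to be a vestige of the more elaborate model-building used elsewhere (e.g.\ in the proof of Theorem \ref{compl_PFo1}), where a nontrivial $\accp$ really is required.
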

\begin{proof}
Let $A$ be any $\LF$-formula and suppose that $\SF \nvdash A$.
By Theorem \ref{ffp_SF}, we find a rooted $\SF$-model $(W, \accf, \Vdash)$ in which $A$ is not true in a root element $r \in W$. 
For each element $a \in W$, we prepare a new element $a^*$ as a copy of $a$. 
We define the Kripke model $\mc M^* = (W^*, \accp^*, \accf^*, \Vdash^*)$ as follows: 
\begin{itemize}
    \item $W^* = W \cup \{a^* \mid a \in W\}$, 
    \item $\accp^* = \{(a, b^*) \mid a, b \in W\}$, 
    \item $\accf^* = \accf \cup \{(a^*, b^*) \mid a \accf b\}$, 
    \item for $a \in W$, $a \Vdash^* p \iff a^* \Vdash^* p \iff a \Vdash p$. 
\end{itemize}
It is easily seen that $\mc M^*$ is a nice $\PF$-model. 
Then, it is shown by induction on the construction of $B$ that for any $\LF$-formula $B$ and $a \in W$, 
\[
    a \Vdash^* B \iff a^* \Vdash^* B \iff a \Vdash B. 
\]
Suppose $r \Vdash^* \BP B$ for an $\LF$-formula $B$. 
Then, for each $a \in W$, we have $r \accp^* a^*$. 
So $a^* \Vdash^* B$, and hence $a \Vdash^* B$. 
Therefore, $r \Vdash^* \BF B$. 
We have shown that $r \Vdash^* \BP B \to \BF B$. 
In particular, $r \Vdash \bigwedge \Phi(A) \land \neg A$. 
So, $\PF \nvdash \bigwedge \Phi(A) \to A$. 
By Theorem \ref{compl_PFo1}, we conclude $\PFo \nvdash A$. 
\end{proof}

Hence, $\PF$ is also a conservative extension of $\SF$. 

\section{Conjunctive normal form theorem}\label{App2}

In this appendix, we show that every $\LPF$-formula is equivalent to an $\LPF$-formula in a version of conjunctive normal form over $\PF$. 

\begin{defn}
We say that an $\LPF$-formula $A$ is in \textit{$\BP$-conjunctive normal form} ($\BP$-CNF) iff $A$ is a conjunction of $\LPF$-formulas of the form
\[
    \BP D_0 \lor \cdots \lor \BP D_{k-1} \lor \DP E \lor F, 
\]
where $F$ is an $\LF$-formula. 
\end{defn}

Before proving our conjunctive normal form theorem, we prepare the following proposition. 
Since it is easily proved, proof is left to the reader. 

\begin{prop}\label{prop_pcnf}
Let $A$ and $B$ be any $\LPF$-formulas. 
    \begin{enumerate}
        \item $\PF \vdash \BF (\BP A \lor B) \leftrightarrow (\BP A \lor \BF B)$, 
        \item $\PF \vdash \BF (\DP A \lor B) \leftrightarrow (\DP A \lor \BF B)$.  
    \end{enumerate}
\end{prop}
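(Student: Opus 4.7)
The plan is to observe that both parts are instances of a common pattern: whenever an $\LPF$-formula $C$ is \emph{rigid under $\BF$} in the sense that $\PF \vdash C \to \BF C$ and $\PF \vdash \neg C \to \BF \neg C$, the schema $\PF \vdash \BF (C \lor B) \leftrightarrow (C \lor \BF B)$ holds for every $B$. For part (1) I would take $C = \BP A$, and for part (2) I would take $C = \DP A$.

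First I would verify rigidity in both cases using the interaction axioms of $\PF$. For $C = \BP A$, the axiom $\BP A \to \BF \BP A$ is one half directly. For the other half, I apply the axiom $\DP A' \to \BF \DP A'$ with $A' = \neg A$, obtaining $\DP \neg A \to \BF \DP \neg A$; since $\DP \neg A$ is definitionally $\neg \BP \neg \neg A$ and $\PF$ proves $\BP \neg \neg A \leftrightarrow \BP A$ by necessitation, this yields $\neg \BP A \to \BF \neg \BP A$. For $C = \DP A$, the axiom $\DP A \to \BF \DP A$ gives one half, and applying $\BP A' \to \BF \BP A'$ with $A' = \neg A$ together with the definitional identity $\neg \DP A \equiv \BP \neg A$ gives the other half.

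Once rigidity is established, the schema follows by routine normal modal reasoning. For $(C \lor \BF B) \to \BF (C \lor B)$: from $C$, rigidity yields $\BF C$ and hence $\BF (C \lor B)$ by $\BF$-normality, while from $\BF B$ we get $\BF (C \lor B)$ directly by $\BF$-normality. For $\BF (C \lor B) \to (C \lor \BF B)$: reason propositionally under the hypothesis $\neg C$; rigidity gives $\BF \neg C$, and combining with $\BF (C \lor B)$ via $\BF$-normality yields $\BF B$, so $\neg C \to (\BF (C \lor B) \to \BF B)$, which is classically equivalent to the desired implication.

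There is no real obstacle here, since the argument only uses normality of $\BF$ together with the interaction axioms $\BP A \to \BF \BP A$ and $\DP A \to \BF \DP A$ of $\PF$, which together express precisely that each of $\BP A$ and $\DP A$, and hence also their negations, is absolute between a world and its $\accf$-successors. The proof is uniform in $A$ and $B$ and proceeds entirely within the axiomatic calculus; no appeal to Kripke semantics or to the finite frame property is needed.
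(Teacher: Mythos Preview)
Your proposal is correct and gives exactly the routine argument the paper has in mind: the paper itself leaves this proof to the reader, and your derivation---reducing both parts to the rigidity of $\BP A$ and $\DP A$ under $\BF$ via the interaction axioms $\BP A \to \BF \BP A$ and $\DP A \to \BF \DP A$, then applying $\BF$-normality---is the natural way to fill in the details.
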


Recall that $d(A)$ denotes the maximum number of nesting of $\BP$ in $A$ (see the proof of Theorem \ref{ffp_PF} for the definition.) 

\begin{thm}[$\BP$-CNF theorem]
For any $\LPF$-formula $A$, we can effectively find an $\LPF$-formula $A'$ such that $A'$ is in $\BP$-CNF and $d(A) = d(A')$. 
\end{thm}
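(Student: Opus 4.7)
The plan is to prove the theorem by structural induction on $A$, establishing that every $\LPF$-formula is $\PF$-provably equivalent to a $\BP$-CNF of the same modal degree. Since a $\BP$-CNF is a conjunction of $\BP$-clauses, the entire argument reduces to two closure facts about single clauses: (i) the disjunction of two $\BP$-clauses is again a $\BP$-clause, and (ii) $\BF$ applied to a $\BP$-clause is again a $\BP$-clause.

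Fact (i) uses the $\GL$-theorem $\DP E \lor \DP E' \leftrightarrow \DP(E \lor E')$ to merge the two $\DP$-summands into a single one, while the $\BP$-disjuncts are collected together and the two $\LF$-summands combine into a single $\LF$-summand $F \lor F'$. Fact (ii) is obtained by iterating Proposition \ref{prop_pcnf}, pulling each $\BP D_i$-disjunct and the $\DP E$-disjunct out from under $\BF$:
\[
    \BF\bigl(\BP D_0 \lor \cdots \lor \BP D_{k-1} \lor \DP E \lor F\bigr) \equiv \BP D_0 \lor \cdots \lor \BP D_{k-1} \lor \DP E \lor \BF F,
\]
and $\BF F$ remains an $\LF$-formula since $F$ is. Both transformations leave the modal degree unchanged.

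With (i) and (ii) in hand, the induction is routine. An $\LF$-formula (including each propositional variable) is itself a trivial $\BP$-clause with only the $F$-part present. Conjunction of $\BP$-CNFs is mere concatenation; disjunction is distributed and reduced to the pairwise case via (i). The case $\BP A$ uses $\BP \bigwedge_i C_i \equiv \bigwedge_i \BP C_i$, where each $\BP C_i$ is a trivial single-$\BP$-disjunct clause, and the case $\BF A$ uses $\BF \bigwedge_i C_i \equiv \bigwedge_i \BF C_i$ together with (ii). The delicate case is negation: pushing $\neg$ through a single clause via $\neg \BP D \equiv \DP \neg D$ and $\neg \DP E \equiv \BP \neg E$ turns $\neg C_i$ into a conjunction of elementary $\BP$-clauses; negating a full $\BP$-CNF then yields a disjunction of such conjunctions, which is redistributed into $\BP$-CNF form by iterated application of (i). The main technical point to watch—and the step I expect to be the principal obstacle—is verifying that the distribution in the negation case does not inflate the modal degree; this follows because both the clause-merging of (i) and the local rewrites $\neg \BP D \mapsto \DP \neg D$, $\neg \DP E \mapsto \BP \neg E$ leave $d$ unchanged, while the $\BP A$ case shifts $d$ by exactly one in step with $d(\BP A) = d(A) + 1$, so the equality $d(A) = d(A')$ is maintained throughout the construction.
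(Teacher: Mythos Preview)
Your proposal is correct and follows essentially the same inductive strategy as the paper, with the same key use of Proposition~\ref{prop_pcnf} for the $\BF$ case. The one noteworthy difference is that the paper treats $\BP B$ as a base case---$\BP B$ is itself already a single-disjunct $\BP$-clause, so no recursion into $B$ is needed---whereas you recurse into $B$ and distribute $\BP$ over the resulting conjunction; both are valid and degree-preserving, but the paper's shortcut is slightly more economical, and your explicit treatment of the propositional connectives (particularly $\neg$ and $\lor$) spells out what the paper dismisses as ``easy calculations of propositional logic.''
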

\begin{proof}
This theorem is proved by induction on the construction of $A$. 
If $A$ is a propositional variable or of the form $\BP B$, then the theorem trivially holds. 
The cases for propositional connectives are proved by easy calculations of propositional logic. 
So, we only give a proof of the case that $A$ is of the form $\BF B$ and the theorem holds for $B$. 
By the induction hypothesis, we find an $\LPF$-formula $B'$ such that $\PF \vdash B \leftrightarrow B'$, $d(B) = d(B')$ and $B'$ is of the form $C_0 \land \cdots \land C_l$. 
Here, each $C_i$ is of the form
\[
    \BP D_0^i \lor \cdots \lor \BP D_{k_i-1}^i \lor \DP E^i \lor F^i, 
\]
where $F^i$ is an $\LF$-formula. 
In this case, we have $\PF \vdash A \leftrightarrow \BF C_0 \land \cdots \land \BF C_l$. 
For each $i$, let $C_i'$ be the $\LPF$-formula
\[
    \BP D_0^i \lor \cdots \lor \BP D_{k_i-1}^i \lor \DP E^i \lor \BF F^i. 
\]
By applying Proposition \ref{prop_pcnf} repeatedly, we have $\PF \vdash \BF C_i \leftrightarrow C_i'$. 
Let $A'$ be the formula $C_0' \land \cdots \land C_l'$. 
Then, we obtain that  $A'$ is in $\BP$-CNF, $\PF \vdash A \leftrightarrow A'$, and $d(A) = d(A')$. 
\end{proof}

\end{document}